\patchcmd{\@settitle}{\uppercasenonmath\@title}{}{}{}
\patchcmd{\@setauthors}{\MakeUppercase}{}{}{}
\patchcmd{\section}{\scshape}{}{}{}
\theoremstyle{plain}
 \newtheorem{theorem}{Theorem}[section]
 \newtheorem{lemma}[theorem]{Lemma}
 \newtheorem{corollary}[theorem]{Corollary}
 \newtheorem{proposition}[theorem]{Proposition}
\theoremstyle{definition}
 \newtheorem{definition}[theorem]{Definition}
 \newtheorem{condition}[theorem]{Condition}
\theoremstyle{remark}
 \newtheorem{remark}[theorem]{Remark}
 \newtheorem*{ack}{Acknowledgment}
\def\KL{\mathop{D_\mathrm{KL}}\nolimits}
\def\tr{\mathop{\mathrm{tr}}\nolimits}
\numberwithin{equation}{section}
\begin{document}
\title[]{\large Divergence and Deformed Exponential Family}
\author[]{Hiroshi Matsuzoe $^{\flat}$}
\author[]{Asuka Takatsu$^{\dagger}$}
\keywords{Information geometry, divergence, deformed exponential family, law of large numbers.}
\subjclass[2020]{53B12, 62B11}
\thanks{$\flat$
Department of Computer Science, Nagoya Institute of Technology, Nagoya 466-8555, Japan\\
\qquad{\sf matsuzoe@nitech.ac.jp}}
\thanks{$\dagger$
Graduate School of Mathematical Sciences, The University of Tokyo,
3-8-1 Komaba, Meguro-ku, Tokyo 153-8914, Japan;
 RIKEN Center for Advanced Intelligence Project (AIP), 
Nihonbashi 1-chome Mitsui Building, 15th floor,
1-4-1 Nihonbashi, Chuo-ku, Tokyo 103-0027, Japan\\
\qquad{\sf asuka-takatsu@g.ecc.u-tokyo.ac.jp}}
\maketitle
\vspace{-15pt}
\begin{abstract}
The Kullback--Leibler divergence together with exponential families establishes the foundation of information geometry and is widely generalized.
Among the generalization,   we focus on the $(h,\tau)$-divergence and $(h,\tau)$-exponential families.
We present a sufficient condition for the $(h,\tau)$-divergence to induce a Hessian structure on an $(h,\tau)$-exponential family.
We also define the $(h,\tau)$-dependence of random variables and prove a kind of the law of large numbers.
\end{abstract}
\tableofcontents
\vspace{-30pt}
\section{Introduction}\label{intro}
For a Radon measure~$\mu$ on a topological space ${\mathcal{X}}$,
set 
\begin{align*}
\mathcal{P}( \mu)
&\coloneqq \{p\in L^1(\mu) \mid p>0 \text{ $\mu$-a.e. and } \|p\|_{L^1(\mu)}=1 \}.
\end{align*}
The \emph{Kullback--Leibler divergence} $\KL\colon \mathcal{P}(\mu) \times \mathcal{P}(\mu)\to [0,\infty]$ is defined by 
\begin{align*}
\KL(p,p')\coloneqq \int_{{\mathcal{X}}} p(x) \log \frac{p(x)}{p'(x)}\, d\mu(x),
\end{align*}
which is well-defined due to the following nonnegativity 
\[
t \log \frac{t}{s}-(t-s)
=t\log t -s\log s- (t-s) (\log s+1) \geq 0
\quad \text{for }s,t>0. 
\]
The inequality also ensures that $\KL(p,p')= 0$ holds  if and only if $p=p'$ $\mu$-a.e.
Thus the Kullback--Leibler divergence expresses the difference between two positive probability densities.
The Kullback--Leibler divergence has enormous amount of generalizations with different roots.
%
In information geometry, a generalization of the Kullback--Leibler divergence is called a \emph{divergence} or \emph{contrast function}.
%
\begin{definition}
A \emph{divergence} $D$ on a set $\mathcal{M}$ is a function on $\mathcal{M} \times \mathcal{M}$ valued in $[0,\infty]$ such that $D(p,p')\geq 0$ with equality  if and only if $p=p'$.
A \emph{contrast function} is a smooth divergence on a smooth manifold valued in $[0,\infty)$.
\end{definition}
The Kullback--Leibler divergence is indeed a divergence on $ \mathcal{P}(\mu)$.
If ${\mathcal{X}}$ is a finite set and $\mu$ is the counting measure on ${\mathcal{X}}$, 
then $\mathcal{P}(\mu)$ is identified with an open simplex and 
the Kullback--Leibler divergence becomes a contrast function on $\mathcal{P}(\mu)$.
A manifold in $\mathcal{P}(\mu)$ is often referred to as a \emph{statistical model}, which  is well-established in statistics.
However, we impose additional regularity conditions  on a statistical model (see Definition~\ref{compati}).

In information geometry, the relation between a contrast function and a Hessian structure is emphasized.
A \emph{Hessian structure} on a manifold $\mathcal{M}$ is a pair of a Riemannian metric $g$ and a flat connection $\nabla$
such that  $g=\nabla d \Psi$ holds locally for some  smooth function $\Psi$ on $\mathcal{M}$.
In this case, $\Psi$ is called a \emph{local potential}.
The Hessian structure  induces a local contrast function, called the \emph{canonical divergence}  (see Proposition~\ref{cano}).
The canonical divergence $D$ satisfies the following condition.
\begin{condition}\label{Eguchi}
For $V\in \mathfrak{X}(\mathcal{M})$,
set $\widetilde{V}\coloneqq (V,0),\widehat{V}\coloneqq (0,V)\in \mathfrak{X}(\mathcal{M}\times \mathcal{M})$.
Then 
\[
-\widetilde{V}_{(p,p)}(\widehat{V}D)>0\quad\text{for $V\in \mathfrak{X}(\mathcal{M})$ and $p\in \mathcal{M}$ with $V_p\neq0$.}
\]
\end{condition}
Conversely, if a contrast function $D$ on a manifold $\mathcal{M}$ satisfies Condition~\ref{Eguchi},
then two maps $g\colon \mathfrak{X}(\mathcal{M}) \times \mathfrak{X}(\mathcal{M}) \to C^\infty(\mathcal{M})$,
$\nabla\colon\mathfrak{X}(\mathcal{M}) \times \mathfrak{X}(\mathcal{M}) \to \mathfrak{X}(\mathcal{M})$
defined by 
\begin{equation}
\label{met}
g_p(V,W)\coloneqq -\widetilde{V}_{(p,p)} (\widehat{W}D),\quad
g_p(\nabla_ZV, W)\coloneqq -\widehat{Z}_{(p,p)} (\widehat{V}\widetilde{W} D),
\end{equation}
become a Riemannian metric and a connection, respectively 
(see for example \cite{Eguchi92}*{p.633}).
However, the pair $(g,\nabla)$ is not necessarily a Hessian structure on $\mathcal{M}$.

In this paper, we focus on the $(h,\tau)$-divergence and $(h,\tau)$-exponential families among contrast functions and statistical models and study its mathematical structure.
Here $\tau$ is a smooch function on an open interval $I$ contained in $(0,\infty)$
and $h$ is a smooth function on $\tau(I)$ such that $\tau'>0$ on $I$ and $h''>0$ on $\tau(I)$.
We denote by $\mathcal{G}$ the set of such triples~$(h,\tau, I)$.

For $(h,\tau,I)\in \mathcal{G}$, define 
\[
d_{h,\tau}(t,s)\coloneqq h(\tau(t))-h(\tau(s))- \left(\tau(t)-\tau(s)\right) h'(\tau(s)) 
\quad \text{for }s,t\in I,
\]
which is nonnegative by the positivity of  $h''$ on $\tau(I)$.
We set
\begin{align*}
\mathcal{P}_I(\mu)&\coloneqq \left\{ p\in \mathcal{P}(\mu) \bigm| \text{$p\in I\ \mu$-a.e.}\right\}
\end{align*}
and define the \emph{$(h,\tau)$-divergence} $D_{h,\tau}\colon \mathcal{P}_I(\mu)\times \mathcal{P}_I(\mu) \to [0,\infty]$ by 
\begin{equation}\label{div1}
D_{h,\tau}(p,p')\coloneqq \int_{{\mathcal{X}}} d_{h,\tau}(p(x), p'(x)) d\mu(x).
\end{equation}
A notion of $(h, \tau)$-divergence is originally introduced in~\cite{Zhang}*{Proposition~6} and 
the expression~\eqref{div1} was provided in~\cite{NZ}*{Proposition~1}.
Under  an appropriate condition such as the compactness of $\mathcal{X}$,
the $(h,\tau)$-divergence on a statistical model in $\mathcal{P}_I(\mu)$ induces a Riemannian metric $g^{h,\tau}$ and a connection $\nabla^{h,\tau}$ via~\eqref{met}
(see Lemma~\ref{metcon}).

For $(h,\tau, I) \in \mathcal{G}$,
a statistical model $\mathcal{M}$ in $\mathcal{P}(\mu)$ is an \emph{$(h,\tau)$-exponential family}
if, for each point in $\mathcal{M}$, there exist a coordinate system $(U,\theta)$ around the point, 
a continuous map $T\colon \mathcal{X}\to \mathbb{R}^{n}$, 
a continuous function $c$ on $\mathcal{X}$
and a smooth function $\psi$ on $\mathcal{M}$
 such that 
\[
h'(\tau(p(x)))=\langle \theta(p), T(x) \rangle-c(x)-\psi(p) 
\quad 
\text{for }p\in U \text{ and }x\in{\mathcal{X}}.
\]
We call $(U,\theta, T, c,\psi)$ an \emph{$(h,\tau)$-exponential coordinate representation} 
and $\psi$ a local \emph{$(h,\tau)$-exponential normalization}, respectively
 (see Definition~\ref{exp}).
The terminology ``$(h,\tau)$-exponential" comes from the fact that the  extended inverse function of $h' \circ \tau$
is denoted by $\exp_{h,\tau}$ and called the \emph{$(h,\tau)$-exponential function}.
Note that  $\psi$ is a normalization hence it is determined by the other ingredients $(\mathcal{M},\theta, T, c)$.
An $(h,\tau)$-exponential family has a unique global $(h,\tau)$-exponential coordinate representation up to affine transformation 
(see Propositions~\ref{minimal} and~\ref{global}).
%
For example, the $(h,\tau)$-divergence on $\mathcal{P}_I(\mu)$ with the choice
\begin{equation*}
h(r)\coloneqq  r\log r,  \qquad 
\tau(t)\coloneqq t, \qquad  
I\subset (0,\infty)
\end{equation*}
coincides with the Kullback--Leibler divergence on $\mathcal{P}_I(\mu)$, where 
\[
\exp_{h,\tau}(u)=e^{u-1}\quad \text{for }u\in \mathbb{R}. 
\]
In this case, the terminology $(h,\tau)$-exponential is abbreviated to merely \emph{exponential}.

Our first result is to give a sufficient condition for $( g^{h,\tau},\nabla^{h,\tau})$ being a Hessian structure on an $(h,\tau)$-exponential family.
Note that if  $\mathcal{X}$ is compact, 
then, for $\phi\in C^\infty(I)$, 
\[
\mathbb{I}_{\phi}(p)\coloneqq \int_{\mathcal{X}} \phi(p(x)) d\mu(x)
\]
determines a smooth function on an $(h,\tau)$-exponential family (see Remark~\ref{remm}\eqref{remm2}).
\begin{theorem}\label{mainthm2}
Let $(h, \tau, I)\in \mathcal{G}$ and $\mathcal{M}$ be an $(h,\tau)$-exponential family in~$\mathcal{P}_I(\mu)$.
If  $\mathcal{X}$ is compact and~$\mathbb{I}_\tau$ is constant on $\mathcal{M}$, 
then $( g^{h,\tau},\nabla^{h,\tau})$ is a Hessian structure on $\mathcal{M}$ 
and its  global potential is $-\mathbb{I}_{s^\star_{h,\tau}}+\psi\mathbb{I}_\tau$, 
where
$\psi$ is a global $(h,\tau)$-exponential normalization of $\mathcal{M}$ and 
 $s^{\star}_{h,\tau}\colon I\to \mathbb{R}$ is defined~by 
\[
s^{\star}_{h,\tau}\coloneqq-\tau \cdot (h'\circ \tau)+(h\circ \tau).
\]
\end{theorem}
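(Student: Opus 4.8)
The plan is to work in a global $(h,\tau)$-exponential coordinate representation $(\mathcal{M},\theta,T,c,\psi)$, which exists by Propositions~\ref{minimal} and~\ref{global}; thus $\theta$ is a global coordinate system on $\mathcal{M}$, the given $\psi$ is the corresponding global $(h,\tau)$-exponential normalization, and
\[
h'(\tau(p(x)))=\langle\theta(p),T(x)\rangle-c(x)-\psi(p)\qquad\text{for all }p\in\mathcal{M},\ x\in\mathcal{X}.
\]
By Lemma~\ref{metcon} the compactness of $\mathcal{X}$ makes $g^{h,\tau},\nabla^{h,\tau}$ a well-defined Riemannian metric and connection; by Remark~\ref{remm} it also makes $\mathbb{I}_\tau$, $\mathbb{I}_{s^\star_{h,\tau}}$ and the ``expectation coordinates'' $\eta_i(p)\coloneqq\int_{\mathcal{X}}T_i(x)\tau(p(x))\,d\mu(x)$ smooth on $\mathcal{M}$ and allows differentiation under the integral sign (the integrands $d_{h,\tau}(p(x),p'(x))$ defining $D_{h,\tau}$ are treated the same way). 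Write $\partial_i\coloneqq\partial/\partial\theta^i$. Differentiating the displayed relation in $\theta^i$ gives the key identity
\[
h''(\tau(p(x)))\,\partial_i[\tau(p(x))]=T_i(x)-\partial_i\psi,
\]
while the hypothesis that $\mathbb{I}_\tau$ is constant gives $\int_{\mathcal{X}}\partial_i[\tau(p(x))]\,d\mu(x)=\partial_i\mathbb{I}_\tau=0$. These two facts do all the work.

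First I would compute $g^{h,\tau}$ and $\nabla^{h,\tau}$ from Eguchi's formulas~\eqref{met}. From $\partial_s d_{h,\tau}(t,s)=-(\tau(t)-\tau(s))\,h''(\tau(s))\,\tau'(s)$, differentiating $D_{h,\tau}$ once in the second variable, then in the first, and restricting to the diagonal gives
\[
g^{h,\tau}_p(\partial_i,\partial_j)=\int_{\mathcal{X}}h''(\tau(p(x)))\,\partial_i[\tau(p(x))]\,\partial_j[\tau(p(x))]\,d\mu(x),
\]
which is manifestly symmetric (and positive definite, as already guaranteed by Lemma~\ref{metcon}). From $\partial_t d_{h,\tau}(t,s)=\tau'(t)\,(h'(\tau(t))-h'(\tau(s)))$ the analogous computation for the connection gives
\[
g^{h,\tau}_p(\nabla^{h,\tau}_{\partial_i}\partial_j,\partial_k)=\int_{\mathcal{X}}\partial_i\!\big[h''(\tau(p(x)))\,\partial_j[\tau(p(x))]\big]\,\partial_k[\tau(p(x))]\,d\mu(x).
\]
By the key identity the bracket equals $\partial_i(T_j(x)-\partial_j\psi)=-\partial_i\partial_j\psi$, which does not depend on $x$, so the integral equals $-\partial_i\partial_j\psi\cdot\partial_k\mathbb{I}_\tau=0$. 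Since $g^{h,\tau}$ is nondegenerate, $\nabla^{h,\tau}_{\partial_i}\partial_j=0$; hence $\theta$ is a global affine coordinate system for $\nabla^{h,\tau}$, and in particular $\nabla^{h,\tau}$ is flat.

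Next I would verify that $\Phi\coloneqq-\mathbb{I}_{s^\star_{h,\tau}}+\psi\mathbb{I}_\tau$ (a constant multiple of $\psi$ plus $-\mathbb{I}_{s^\star_{h,\tau}}$, since $\mathbb{I}_\tau$ is constant) is a potential, i.e.\ that $\partial_i\partial_j\Phi=g^{h,\tau}_{ij}$ in the affine coordinates $\theta$. Using $s^\star_{h,\tau}=-\tau\cdot(h'\circ\tau)+(h\circ\tau)$ one computes $\partial_i[s^\star_{h,\tau}(p(x))]=-\tau(p(x))\,h''(\tau(p(x)))\,\partial_i[\tau(p(x))]$; combining this with the key identity and the constancy of $\mathbb{I}_\tau$, the $\partial_i\psi$-terms cancel and $\partial_i\mathbb{I}_{s^\star_{h,\tau}}=-\eta_i+\mathbb{I}_\tau\,\partial_i\psi$, whence $\partial_i\Phi=\eta_i$. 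Differentiating once more and substituting $T_i(x)=h''(\tau(p(x)))\,\partial_i[\tau(p(x))]+\partial_i\psi$,
\[
\partial_i\partial_j\Phi=\partial_j\eta_i=\int_{\mathcal{X}}T_i(x)\,\partial_j[\tau(p(x))]\,d\mu(x)=\int_{\mathcal{X}}h''(\tau(p(x)))\,\partial_i[\tau(p(x))]\,\partial_j[\tau(p(x))]\,d\mu(x)+\partial_i\psi\cdot\partial_j\mathbb{I}_\tau,
\]
and the last term vanishes, so $\partial_i\partial_j\Phi=g^{h,\tau}_{ij}$. Because $\theta$ is a global affine coordinate system for the flat connection $\nabla^{h,\tau}$, this is precisely $g^{h,\tau}=\nabla^{h,\tau}d\Phi$ on all of $\mathcal{M}$; hence $(g^{h,\tau},\nabla^{h,\tau})$ is a Hessian structure with global potential $\Phi=-\mathbb{I}_{s^\star_{h,\tau}}+\psi\mathbb{I}_\tau$. (Equivalently, one may first simplify $D_{h,\tau}(p,p')$ — using the displayed relation at both $p$ and $p'$ together with the constancy of $\mathbb{I}_\tau$ — into the Bregman form $\Phi(\theta(p'))-\Phi(\theta(p))-\langle\theta(p')-\theta(p),\eta(p)\rangle$, which exhibits $D_{h,\tau}$ as the canonical divergence of this Hessian structure.)

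All the displayed identities are routine once Eguchi's formula~\eqref{met} is unwound; the only genuine input is that ``$\mathbb{I}_\tau$ constant'' is the same as ``$\partial_i\mathbb{I}_\tau=0$'', and that this single fact is simultaneously responsible for the vanishing of the connection coefficients and for $\Phi$ being a potential. The points needing care — and the most likely source of friction — are the repeated appeals to Lemma~\ref{metcon} and Remark~\ref{remm} for integrability, smoothness and differentiation under the integral sign of the various $\mu$-integrals, and keeping straight which factor of $\mathcal{M}\times\mathcal{M}$ each tilde/hat acts on when applying~\eqref{met}.
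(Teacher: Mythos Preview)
Your proposal is correct and follows essentially the same route as the paper (which factors the argument through the more general Theorem~\ref{mainthm22}): show the Christoffel symbols vanish in the $\theta$-coordinates via $\partial_k\mathbb{I}_\tau=0$, then verify $\partial_i\partial_j\Phi=g^{h,\tau}_{ij}$ for $\Phi=-\mathbb{I}_{s^\star_{h,\tau}}+\psi\,\mathbb{I}_\tau$. One small correction of citation: the implication ``$\mathcal{X}$ compact $\Rightarrow$ $g^{h,\tau},\nabla^{h,\tau}$ well-defined'' passes through Lemma~\ref{lemm} (compactness $\Rightarrow$ $(h,\tau)$-compatibility) before Lemma~\ref{metcon}; otherwise your computations match the paper's, your Christoffel step being in fact slightly cleaner since you invoke the key identity directly rather than expanding through $m_{h,\tau},\gamma_{h,\tau}$ and the cancellation $\gamma_{h,\tau}+(\chi'_{h,\tau}/\chi_{h,\tau})\,m_{h,\tau}=0$.
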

The compactness of ${\mathcal{X}}$ is removed by requiring a certain regularity 
(see Theorem~\ref{mainthm22}). 

%
An $(h,\tau)$-exponential family is characterized as the set of constrained minimizers
by the Pythagorean relation for the $(h,\tau)$-divergence (see Proposition~\ref{maxcor}).
Although the Pythagorean relation on a statistical model is often referred to as a by-product of a Hessian structure (see for example~\cite{Amari}*{Chapter~1.6.1}),
this relies only on the existence of a global coordinate system and is extended to a suitable subset of~$\mathcal{P}(\mu)$.
For example, the extension of the Pythagorean relation for  the Kullback--Leibler divergence is found in ~\cite{CNN}*{Theorem~22.1},
which is used in a dual problem of the maximum likelihood estimation on an exponential family.

The \emph{maximum likelihood estimation} is a method to 
infer the density $p_\ast\in \mathcal{P}(\mu)$ of law of an unknown ${\mathcal{X}}$-valued random variable from given data $\bm{x}_\ast \in \mathcal{X}^k$, where $k\in \mathbb{N}$.
Let $\mu^{\otimes k}$ be the $k$-fold product measure of $\mu$.
For  $p\in \mathcal{P}(\mu)$,
we define $p^{\otimes k}\in \mathcal{P}(\mu^{\otimes k})$ by 
\[
p^{\otimes k}(\bm{x})\coloneqq
\prod_{m=1}^k p(\pi_m^k( \bm{x})) 
\quad \text{for $\bm{x}\in {\mathcal{X}}^k$},
\]
where $\pi_m^k\colon \mathcal{X}^k \to \mathcal{X}$ denotes the projection onto the $m$th coordinate.
Assume that  $p_\ast$ lies in  $\mathcal{M}\subset \mathcal{P}(\mu)$.
Then 
$p_\ast$  is estimated as a maximizer of the likelihood function given by
\[
p\mapsto e_{{\bm x_\ast}}(p^{\otimes k})
\quad \text{on }\mathcal{M}.
\]
In addition, if  $\mathcal{M}$ is an exponential family equipped with an exponential coordinate representation
$(\mathcal{M},\theta,T,c, \psi)$,
then $p_\ast$ becomes  a unique minimizer  of the dual problem 
\[
p\mapsto 
 \KL (\rho, p^{\otimes k}) \quad \text{on }\mathcal{M}
\]
for a suitable $\rho\in \mathcal{P}(\mu^{\otimes k})$,
where we used the fact that 
$\{p^{\otimes k} \mid p\in \mathcal{M}\}$ becomes an exponential family in $\mathcal{P}(\mu^{\otimes k})$
due to the expression 
\begin{align}\label{expindep}
p^{\otimes k}(\bm{x})
=\exp\left( \sum_{m=1}^k \left( \langle \theta(p), T(\pi_m^k( \bm{x}))\rangle-c(\pi_m^k( \bm{x})) \right) -k\psi(p) \right)
\end{align}
for $p\in \mathcal{M}$ and ${\bm x}\in \mathcal{X}^k$
(see Section~\ref{proof3} for more detail).
Moreover,
since the map defined~by
\[
p\mapsto \int_{\mathcal{X}} T(x)p(x)d\mu(x)\quad\text{on }\mathcal{M} 
\]
induces a global coordinate system on $\mathcal{M}$
(see for example \cite{BN1970}*{Theorem~5.5}),
$p_\ast \in \mathcal{M}$ is also deduced from an infinite independent trials by  the strong law of large numbers,
that is,  
\begin{align*}
\frac1k  \sum_{m=1}^k T(X_m)\xrightarrow{k\to \infty} \int_{\mathcal{X}}T(x)p(x)d\mu(x)
\quad\text{$\mathbb{P}$-a.s.}
\end{align*}
for $p\in \mathcal{M}$ and  i.i.d.\,random variables $(X_k)_{k\in\mathbb{N}}$ with distribution $p\mu$. 

Motivated by the fact that the law of i.i.d random variables with distribution in
an exponential family has the expression~\eqref{expindep}, 
we define the $(h,\tau)$-dependent 
 and  identically distributed random variables according to an element in an $(h,\tau)$-exponential family.
\begin{definition}\label{repeat}
Let $(h,\tau,I)\in \mathcal{G}$. 
An $(h,\tau)$-exponential family $\mathcal{M}$ is \emph{identically repeatable}
if, for each $k\in \mathbb{N}$,
there exist a positive and increasing smooth function $\alpha_k$ on $I$, a positive smooth function $b_k$ on $\mathcal{M}$ and an injection $\iota_k \colon \mathcal{M}\to \mathcal{P}_I(\mu^{\otimes k})$ 
such that $\iota_k(\mathcal{M})$ is a statistical model in $\mathcal{P}(\mu^{\otimes k})$ and,
for each global $(h,\tau)$-exponential coordinate representation $(\mathcal{M},\theta,T,c,\psi)$
on $\mathcal{M}$,
there exists $\psi_k\in C^{\infty}(\mathcal{M})$ such that  
\[
\iota_k(p)({\bm x})
=\alpha_k
\left(
\exp_{h,\tau}\left(  b_k(p) \sum_{m=1}^k \left( \langle \theta(p), T(\pi_m^k( \bm{x}))\rangle-c(\pi_m^k( \bm{x})) \right) -\psi_k(p) \right)
\right)
\]
for $p\in \mathcal{M}$ and  ${\bm x} \in {\mathcal{X}}^k$, and furthermore
\begin{align}\label{MC}
\int_{\mathcal{X}^{k+k'}}\iota_{k+k'}(p)({\bm x},{\bm y})d\mu^{\otimes k'}(\bm{ y})
=\iota_k(p)({\bm x})
\quad \text{for }k'\in \mathbb{N}.
\end{align}
We call the injection $(\iota_k)_{k\in \mathbb{N}}$ the \emph{repetition}.

For an identically repeatable $(h,\tau)$-exponential family $\mathcal{M}$ equipped with a repetition $(\iota_k)_{k\in\mathbb{N}}$
and $p\in \mathcal{M}$, 
we say random variables $(X_k)_{k\in \mathbb{N}}$ 
are \emph{$(h,\tau)$-dependent and identically distributed according to $p$}
if the law of $(X_m)_{m=1}^k$ is $\iota_k(p) \mu^{\otimes k}$ for each $k\in \mathbb{N}$.
\end{definition}

Since the global $(h,\tau)$-exponential normalization $\psi$ is determined by the other ingredients $(\mathcal{M},\theta, T, c)$,
we use  these ingredients to define the identical repeatability.
The function $\alpha_k$ is used to satisfy  the marginal condition~\eqref{MC},
where its monotonicity ensures that the relative order of event probabilities remains unchanged.
The function $b_k$ is a kind of  normalization associated to $\alpha_k$.
Note that any exponential family can be identically repeatable, where $\alpha_k=\mathrm{id}_I, b_k= \mathbf{1}_{\mathcal{M}}$ and $\psi_k=k\psi$
(see~\eqref{expindep}).

To establish a kind of the law of large number for  
$(h,\tau)$-dependent and identically distributed random variables, 
for $q>1$, define $h_q\colon (0,\infty)\to \mathbb{R}$~by
\[
h_q(r)\coloneqq\int_{1}^r \frac{t^{1-q}-1}{1-q}dt.
\]
We easily see $(h_q,\mathrm{id}_{(0,\infty)},(0,\infty))\in \mathcal{G}$.
\begin{theorem}~\label{mainthm3}
Let $d\in \mathbb{N}$ and $q>1$ with $d(q-1)<2$.
For $v\in \mathbb{R}^d$, define $p^v\colon\mathbb{R}^d\to \mathbb{R}$~by
\[
 p^v(x)\coloneqq\left[(q-1)|x-v|^2+ \left(\sqrt{\frac{q-1}{\pi}} \frac{\Gamma(\frac{1}{q-1})}{\Gamma(\frac{3-q}{2(q-1)})}\right)^{\frac{2(1-q)}{3-q}}\right]^{\frac{1}{1-q}}.
\]
Then the set $\mathcal{M}\coloneqq \{p^v\mid v\in \mathbb{R}^d \}$ becomes an identically repeatable $(h_q,\mathrm{id}_{(0,\infty)})$-exponential  family.
For its repetition $(\iota_k)_{k\in\mathbb{N}}$ and $p\in \mathcal{M}$, 
an $(h_q,\mathrm{id}_{(0,\infty)})$-dependent and identically distributed random variables $(X_k)_{k\in \mathbb{N}}$ according to $p$ satisfies 
\begin{align}\label{lln}
\frac1k  \sum_{m=1}^k T(X_m)\xrightarrow{k\to \infty} \int_{\mathbb{R}^d} T(x)\iota_1(p)(x)dx
\quad\text{$\mathbb{P}$-a.s.},
\end{align}
where $(\mathcal{M},\theta, T, c,\psi)$ is an 
$(h_q,\mathrm{id}_{(0,\infty)})$-exponential coordinate representation
on $\mathcal{M}$.
\end{theorem}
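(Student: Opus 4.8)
I would prove this in three steps: make $\mathcal M$ an explicit $(h_q,\mathrm{id}_{(0,\infty)})$-exponential family, realise its repetition as a tower of multivariate Student-$t$ (equivalently multivariate $q$-Gaussian) laws, and then read off~\eqref{lln} from the normal variance-mixture representation of those laws. \emph{Step 1 (exponential structure).} Write $K$ for the bracketed $\Gamma$-constant appearing in the definition of $p^v$, so that $p^v(x)^{1-q}=(q-1)|x-v|^2+K$. Since $h_q'(r)=(r^{1-q}-1)/(1-q)$, this immediately gives
\[
h_q'\bigl(p^v(x)\bigr)=\langle 2v,x\rangle-|x|^2-\psi(p^v),\qquad \psi(p^v):=|v|^2+\tfrac{K-1}{q-1},
\]
so $(\mathcal M,\theta,T,c,\psi)$ with $\theta(p^v):=2v$, $T(x):=x$, $c(x):=|x|^2$ is a global $(h_q,\mathrm{id}_{(0,\infty)})$-exponential coordinate representation; in particular $\dim\mathcal M=d$. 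By Propositions~\ref{minimal} and~\ref{global} every other representation differs from this one by an affine change of $\theta$ and the induced change of $T$ and $c$, and since for \emph{any} representation $\sum_{m=1}^k\bigl(\langle\theta(p),T(\pi_m^k(x))\rangle-c(\pi_m^k(x))\bigr)$ equals $\sum_{m=1}^k h_q'(p(\pi_m^k(x)))+k\psi(p)$, this sum changes only by a function of $p$ when the representation is changed; moreover $T$ is necessarily affine in $x$ (with invertible linear part), because $\theta$ is a coordinate system on the $d$-dimensional manifold $\mathcal M$.

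\emph{Step 2 (the repetition).} Fix an exponent $\delta_1>\tfrac{d+1}{2}$, for instance $\delta_1:=\tfrac1{q-1}+\tfrac d2$, which exceeds $\tfrac{d+1}{2}$ because $q<3$ (forced by $d(q-1)<2$); put $\delta_k:=\delta_1+\tfrac{(k-1)d}{2}$ and define $\iota_k(p^v)$ to be the probability density on $\mathbb R^{dk}$ proportional to $\bigl[1+(q-1)\sum_{m=1}^k|x_m-v|^2\bigr]^{-\delta_k}$. This is a genuine positive density because $\delta_k-\tfrac{dk}{2}=\delta_1-\tfrac d2>0$; each $\iota_k$ is injective since the location $v$ is recovered from $\iota_k(p^v)$; and $\iota_k(\mathcal M)$ is a statistical model, the only non-formal point being finiteness of the Fisher information, which holds since $\delta_1>\tfrac d2$. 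The marginal condition~\eqref{MC} follows by iterating the elementary identity $\int_{\mathbb R^d}(A+(q-1)|y|^2)^{-\delta}\,dy=A^{d/2-\delta}(q-1)^{-d/2}\pi^{d/2}\Gamma(\delta-\tfrac d2)/\Gamma(\delta)$ together with $\delta_{k+1}-\tfrac d2=\delta_k$, and this also pins down the normalising constants. Finally set $b_k:=\mathbf 1_{\mathcal M}$, $\alpha_k(r):=\lambda_k r^{\gamma_k}$ with $\gamma_k:=(q-1)\delta_k>0$ (so $\alpha_k$ is positive, increasing and smooth on $(0,\infty)$), $\psi_k(p^v):=k|v|^2$ and $\lambda_k$ the forced normalisation; using $\sum_m(\langle 2v,x_m\rangle-|x_m|^2)=-\sum_m|x_m-v|^2+k|v|^2$ and $\exp_{h_q,\mathrm{id}_{(0,\infty)}}(u)=[1+(1-q)u]^{1/(1-q)}$, a substitution yields exactly the density formula of Definition~\ref{repeat}; for a general coordinate representation one absorbs the $p$-dependent discrepancy from Step~1 into $\psi_k$. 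Hence $\mathcal M$ is identically repeatable.

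\emph{Step 3 (the law of large numbers).} By~\eqref{MC} the family $(\iota_k(p^v)\mu^{\otimes k})_k$ is consistent, so the law of the whole sequence $(X_k)_k$ is determined by these finite-dimensional marginals. Let $S$ be $\mathrm{Gamma}(\delta_1-\tfrac d2,1)$-distributed and let $(N_m)_{m\in\mathbb N}$ be i.i.d.\ standard $d$-dimensional Gaussians independent of $S$; a one-dimensional Gamma integral shows that for every $k$ the vector $\bigl(v+(2(q-1)S)^{-1/2}N_m\bigr)_{m=1}^k$ has density $\iota_k(p^v)$, the point being that the mixing shape parameter $\delta_k-\tfrac{dk}{2}=\delta_1-\tfrac d2$ does not depend on $k$, which is what makes the representation consistent. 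Hence $(X_k)_k$ has the same law as $\bigl(v+(2(q-1)S)^{-1/2}N_m\bigr)_m$, so $\tfrac1k\sum_{m=1}^kX_m=v+(2(q-1)S)^{-1/2}\tfrac1k\sum_{m=1}^kN_m\to v$ a.s.\ by the classical strong law of large numbers applied to the i.i.d.\ $N_m$ together with $S>0$ a.s. Since $T$ is continuous and affine, $\tfrac1k\sum_{m=1}^kT(X_m)=T\bigl(\tfrac1k\sum_{m=1}^kX_m\bigr)\to T(v)$ a.s., and $T(v)=\int_{\mathbb R^d}T(x)\iota_1(p^v)(x)\,dx$ because $v$ is the barycentre of $\iota_1(p^v)$ (the reflection symmetry $\iota_1(p^v)(2v-x)=\iota_1(p^v)(x)$ and the first moment existing thanks to $\delta_1>\tfrac{d+1}{2}$); this is~\eqref{lln}.

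\emph{Main obstacle.} The crux is Step~2: one must discover that the repetition should be a tower of multivariate Student-$t$ laws whose dimension grows with $k$ while the variance-mixing shape parameter $\delta_1-\tfrac d2$ stays frozen, and then carry out the normalisation bookkeeping so that both~\eqref{MC} and the rigid algebraic shape demanded by Definition~\ref{repeat} hold simultaneously; the exact value of $\delta_1$ is irrelevant for repeatability but must be taken above $\tfrac{d+1}{2}$ so that the limit in~\eqref{lln} is finite, which is why $\iota_1$ is not simply the inclusion. By comparison Step~3 is soft, its only subtlety being that the conclusion concerns the entire sequence, which forces one to identify the full law through the variance-mixture representation rather than argue marginal by marginal.
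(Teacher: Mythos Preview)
Your proof is correct and takes a genuinely different route from the paper's. The construction of the exponential coordinate system (your Step~1) matches the paper's exactly. For Step~2 you build a \emph{different} repetition than the paper does: you take $b_k\equiv 1$ and exponent $\delta_k=\tfrac{1}{q-1}+\tfrac{kd}{2}$, whereas the paper uses a nontrivial scaling $b_k(p)=\beta_k(I_d)$ and exponent $a_k/(q-1)=\tfrac{1}{q-1}+\tfrac{(k+3)d}{2}$; both satisfy Definition~\ref{repeat}, and since the theorem only claims the existence of \emph{some} repetition with the stated property, either choice suffices. The real divergence is Step~3. The paper proves the quantitative tail bound of Theorem~\ref{mainthm33}\eqref{792} via a fourth-moment Chebyshev inequality (exploiting that the centred coordinates $Y_m$ are pairwise uncorrelated by symmetry), obtains a summable estimate $O(k^{-2})$, and then invokes Borel--Cantelli. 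You instead exploit the normal variance-mixture representation of the multivariate Student-$t$ law: because the mixing shape parameter $\delta_k-\tfrac{dk}{2}=\delta_1-\tfrac{d}{2}$ is independent of $k$, a single Gamma variable $S$ serves for all $k$, so the full sequence $(X_k)$ is conditionally i.i.d.\ Gaussian given $S$ and the classical SLLN applies directly. Your argument is shorter and more structural, and needs only first moments of $\iota_1(p)$ (your check $\delta_1>\tfrac{d+1}{2}$) rather than fourth moments; the paper's approach, on the other hand, yields explicit non-asymptotic error bounds that your mixture argument does not provide.
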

We discuss another identically repeatable $(h_q,\mathrm{id}_{(0,\infty)})$-exponential family 
 with mean-like and covariance-like parameters in Theorem~\ref{mainthm33}.

The rest of this paper is organized as follows. 
In Section \ref{pre}, 
we give the definition of a statistical model and 
review the relation between a Hessian structure and  a contrast function.
Section~\ref{Div} is devoted to the study of the $(h,\tau)$-divergence.
In Section~\ref{DEF}, we give the definition of an $(h,\tau)$-exponential family and develop its theory,
in particular, we establish the Pythagorean relation in Section~\ref{6}.
We prove Theorem~\ref{mainthm2} in Section~~\ref{pfthm2}
and Theorem~\ref{mainthm3} in Section~\ref{proof3}, respectively.

\section{Preliminaries}\label{pre}
In this section, we fix our notation and give the definition of a statistical model.
Then we recall the notion of a Hessian structure and discuss its relation to a contrast function.
\subsection{Notation}
Throughout this paper, a manifold is assumed to be smooth, Hausdorff, second countable, connected and without boundary.
We denote by $\langle \cdot, \cdot \rangle$ the standard inner product on $\mathbb{R}^d$. 
For a set~$E$, we denote by ${\bf1}_E$ and~$\mathrm{id}_E$ the indicator function of~$E$ and the identify map on $E$, respectively.
Let $n,N,d\in \mathbb{N}$.

Unless we indicate otherwise, we shall always use $\mathcal{M}$ to denote an $n$-dimensional manifold.
For a coordinate system $(U, \xi=(\xi^i)_{i=1}^{n})$  on $\mathcal{M}$ and  $1\leq i\leq n$,  
we denote by $\partial_i$ the $i$th coordinate vector and set $V^i\coloneqq V(\xi^i)$ for $V\in \mathfrak{X}(\mathcal{M})$ as usual.
We also fix  a topological space~${\mathcal{X}}$ 
and a Radon measure $\mu$ on ${\mathcal{X}}$ such that the support of $\mu$ is ${\mathcal{X}}$.
For  an open interval $I$ contained in $(0,\infty)$, set 
\begin{align*}
\mathcal{P}_I( \mu)
\coloneqq \{p\in L^1(\mu) \mid p\in I \text{ $\mu$-a.e. and } \|p\|_{L^1(\mu)}=1 \}
\quad
\text{and}\quad
\mathcal{P}( \mu)\coloneqq \mathcal{P}_{(0,\infty)}( \mu).
\end{align*}

\begin{definition}
For $x\in \mathcal{X}$, 
the \emph{evaluation function} $e_x\colon \mathcal{P(\mu)}\to (0,\infty)$ is defined by 
\[
e_x(p)\coloneqq p(x).
\]
\end{definition}
\begin{definition}
Let $I$ be an open interval contained in $(0,\infty)$ and  $\mathcal{P} \subset \mathcal{P}_I(\mu)$.
For a function $\Phi$ on $\mathcal{X}$ and a function $\phi$ on $I$, 
the expression $\mathbb{I}_{\Phi\phi} \in \mathbb{R}^{\mathcal{P}}$ means $\Phi \cdot (\phi \circ p)\in L^1(\mu)$ for any $p\in \mathcal{P}$.
In this case, we can define a function $\mathbb{I}_{\Phi\phi}\colon \mathcal{P}\to \mathbb{R}$ by 
\[
\mathbb{I}_{\Phi\phi}(p)\coloneqq \int_{\mathcal{X}} \Phi(x)\phi(p(x)) d\mu(x).
\]
We write
\[
\mathbb{I}_{\phi}\coloneqq \mathbb{I}_{\mathbf{1}_{\mathcal{X}} \phi},
\qquad
\mathbb{I}_{\Phi}\coloneqq \mathbb{I}_{\Phi \mathrm{id}_I},\
\qquad
\mathbb{I}\coloneqq \mathbb{I}_{\mathbf{1}_{\mathcal{X}} \mathrm{id}_I}.
\]
By abuse of notation, we use the same notation for a map $\Phi$ from $\mathcal{X}$ to $\mathbb{R}^N$,
where $\mathbb{I}_{\Phi\phi}$ is a map from $\mathcal{P}$ to $\mathbb{R}^N$.
\end{definition}
Note that  $\mathbb{I}\in \mathbb{R}^{\mathcal{P}(\mu)}$ and $\mathbb{I}\equiv 1$ on $\mathcal{P}(\mu)$.
\subsection{Statistical model}
In order to analyze a manifold in $\mathcal{P}(\mu)$ in the context of differential geometry,
we introduce a notion of statistical model. 
\begin{definition}\label{compati}
A manifold $\mathcal{M}$ in $\mathcal{P}(\mu)$ is called a \emph{statistical model} 
if the following five conditions hold.
\begin{enumerate}
\renewcommand{\theenumi}{C\arabic{enumi}}
\renewcommand{\labelenumi}{(\theenumi)}
 \setlength{\leftskip}{-9pt}
\item\label{C5}
There exists a $\mu$-null set $\mathcal{N}\subset \mathcal{X}$ such that 
the topology of $\mathcal{M}$ coincides with  the induced topology by the family of functions  $(e_x)_{x\in \mathcal{X}\setminus\mathcal{N}}$. 
\item\label{C1}
For $x \in \mathcal{X}\setminus\mathcal{N}$, 
the function $e_x\colon \mathcal{M}\to \mathbb{R}$ is smooth.
\item\label{C2}
For $p\in \mathcal{M}$, a function on ${\mathcal{X}}$ sending $x$ to 
the derivatives of $e_x$ of any order at~$p$ is Borel.
\item\label{C3}
For $V\in \mathfrak{X}(\mathcal{M})$ and $p\in \mathcal{M}$,
$V_p(e_x)=0$ holds for all $x\in {\mathcal{X}}\setminus\mathcal{N}$
if and only if $V_p=0$.
\item\label{C4}
For $V,W\in \mathfrak{X}(\mathcal{M}), p\in \mathcal{M}$,
the two functions $x\mapsto V_p (e_x)$, $x\mapsto V_p(W e_x)$ belong to $L^1(\mu)$ and
\begin{align*}
\int_{{\mathcal{X}}} V_p( e_x) d\mu(x)=0,
\qquad
\int_{{\mathcal{X}}} V_p( We_x) d\mu(x)=0.
\end{align*}
\end{enumerate}
\end{definition}
For example, the manifold of Gaussian densities on~$\mathbb{R}^d$ is a statistical model in $\mathcal{P}(\mathcal{L}^d)$,
where $\mathcal{L}^d$ denotes  the $d$-dimensional Lebesgue measure on $\mathbb{R}^d$.
Also if ${\mathcal{X}}$ is a finite set and $\mu$ is the counting measure on ${\mathcal{X}}$, 
then $\mathcal{P}(\mu)$ itself is a statistical model.
We will verify in Lemma~\ref{lem} that Conditions \eqref{C5}--\eqref{C3} are natural requirements for an $(h,\tau)$-exponential family.
Since $\mathbb{I}\equiv 1$ on $\mathcal{P}(\mu)$,
Condition \eqref{C4} means that the integration and the differentiation  of $e_x$ commute, that is,
\begin{align*}
V_p(\mathbb{I})=\int_{{\mathcal{X}}} V_p( e_x) d\mu(x),\qquad
V_p(W\mathbb{I})=\int_{{\mathcal{X}}} V_p(W e_x) d\mu(x).
\end{align*}
Note that, for a sequence $(p_k)_{k\in \mathbb{N}}$ in $\mathcal{M}$ and $p\in \mathcal{M}$,
$(p_k)_{k\in \mathbb{N}}$ converges to $p$ in $\mathcal{M}$ 
if and only if $(p_k)_{k\in \mathbb{N}}$ converges to $p$ $\mu$-almost everywhere.

\subsection{Hessian structure}
Let us review the notion of Hessian structure.
\begin{definition}\label{basicdef}
Let $g$ and $\nabla$ be a Riemannian metric and a connection on $\mathcal{M}$, respectively.
\begin{enumerate}
 \setlength{\leftskip}{-15pt}
\item
A coordinate system $(U,\xi)$ on $\mathcal{M}$ is \emph{affine} with respect to $\nabla$ 
if all the Christoffel symbols for $\xi$ vanish everywhere on~$U$.
\item
We say that $\nabla$ is \emph{flat} if its torsion and curvature vanish everywhere on $\mathcal{M}$.
\item
A pair $(g,\nabla)$ is called a \emph{Hessian structure} on $\mathcal{M}$ if $\nabla$ is flat and, for each point $p\in \mathcal{M}$, there exists a smooth function~$\Psi$ 
defined on an open neighborhood $U$ of $p$ in $\mathcal{M}$ such that $g=\nabla d \Psi$ on $U$.
We call $\Psi$ a \emph{local potential} on $U$ of $(g,\nabla)$.
\item
A pair $(\overline{g},\overline{\nabla})$ of a Riemannian metric and a connection on $\mathcal{M}$
is \emph{$1$-conformally equivalent} to $(g,\nabla)$ if 
there exists a positive smooth function $\zeta$ on $\mathcal{M}$ such that
\begin{align*}
\overline{g}=\zeta g,\qquad 
\overline{g}(\overline{\nabla}_V W,Z)=\overline{g}(\nabla_V W,Z)-Z(\log \zeta) \overline{g}(V,W)
\end{align*}
hold on $\mathcal{M}$ and for $V,W,Z\in \mathfrak{X}(\mathcal{M})$.
We call $\zeta$ a \emph{$1$-conformal factor}.
\end{enumerate}
\end{definition}
A condition for a connection to be flat is known.
\begin{proposition}{\rm(\cite{Shima}*{Proposition~1.1})}\label{straightforward}
A connection $\nabla$ on $\mathcal{M}$ is flat if and only if, for each $p\in \mathcal{M}$, there exists an affine coordinate system around $p$ with respect to~$\nabla$.
\end{proposition}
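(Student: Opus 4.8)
The plan is to prove both implications through the standard correspondence between $\nabla$-affine coordinates and commuting $\nabla$-parallel frames. For the easy direction, suppose an affine coordinate system exists around every point. In such a chart all Christoffel symbols vanish identically, and since the torsion and curvature tensors are built algebraically from the Christoffel symbols and their first derivatives, both vanish on that chart; as the charts cover $\mathcal{M}$, the torsion and curvature of $\nabla$ vanish everywhere, i.e. $\nabla$ is flat.

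For the converse, fix $p\in\mathcal{M}$ and shrink to a coordinate ball $U\ni p$ that is simply connected. First I would build a $\nabla$-parallel frame on $U$: fix a basis $(v_1,\dots,v_n)$ of $T_p\mathcal{M}$ and define $e_i$ by parallel transport of $v_i$ from $p$ along curves in $U$. Here is where the hypothesis enters: the vanishing of curvature forces parallel transport to be invariant under endpoint-fixing homotopies, and simple connectedness of $U$ then makes it path-independent, so each $e_i\in\mathfrak{X}(U)$ is well-defined with $e_i(p)=v_i$ and $\nabla_X e_i=0$ for all $X\in\mathfrak{X}(U)$; equivalently, $R^\nabla=0$ is exactly the integrability condition of the overdetermined linear system $\nabla_X e_i=0$ (the curvature of the induced principal connection on the frame bundle), and Frobenius' theorem yields the flat sections. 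The $e_i$ form a frame on a neighborhood of $p$ since they do at $p$.

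It then remains to straighten this frame. Since $\nabla$ is torsion-free and the $e_i$ are parallel,
\[
[e_i,e_j]=\nabla_{e_i}e_j-\nabla_{e_j}e_i-T(e_i,e_j)=0,
\]
so $(e_1,\dots,e_n)$ is a commuting frame of pointwise linearly independent vector fields. By the classical theorem on simultaneous flow-box coordinates for such frames, after possibly shrinking $U$ there is a coordinate system $(U,\xi=(\xi^i))$ with $\partial_i=e_i$ for all $i$. In these coordinates $\nabla_{\partial_i}\partial_j=\nabla_{e_i}e_j=0$, so every Christoffel symbol vanishes on $U$ and $(U,\xi)$ is affine with respect to $\nabla$.

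The hard part is the construction of the parallel frame: this is precisely where one must invoke $R^\nabla=0$, either as the integrability condition for the linear PDE system $\nabla_X e_i=0$ or as the curvature of the principal connection on the frame bundle, together with a reduction to a simply connected neighborhood. The torsion hypothesis then contributes only the commutativity of the frame, and everything else is the standard flow-box argument plus routine bookkeeping with Christoffel symbols.
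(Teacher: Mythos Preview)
Your argument is correct and is the standard proof of this classical fact. Note, however, that the paper does not actually prove this proposition: it is stated with a citation to \cite{Shima}*{Proposition~1.1} and used as a black box, so there is no ``paper's own proof'' to compare against. Your write-up supplies precisely the argument one finds in such references---vanishing curvature yields a parallel frame on a simply connected neighborhood (via path-independence of parallel transport or, equivalently, Frobenius on the frame bundle), vanishing torsion makes that frame commute, and the flow-box theorem produces the affine chart---so nothing is missing.
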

If a coordinate system $(U,\xi)$ is affine with respect to a connection $\nabla$, 
then
\begin{equation}\label{flatness}
(\nabla d\varphi)(\partial_i,\partial_j)=\partial_i\partial_j \varphi
\quad\text{for }
\varphi \in C^\infty(U)
\text{ and }1\leq i,j\leq n.
\end{equation}
%
\subsection{Contrast function}
We briefly review the properties of a contrast function.
For $V\in \mathfrak{X}(\mathcal{M})$, 
we set 
$
\widetilde{V}\coloneqq (V,0), \widehat{V}\coloneqq (0,V)\in \mathfrak{X}(\mathcal{M}\times \mathcal{M}).
$
We have
\[
\widetilde{V}\widetilde{W}=\widetilde{VW},\qquad \widehat{V}\widehat{W}=\widehat{VW},\qquad
\widetilde{V}\widehat{W}=\widehat{W}\widetilde{V},
\]
which are generally differ from $(V,W)\in \mathfrak{X}(\mathcal{M}\times \mathcal{M})$.

Let $D$ be a contrast function on $\mathcal{M}$.
For a curve  $\varepsilon \mapsto \sigma_{V}^{\varepsilon}(p)$ 
passing through $p\in\mathcal{M}$ at $\varepsilon=0$ with initial velocity~$V_p$,
if $V_p \neq 0$ and $\varepsilon$ is small enough, then 
the function $\varepsilon \mapsto D(p, \sigma_{V}^{\varepsilon}(p))$
is nonnegative and equal to zero if and only if $\varepsilon=0$, consequently
\begin{align*}
\frac{d}{d\varepsilon} D(p, \sigma_{V}^{\varepsilon}(p)) \bigg|_{\varepsilon=0}=0, \qquad
\frac{d^2}{d\varepsilon^2} D(p, \sigma_{V}^{\varepsilon}(p)) \bigg|_{\varepsilon=0}
\geq 0.
\end{align*}
We will freely use these properties in the sequel.

Apart from Condition~\ref{Eguchi},
a requirement of contrast functions in \cite{AA}*{Eq.(2)} is interpreted as follows.
\begin{condition}\label{AyA}
For $V\in \mathfrak{X}(\mathcal{M})$ and $p\in \mathcal{M}$ with $V_p\neq0$, 
let $\varepsilon \mapsto \sigma_{V}^{\varepsilon}(p)$ be a curve 
passing through $p$ at $\varepsilon=0$ with initial velocity~$V_p$.
Then
\[
\frac{d^2}{d\varepsilon^2} D(p, \sigma_{V}^{\varepsilon}(p)) \bigg|_{\varepsilon=0}> 0.
\]
\end{condition}
%
We show the equivalence between Condition~\ref{Eguchi} and Condition~\ref{AyA}.
This equivalence is claimed in~\cite{AA}*{Eq.(2)} without proof.
\begin{proposition}\label{equiv}
For a contrast function $D$ on $\mathcal{M}$, 
\[
-\widetilde{V}_{(p,p)}(\widehat{V}D)=\frac{d^2}{d\varepsilon^2} D(p,\sigma_{V}^{\varepsilon}(p)) \bigg|_{\varepsilon=0}
\qquad\text{for $V\in \mathfrak{X}(M)$ and $p\in M$,}
\]
where $\varepsilon \mapsto \sigma_{V}^{\varepsilon}(p)$ is a curve 
passing through $p$ at $\varepsilon=0$ with initial velocity~$V_p$.
Consequently 
Condition~{\rm\ref{Eguchi}} and Condition~{\rm\ref{AyA}} are equivalent to each other.
\end{proposition}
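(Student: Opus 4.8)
The plan is to compute both sides in an arbitrary local coordinate system and show they agree termwise. Fix $p\in\mathcal{M}$, fix $V\in\mathfrak{X}(\mathcal{M})$, and choose a coordinate system $(U,\xi)$ around $p$; write $V=\sum_i a^i\partial_i$ near $p$ with $a^i=V(\xi^i)\in C^\infty(U)$. Since $D$ is smooth, I would expand $D(\sigma_V^\varepsilon(p),\sigma_V^\varepsilon(p)')$ along the curve. Concretely, write the curve in coordinates as $\varepsilon\mapsto\xi(\sigma_V^\varepsilon(p))=\gamma(\varepsilon)$ with $\gamma(0)=\xi(p)$ and $\dot\gamma(0)=(a^i(p))_i$, and set $F(\varepsilon)\coloneqq D(p,\sigma_V^\varepsilon(p))$. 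Using the coordinate representation $D(q,q')$ as a smooth function of $(\xi(q),\xi(q'))$, the chain rule gives
\[
\frac{d^2}{d\varepsilon^2}F(\varepsilon)\Big|_{\varepsilon=0}
=\sum_{i,j} (\partial_{\widehat{i}}\partial_{\widehat{j}}D)(p,p)\,a^i(p)a^j(p)
+\sum_i (\partial_{\widehat{i}}D)(p,p)\,\ddot\gamma^i(0),
\]
where $\partial_{\widehat{i}}$ denotes differentiation in the second ($q'$) slot. The key point is that the second sum vanishes: by the first-order property of a contrast function, $(\partial_{\widehat{i}}D)(p,p)=\widehat{\partial_i}_{(p,p)}D=0$ for every $i$ (this is exactly the vanishing of the first derivative along any curve through $p$, stated in the excerpt). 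Hence only the Hessian term in the $q'$-variable survives, and it is independent of the chosen curve.

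Next I would identify the right-hand side of the claimed identity, $-\widetilde{V}_{(p,p)}(\widehat{V}D)$, in the same coordinates. One has $\widehat{V}D=\sum_j (a^j\circ\pi_2)\,\partial_{\widehat{j}}D$ as a function on $U\times U$, where $\pi_2$ is projection to the second factor. Applying $\widetilde{V}$ (which differentiates only in the first slot, so it kills $a^j\circ\pi_2$) yields $\widetilde{V}(\widehat{V}D)=\sum_j (a^j\circ\pi_2)\,\widetilde{V}(\partial_{\widehat{j}}D)=\sum_{i,j}(a^i\circ\pi_1)(a^j\circ\pi_2)\,\partial_{\widetilde{i}}\partial_{\widehat{j}}D$. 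Evaluating at $(p,p)$ gives $\sum_{i,j}a^i(p)a^j(p)\,(\partial_{\widetilde{i}}\partial_{\widehat{j}}D)(p,p)$. So I must reconcile this with the expression involving $\partial_{\widehat{i}}\partial_{\widehat{j}}D$ from the previous paragraph. This is where the second-order vanishing is used: differentiating the identity $(\partial_{\widehat{j}}D)(q,q)=0$ (valid for all $q$, in coordinates where the diagonal is $\{\gamma=\gamma'\}$) in the $i$th coordinate direction along the diagonal gives $(\partial_{\widetilde{i}}\partial_{\widehat{j}}D)(p,p)+(\partial_{\widehat{i}}\partial_{\widehat{j}}D)(p,p)=0$, hence $(\partial_{\widetilde{i}}\partial_{\widehat{j}}D)(p,p)=-(\partial_{\widehat{i}}\partial_{\widehat{j}}D)(p,p)$. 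Substituting, $-\widetilde{V}_{(p,p)}(\widehat{V}D)=\sum_{i,j}a^i(p)a^j(p)\,(\partial_{\widehat{i}}\partial_{\widehat{j}}D)(p,p)$, which matches $\frac{d^2}{d\varepsilon^2}F(\varepsilon)|_{\varepsilon=0}$ exactly.

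Finally, the equivalence of the two conditions is immediate from the identity: for $V_p\neq 0$ one has $a^i(p)\neq 0$ for some $i$, and the identity says the second-order curve derivative (Condition~\ref{AyA}) equals $-\widetilde{V}_{(p,p)}(\widehat{V}D)$ (the quantity in Condition~\ref{Eguchi}), so positivity of one is positivity of the other; moreover the identity shows the curve derivative is independent of the choice of curve, so Condition~\ref{AyA} is well-posed. The main obstacle I anticipate is purely bookkeeping: setting up the coordinate notation on $\mathcal{M}\times\mathcal{M}$ cleanly (distinguishing the two copies of the slots and the operators $\widetilde{\partial_i}$, $\widehat{\partial_i}$), and justifying that differentiating the diagonal identity $(\widehat{V}D)(q,q)=0$ is legitimate — this needs the smoothness of $D$ and the fact that the curve $\varepsilon\mapsto(\sigma_V^\varepsilon(p),\sigma_V^\varepsilon(p))$ lies in the diagonal, which reduces the whole computation to elementary multivariable calculus once coordinates are fixed. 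No compactness or statistical-model hypotheses are needed here; only that $D$ is a contrast function (smooth, nonnegative, vanishing exactly on the diagonal).
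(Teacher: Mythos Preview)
Your proof is correct and follows essentially the same approach as the paper: both compute the second curve derivative in local coordinates, use the first-order vanishing $(\partial_{\widehat i}D)(p,p)=0$ to eliminate the $\ddot\gamma$ term, and then differentiate the diagonal identity $(\widehat V D)\circ\mathrm{diag}\equiv 0$ to obtain $(\partial_{\widetilde i}\partial_{\widehat j}D)(p,p)=-(\partial_{\widehat i}\partial_{\widehat j}D)(p,p)$. The paper additionally records the off-diagonal identity $-\widetilde{W}_{(p,p)}(\widehat{V}D)=\widehat{V}_{(p,p)}(\widehat{W}D)$ for distinct $V,W$ (via the Lie bracket) and phrases the final step through the integral curve, but your streamlined coordinate version already suffices for the stated proposition.
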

\begin{proof}
We first show that 
the second derivative of the function $\varepsilon \mapsto D(p,\sigma_{V}^{\varepsilon}(p))$ at $\varepsilon =0$ is independent of the choice of curves~$\varepsilon \mapsto \sigma_{V}^{\varepsilon}(p)$.
Let $(U, \xi)$ be a coordinate system around $p$.
It turns out that 
\begin{align*}
\partial_i D(p,\cdot)\big|_{p}=
\frac{d}{d\varepsilon} D(p,\sigma_{\partial_i}^{\varepsilon}(p))\bigg|_{\varepsilon=0}
=0.
\end{align*}
Then a direct calculation gives 
\begin{align*}
\frac{d^2}{d\varepsilon^2} D(p,\sigma_{V}^{\varepsilon}(p))\bigg|_{\varepsilon=0}
&=\sum_{i,j=1}^{n}
\left( V^j_p \cdot \partial_j V^i|_p \cdot \partial_i D(p,\cdot)\big|_p +V^i_p V^j_p \cdot\partial_i \partial_j D(p,\cdot)\big|_p \right)\\
&=\sum_{i,j=1}^{n} V_p^iV_p^j \cdot \partial_i \partial_j D(p,\cdot)\big|_p,
\end{align*}
which depends only on $V$ and $p$, not on the choice of curves $\varepsilon \mapsto \sigma_{V}^{\varepsilon}(p)$.

Next we show 
\[
-\widetilde{W}_{(p,p)}(\widehat{V}D)=\widehat{V}_{(p,p)}(\widehat{W}D).
\]
Although this  is proved  in~\cite{Eguchi92}*{p.632, 633}, we prove it for the sake of completeness.
Define a map $\mathrm{diag}\colon \mathcal{M} \to \mathcal{M}\times \mathcal{M}$ by $\mathrm{diag}(p)\coloneqq (p,p)$.
Then it turns out that 
\begin{align*}
\left((\widehat{V}D) \circ \mathrm{diag}\right)(p)
&=\widehat{V}_{(p,p)}D
=\frac{d}{d\varepsilon}D(p, \sigma_{V}^{\varepsilon}(p)) \bigg|_{\varepsilon=0}
=0
\end{align*}
hence $W( (\widehat{V}D) \circ \mathrm{diag})=0$ on $\mathcal{M}$.
A direct computation provides
\begin{align*}
0=W_p\left(( \widehat{V}D) \circ \mathrm{diag} \right)
&=\frac{\partial}{\partial\varepsilon'}\left(\frac{\partial}{\partial\varepsilon} D(\sigma_{W}^{\varepsilon'}(p),\sigma_{V}^\varepsilon(\sigma_{W}^{\varepsilon'}(p))) \right)\bigg|_{\varepsilon=0}\bigg|_{\varepsilon'=0}\\
&=
\frac{\partial^2}{\partial\varepsilon'\partial\varepsilon} D(p,\sigma_{V}^\varepsilon(\sigma_{W}^{\varepsilon'}(p))) \bigg|_{\varepsilon=0,\varepsilon'=0}
+
\frac{\partial^2}{\partial\varepsilon'\partial\varepsilon} D(\sigma_{W}^{\varepsilon'}(p),\sigma_{V}^{\varepsilon}(p)) \bigg|_{\varepsilon=0,\varepsilon'=0}\\
&
=\widehat{W}_{(p,p)}(\widehat{V}D)+\widetilde{W}_{(p,p)}(\widehat{V}D),
\end{align*}
implying $-\widetilde{W}_{(p,p)}(\widehat{V}D)=\widehat{W}_{(p,p)}(\widehat{V}D)$.
In addition, we find 
\[
\widehat{V}_{(p,p)}(\widehat{W}D)-\widehat{W}_{(p,p)}(\widehat{V}D)
=\widehat{[V,W]}_{(p,p)}(D)
=\left(( \widehat{[V,W]}D) \circ \mathrm{diag} \right)(p)
=0
\]
hence $\widehat{W}_{(p,p)}(\widehat{V}D)=\widehat{V}_{(p,p)}(\widehat{W}D)$.
These show $-\widetilde{W}_{(p,p)}(\widehat{V}D)=\widehat{V}_{(p,p)}(\widehat{W}D)$.

If $\varepsilon \mapsto \sigma_V^\varepsilon(p)$ is the integral curve of $V$ passing through~$p$ at $\varepsilon=0$,
then, by definition,
\begin{align*}
\widehat{V}_{(p, \sigma^\varepsilon_V(p))}D&=
\frac{d}{d\varepsilon} D(p,\sigma_{V}^{\varepsilon}(p)),\\
-\widetilde{V}_{(p,p)}(\widehat{V}D)&=\widehat{V}_{(p, p) } ( \widehat{V} D)
=
V_p( \widehat{V}_{(p,\cdot) } D)
=\frac{d}{d\varepsilon} \widehat{V}_{(p,\sigma_{V}^{\varepsilon}(p)) }D \bigg|_{\varepsilon=0}
=
\frac{d^2}{d\varepsilon^2} D(p,\sigma_{V}^{\varepsilon}(p)) \bigg|_{\varepsilon=0}
\end{align*}
follows.
This completes the proof.
\end{proof}
We recall two known facts of geometric structure induced by contrast functions.
Since the proof is straightforward and found in the reference, we omit it. 
\begin{proposition}{\rm(\cite{Kurose}*{Remark in p.431})}
Let $D$ be a contrast function on~$\mathcal{M}$ satisfying Condition~\rm{\ref{Eguchi}}.
For a positive smooth function $\zeta$ on $\mathcal{M}$, define 
\[
\overline{D}(p,p')\coloneqq \zeta(p) D(p,p')\quad \text{for }p,p' \in \mathcal{M}.
\] 
Then $\overline{D}$ is a contrast function on $\mathcal{M}$ satisfying Condition~\rm{\ref{Eguchi}}.
Define $(g,\nabla)$ (resp.\,$(\overline{g},\overline{\nabla}$) by~\eqref{met} associated to~$D$ (resp.\,$\overline{D})$.
Then
$(\overline{g},\overline{\nabla})$ 
is 
$1$-conformally equivalent to 
$(g,\nabla)$,
where  $\zeta$ becomes a $1$-conformal factor.
\end{proposition}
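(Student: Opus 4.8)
The plan is to verify directly that $\overline D$ is a contrast function satisfying Condition~\ref{Eguchi}, and then to extract the induced pair $(\overline g,\overline\nabla)$ from formula~\eqref{met} and compare it with $(g,\nabla)$. First I would note that $\overline D=\zeta D$ is visibly smooth, $[0,\infty)$-valued, and vanishes exactly on the diagonal since $\zeta>0$; so it is a contrast function. For Condition~\ref{Eguchi}, the key computational input is that $\widehat W_{(p,p)}(\overline D)=\zeta(p)\,\widehat W_{(p,p)}(D)+(\widehat W_{(p,p)}\zeta\!\circ\!\pi_1)\,D(p,p)$, but the second term is zero because $D(p,p)=0$, and in fact $\widehat W$ differentiates only the second slot so it kills $\zeta\circ\pi_1$ altogether; hence $\widehat W(\overline D)=(\zeta\circ\pi_1)\,\widehat W(D)$ as functions on $\mathcal M\times\mathcal M$. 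Applying $-\widetilde V_{(p,p)}$ and using the product rule in the \emph{first} slot gives
\begin{align*}
-\widetilde V_{(p,p)}(\widehat V\,\overline D)
&=-\zeta(p)\,\widetilde V_{(p,p)}(\widehat V D)-V_p(\zeta)\cdot \widehat V_{(p,p)}(D).
\end{align*}
The last term vanishes since $\widehat V_{(p,p)}D=\tfrac{d}{d\varepsilon}D(p,\sigma_V^\varepsilon(p))|_{\varepsilon=0}=0$, so $-\widetilde V_{(p,p)}(\widehat V\,\overline D)=\zeta(p)\,(-\widetilde V_{(p,p)}(\widehat V D))>0$ whenever $V_p\ne0$, which is Condition~\ref{Eguchi} for $\overline D$. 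This same identity immediately yields $\overline g=\zeta g$.

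Next I would compute $\overline\nabla$ from the second equation in~\eqref{met}. Starting from $\widehat V\widetilde W(\overline D)=(\zeta\circ\pi_1)\,\widehat V\widetilde W(D)+\widehat V(\zeta\circ\pi_1)\cdot\widetilde W(D)$ — where I use that $\widehat V$ kills $\zeta\circ\pi_1$ only after noting $\widetilde W$ acts first; more carefully, $\widetilde W(\overline D)=(\zeta\circ\pi_1)\widetilde W(D)+(\widetilde W(\zeta\circ\pi_1))D$, and then $\widehat V$ annihilates $\zeta\circ\pi_1$ and its $\widetilde W$-derivative, giving $\widehat V\widetilde W(\overline D)=(\zeta\circ\pi_1)\,\widehat V\widetilde W(D)$. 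Hmm, this suggests $\overline\nabla=\nabla$, which is wrong, so the subtlety I must handle is that the defining formula uses $\widehat Z_{(p,p)}(\widehat V\widetilde W D)$, not $\widehat Z(\widetilde V\widetilde W D)$; I must be careful about which slot carries $\zeta$. Writing $\overline g_p(\overline\nabla_ZV,W)=-\widehat Z_{(p,p)}(\widehat V\widetilde W\,\overline D)$ and expanding $\widetilde W(\overline D)=(\zeta\circ\pi_1)\widetilde W(D)+(\widetilde W(\zeta\circ\pi_1))D$, applying $\widehat V$ then $\widehat Z$ and evaluating on the diagonal, the terms with an undifferentiated $D$ die, and one picks up exactly one surviving cross term of the form $Z_p(\zeta)\cdot(-\widehat V_{(p,p)}(\widehat W D))=Z_p(\zeta)\,g_p(V,W)$ together with $\zeta(p)\,g_p(\nabla_ZV,W)$. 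Dividing by $\zeta$ and rearranging gives precisely the $1$-conformal relation $\overline g(\overline\nabla_VW,Z)=\overline g(\nabla_VW,Z)-Z(\log\zeta)\,\overline g(V,W)$.

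The main obstacle is purely bookkeeping: keeping straight which of the two factors in $\mathcal M\times\mathcal M$ each vector field $\widetilde{(\cdot)}$ or $\widehat{(\cdot)}$ differentiates, and which terms vanish because they retain an undifferentiated factor of $D$ evaluated on the diagonal. Once one systematically uses $D|_{\mathrm{diag}}=0$, $\widehat V_{(p,p)}(D)=0$, and the symmetry identity $-\widetilde W_{(p,p)}(\widehat V D)=\widehat V_{(p,p)}(\widehat W D)$ established in Proposition~\ref{equiv}, both $\overline g=\zeta g$ and the connection relation drop out. Since the statement explicitly says the proof is straightforward and available in \cite{Kurose}, I would at most sketch these two computations and refer to the reference for full details; no genuinely new idea is needed beyond careful application of the product rule together with the vanishing of $D$ on the diagonal.
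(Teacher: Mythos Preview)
The paper omits this proof entirely, deferring to \cite{Kurose}, so there is nothing to compare against; your direct product-rule approach is exactly the intended ``straightforward'' computation, and your identification of the three key ingredients ($D|_{\mathrm{diag}}=0$, $\widehat V_{(p,p)}D=0$, and the symmetry identity from Proposition~\ref{equiv}) is right.

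One small bookkeeping slip to fix when you write it out: in the connection computation you start from $\overline g_p(\overline\nabla_Z V,W)=-\widehat Z_{(p,p)}(\widehat V\widetilde W\,\overline D)$, and the surviving cross term is
\[
(W\zeta\circ\pi_1)\,\widehat Z_{(p,p)}\widehat V(D)
\;=\;W_p(\zeta)\,g_p(V,Z),
\]
not $Z_p(\zeta)\,g_p(V,W)$ as you wrote. (The factor $W\zeta\circ\pi_1$ comes from $\widetilde W$ hitting $\zeta\circ\pi_1$; it is \emph{not} killed by $\widehat V$ or $\widehat Z$ because those annihilate the coefficient but not the remaining factor $D$, which then gets differentiated to $\widehat Z\widehat V D$.) Only after relabeling $(Z,V,W)\to(V,W,Z)$ to match Definition~\ref{basicdef} does this become $Z(\log\zeta)\,\overline g(V,W)$. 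Your final formula is correct; just make sure the intermediate line reflects the actual variables.
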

\begin{proposition}\label{cano}{\rm(cf.\cite{Shima}*{Section~2.4})}
Let $(g,\nabla)$ be a Hessian structure on $\mathcal{M}$.
For an affine coordinate system $(U, \xi)$ with respect to $\nabla$ and the local potential $\Psi$ on~$U$ of $(g,\nabla)$, 
define $\eta\colon U\to\mathbb{R}^n$
and $\overline{D}\colon U\times U \to \mathbb{R}$ by
\begin{align*}
\eta\coloneqq \left(\partial_i \Psi\right)_{i=1}^{n},\qquad
\overline{D}(p,p')\coloneqq \Psi(p')-\Psi(p)+\langle \xi(p)-\xi(p'),\eta(p)\rangle.
\end{align*}
Then $\overline{D}$ is a contrast function on $U$ satisfying Condition~\rm{\ref{Eguchi}},
which is  called the \emph{canonical divergence} of the Hessian structure $(g,\nabla)$ on $U$.
The Riemannian metric and the connection defined by \eqref{met} associated to $\overline{D}$ is the restriction of $g$ and~$\nabla$ to~$U$, respectively.
\end{proposition}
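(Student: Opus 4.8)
Proposition~\ref{cano} asserts that the canonical divergence $\overline{D}$ attached to a Hessian structure $(g,\nabla)$, via an affine coordinate system $(U,\xi)$ and a local potential $\Psi$, is a contrast function satisfying Condition~\ref{Eguchi}, and that the metric and connection it induces through~\eqref{met} recover $g$ and $\nabla$ on $U$. The proof is entirely a local computation in the affine coordinates, so the plan is to work throughout in $(U,\xi)$ and exploit~\eqref{flatness}.

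The first step is to show $\overline{D}$ is a divergence: writing $\overline{D}(p,p')=\Psi(p')-\Psi(p)-\langle\xi(p')-\xi(p),\eta(p)\rangle$ exhibits it as the first-order Taylor remainder of $\Psi$ at $p$, so by the strict convexity of $\Psi$ in the coordinates $\xi$ (which follows from $g=\nabla d\Psi$ being positive definite together with~\eqref{flatness}, giving the Hessian matrix $(\partial_i\partial_j\Psi)=(g_{ij})>0$), we get $\overline{D}\ge 0$ with equality iff $\xi(p')=\xi(p)$, i.e.\ $p=p'$. Smoothness and finiteness on $U\times U$ are immediate from the formula. The second step is to compute the derivatives appearing in~\eqref{met}. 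Using $\widehat{W}_{(p,q)}\overline{D}=\sum_i W^i(q)\,\partial_i'\overline{D}(p,q)$ where $\partial_i'$ denotes the derivative in the second slot, one finds $\partial_i'\overline{D}(p,p')=\partial_i\Psi(p')-\eta_i(p)$; evaluating at $p'=p$ gives zero (confirming the diagonal vanishing) and then applying $\widetilde{V}$ (a derivative in the first slot, which only hits $-\eta_i(p)=-\partial_i\Psi(p)$) yields $-\widetilde{V}_{(p,p)}(\widehat{W}\overline{D})=\sum_{i,j}V^i(p)W^j(p)\,\partial_i\partial_j\Psi(p)$. By~\eqref{flatness} this equals $\sum_{i,j}V^iW^j(\nabla d\Psi)(\partial_i,\partial_j)=g_p(V,W)$, which simultaneously shows $g$ is recovered and, being a Riemannian metric, that Condition~\ref{Eguchi} holds.

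The third step recovers $\nabla$. One computes $-\widehat{Z}_{(p,p)}(\widehat{V}\widetilde{W}\overline{D})$: the factor $\widetilde{W}$ produces $W^k(p)\,\partial_k(-\eta_i)=-W^k\partial_k\partial_i\Psi$ in the first slot combined with the second-slot derivative $V^i$; applying $\widehat{Z}$ differentiates the second-slot variable, and since $\partial_k\partial_i\Psi$ depends only on the first slot, $\widehat{Z}$ acts only on $V^i(p')$, giving $-\sum_{i,k}Z^i(p)W^k(p)\partial_k\partial_i\Psi(p)\cdot$(something)\dots more carefully, one gets $g_p(\nabla^{\overline{D}}_ZV,W)=\sum_{i,j,k}Z^i V^j_{,i}W^k \partial_j\partial_k\Psi$ where the Christoffel-type terms vanish because $\partial_k\partial_i\Psi$ has no second-slot dependence; comparing with the defining relation of a connection in a coordinate system where $\Gamma^k_{ij}=0$, this identifies $\nabla^{\overline{D}}$ with $\nabla$.

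The only genuine subtlety — and the step I would treat most carefully rather than the routine chain-rule bookkeeping — is justifying that the local potential $\Psi$, which by definition satisfies $g=\nabla d\Psi$ only locally and whose Hessian in affine coordinates is positive definite, actually yields a \emph{strictly convex} function of $\xi$ on a possibly non-convex coordinate domain $U$; this is why the conclusion ``$\overline{D}\ge 0$ with equality iff $p=p'$'' must be read as holding on $U$ after, if necessary, shrinking $U$ to a coordinate ball (on which $\xi(U)$ is convex), so that strict convexity of a function with positive-definite Hessian gives the strict Bregman-type inequality. Since the excerpt states this is straightforward and refers to \cite{Shima}*{Section~2.4}, I would simply record these computations and the convexity remark, and defer the details to the reference; hence, in keeping with the surrounding text, the proof is omitted.
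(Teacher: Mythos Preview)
Your sketch is correct and, as you anticipate in the last line, the paper omits the proof entirely, pointing to \cite{Shima}*{Section~2.4}. Your computation of the metric via $-\widetilde{V}_{(p,p)}(\widehat{W}\overline{D})=\sum_{i,j}V^iW^j\partial_i\partial_j\Psi=g_p(V,W)$ is exactly the expected one, and your final formula in the connection step, $g_p(\nabla^{\overline{D}}_ZV,W)=\sum_{i,j,k}Z^i(\partial_i V^j)W^k\,\partial_j\partial_k\Psi$, is right and identifies $\nabla^{\overline{D}}$ with $\nabla$ since Christoffel symbols vanish in $(U,\xi)$.

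Two minor remarks. First, the exposition of the third step is garbled: the phrase ``$\widetilde{W}$ produces $W^k(p)\,\partial_k(-\eta_i)$'' is not what $\widetilde{W}\overline{D}$ actually is (one gets $\widetilde{W}\overline{D}(p,p')=\sum_{i,k}W^k(p)(\xi^i(p)-\xi^i(p'))\partial_k\partial_i\Psi(p)$, after which $\widehat{V}$ kills the $\xi^i(p')$ and $\widehat{Z}$ hits only $V^j(p')$), and the ``$\cdot$(something)\dots'' placeholder should be replaced by the clean two-line computation. Second, your observation that $\xi(U)$ need not be convex, so that the Bregman-type inequality $\overline{D}\ge0$ with equality iff $p=p'$ may require shrinking $U$ to a coordinate ball, is a genuine point the paper (and the statement as written) glosses over; since the canonical divergence is only used locally here, this causes no trouble, but it is worth flagging.
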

In Proposition~\ref{cano}, $(U,\eta)$ becomes a coordinate system by the nonsingularity of 
\[
\left(\frac{\partial}{\partial \xi^j} \eta_i\right)_{1\leq i,j\leq n}
=
\left(\frac{\partial^2}{\partial \xi^i\partial \xi^j} \Psi\right)_{1\leq i,j\leq n}
=\left( g\left(\frac{\partial}{\partial \xi^i},\frac{\partial}{\partial \xi^j} \right)\right)_{1\leq i,j\leq n}.
\]
The canonical divergence is defined only locally, not globally on a manifold in general.

\section{$(h,\tau)$-divergence}\label{Div}
We discuss  the $(h,\tau)$-divergence and its equivalence relation.
\begin{definition}\label{gauge}
\begin{enumerate}
\item
We set 
\begin{align*}
\mathcal{T}&\coloneqq \left\{(\tau,I) \bigm| \text{$I\subset (0,\infty)$ is an open interval and $\tau \in C^{\infty}(I)$ with $\tau'>0$ on $I$} \right\},\\
\mathcal{G}
&\coloneqq \left\{(h,\tau,I)\bigm| \text{$(\tau,I) \in \mathcal{T}$ and $h \in C^\infty(\tau(I))$ with $h''>0$ on $\tau(I)$} \right\}.
\end{align*}
\item
 \setlength{\leftskip}{-15pt}
For $(h, \tau,I)\in \mathcal{G}$, define $d_{h,\tau}\colon I\times I \to \mathbb{R}$ by 
\begin{align*}
d_{h,\tau}(t,s)\coloneqq h(\tau(t))-h(\tau(s))- \left(\tau(t)-\tau(s)\right) h'(\tau(s)).
\end{align*}
We also define the three smooth functions 
$\ell_{h,\tau}, m_{h,\tau},\gamma_{h,\tau}$ on $I$ respectively by
\begin{align*}
\ell_{h,\tau}\coloneqq h'\circ \tau, \qquad
m_{h,\tau}(t)\coloneqq -\frac{\partial^2}{\partial t\partial s}d_{h,\tau}\bigg|_{(t,t)},\qquad
\gamma_{h,\tau}(t)\coloneqq -\frac{\partial^3}{\partial t \partial s^2}d_{h,\tau}\bigg|_{(t,t)}.
\end{align*}
\item
On $\mathcal{G}$, define an equivalence relation $(h_1, \tau_1,I_1) \simeq (h_2, \tau_2,I_2)$ to mean that $I_1=I_2$ and $d_{h_1,\tau_1}=d_{h_2,\tau_2}$.
\end{enumerate}
\end{definition}
For $(h,\tau, I)\in \mathcal{G}$, 
we see that $(\ell_{h,\tau}, I)\in \mathcal{T}$,  $d_{h,\tau}\geq 0$  and 
\begin{align}\label{corresponding}
d_{h,\tau}(t,s)=\int_{s}^t (\ell_{h,\tau}(u)-\ell_{h,\tau}(s)) \tau'(u)du.
\end{align}
It follows from 
\[
\ell'_{h,\tau}=(h'' \circ \tau)\cdot \tau'>0
\]
that  $d_{h,\tau}(t,s)\in[0,\infty)$ and $d_{h,\tau}(t,s)=0$ holds if and only if $t=s$.
Thus~$d_{h,\tau}$ is a contrast function on $I$.
Moreover, $d_{h,\tau}$ satisfies Condition~\ref{Eguchi} since
\[
-\frac{\partial^2}{\partial t\partial s}d_{h,\tau} \bigg|_{(t,t)}=m_{h,\tau}(t)=
 \ell_{h,\tau}'(t)\tau'(t)
=h''(\tau(t))\tau'(t)^2>0.
\]
Note that $\gamma_{h,\tau} =\ell''_{h,\tau}\cdot \tau' $.

By the nonnegativity of $d_{h,\tau}$, we can define  a divergence on $\mathcal{P}_{I}(\mu)$ by 
\[
D_{h,\tau}(p,p')\coloneqq \int_{{\mathcal{X}}} d_{h,\tau}(p(x), p'(x)) d\mu(x)
\]
and we call it the \emph{$(h,\tau)$-divergence}.

The equivalence class of $(h,\tau,I)$ is completely determined by $(m_{h,\tau},\gamma_{h,\tau},I)$.
\begin{lemma}\label{equivclass}
For $(h_1, \tau_1,I),(h_2, \tau_2,I)\in \mathcal{G}$, the following three conditions are equivalent to each others.
\begin{enumerate}
\renewcommand{\theenumi}{\roman{enumi}}
\renewcommand{\labelenumi}{(\theenumi)}
 \setlength{\leftskip}{-9pt}
\item\label{D1}
$(h_1, \tau_1,I)\simeq (h_2, \tau_2,I)$.
\item\label{D2}
There exist $a_1, a_2,a_3 \in \mathbb{R}$ and $\lambda>0$ such that
\begin{equation*}
\qquad 
h_{1}(r)=h_{2}(\lambda r+a_3)+a_1r+a_2 \quad\text{for } r\in \tau_1(I),\qquad
\tau_2(t)=\lambda \tau_1(t)+a_3 \quad\text{for } t\in I.
\end{equation*}
\item\label{D3}
$(m_{h_1,\tau_1},\gamma_{h_1,\tau_1})=(m_{h_2,\tau_2},\gamma_{h_2,\tau_2})$.
\end{enumerate}
\end{lemma}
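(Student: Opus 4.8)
The plan is to prove the cycle $\eqref{D1}\Rightarrow\eqref{D3}\Rightarrow\eqref{D2}\Rightarrow\eqref{D1}$, with the heart of the argument being the reconstruction of $d_{h,\tau}$ from $(m_{h,\tau},\gamma_{h,\tau})$ in the step $\eqref{D3}\Rightarrow\eqref{D2}$. The implication $\eqref{D1}\Rightarrow\eqref{D3}$ is immediate: $m_{h,\tau}$ and $\gamma_{h,\tau}$ are defined by partial derivatives of $d_{h,\tau}$ evaluated on the diagonal, so if $d_{h_1,\tau_1}=d_{h_2,\tau_2}$ on $I\times I$ then these derivatives agree. The implication $\eqref{D2}\Rightarrow\eqref{D1}$ is a direct substitution: assuming the stated relations, one computes $h_1(\tau_1(t))=h_2(\lambda\tau_1(t)+a_3)+a_1\tau_1(t)+a_2=h_2(\tau_2(t))+a_1\tau_1(t)+a_2$ and $\tau_1(t)-\tau_1(s)=\lambda^{-1}(\tau_2(t)-\tau_2(s))$, while the chain rule gives $h_1'(\tau_1(s))=\lambda h_2'(\tau_2(s))+a_1$; plugging these into the definition of $d_{h_1,\tau_1}(t,s)$, the affine terms $a_1r+a_2$ and the factor $\lambda$ cancel exactly, yielding $d_{h_1,\tau_1}=d_{h_2,\tau_2}$ (and $I_1=I_2=I$ by hypothesis).

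For the main step $\eqref{D3}\Rightarrow\eqref{D2}$, I would work with $\ell_{h_j,\tau_j}=h_j'\circ\tau_j$ and $\tau_j$ rather than with $d_{h_j,\tau_j}$ directly, using the identities recorded just before Lemma~\ref{equivclass}: $m_{h,\tau}=\ell_{h,\tau}'\cdot\tau'$ and $\gamma_{h,\tau}=\ell_{h,\tau}''\cdot\tau'$. From $m_{h,\tau}>0$ we may form $\gamma_{h,\tau}/m_{h,\tau}=\ell_{h,\tau}''/\ell_{h,\tau}'=(\log\ell_{h,\tau}')'$. Since $m_{h_1,\tau_1}=m_{h_2,\tau_2}$ and $\gamma_{h_1,\tau_1}=\gamma_{h_2,\tau_2}$, the functions $(\log\ell_{h_1,\tau_1}')'$ and $(\log\ell_{h_2,\tau_2}')'$ coincide on $I$, so $\log\ell_{h_1,\tau_1}'-\log\ell_{h_2,\tau_2}'$ is constant; equivalently $\ell_{h_1,\tau_1}'=\lambda\,\ell_{h_2,\tau_2}'$ on $I$ for some $\lambda>0$. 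Integrating once gives $\ell_{h_1,\tau_1}=\lambda\,\ell_{h_2,\tau_2}+a_1$ for some $a_1\in\mathbb{R}$. Next, from $m_{h_1,\tau_1}=m_{h_2,\tau_2}$, i.e. $\ell_{h_1,\tau_1}'\tau_1'=\ell_{h_2,\tau_2}'\tau_2'$, and $\ell_{h_1,\tau_1}'=\lambda\ell_{h_2,\tau_2}'$ with $\ell_{h_2,\tau_2}'>0$, we deduce $\tau_2'=\lambda\tau_1'$ on $I$, hence $\tau_2=\lambda\tau_1+a_3$ for some $a_3\in\mathbb{R}$; this is the second relation in \eqref{D2}. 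Finally, to get the first relation, write the identity $\ell_{h_1,\tau_1}=\lambda\ell_{h_2,\tau_2}+a_1$ as $h_1'(\tau_1(t))=\lambda h_2'(\lambda\tau_1(t)+a_3)+a_1$ for all $t\in I$; since $\tau_1$ is a smooth increasing bijection of $I$ onto $\tau_1(I)$, this says $h_1'(r)=\lambda h_2'(\lambda r+a_3)+a_1$ for all $r\in\tau_1(I)$. Observing that $\frac{d}{dr}\big[h_2(\lambda r+a_3)+a_1 r\big]=\lambda h_2'(\lambda r+a_3)+a_1=h_1'(r)$, the function $h_1(r)-h_2(\lambda r+a_3)-a_1 r$ has zero derivative on the interval $\tau_1(I)$, hence equals a constant $a_2$, giving $h_1(r)=h_2(\lambda r+a_3)+a_1 r+a_2$ as required.

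The only delicate points are bookkeeping rather than conceptual: one must check that $\tau_1(I)$ is genuinely an interval (so that "constant derivative implies constant" applies), which follows since $\tau_1$ is continuous and $I$ is an interval; that $\lambda>0$ rather than merely $\lambda\neq0$, which is forced because both $\ell_{h_j,\tau_j}'=(h_j''\circ\tau_j)\tau_j'$ are strictly positive; and that the relation $h_1'(r)=\lambda h_2'(\lambda r+a_3)+a_1$ indeed holds on all of $\tau_1(I)$, i.e. that $\lambda r+a_3\in\tau_2(I)$ whenever $r\in\tau_1(I)$, which is exactly the content of $\tau_2=\lambda\tau_1+a_3$. I expect no substantive obstacle; the argument is essentially the observation that the pair $(m,\gamma)$ records $\tau'$ and $(\log\ell')'$, from which $\ell$ and $\tau$ are recovered up to the affine ambiguity $\lambda,a_1,a_3$, and then $h$ up to the further additive constant $a_2$.
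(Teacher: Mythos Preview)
Your proof is correct and follows essentially the same approach as the paper: the cycle $\eqref{D1}\Rightarrow\eqref{D3}\Rightarrow\eqref{D2}\Rightarrow\eqref{D1}$, with the key step $\eqref{D3}\Rightarrow\eqref{D2}$ handled by forming $\gamma_{h,\tau}/m_{h,\tau}=\ell_{h,\tau}''/\ell_{h,\tau}'$ to obtain $\ell_{h_1,\tau_1}'=\lambda\ell_{h_2,\tau_2}'$, then using $m_{h_1,\tau_1}=m_{h_2,\tau_2}$ to get $\tau_2'=\lambda\tau_1'$. The only cosmetic difference is that the paper passes from $\ell'_{h_1,\tau_1}=\lambda\ell'_{h_2,\tau_2}$ directly to $h_1''(r)=\lambda^2 h_2''(\lambda r+a_3)$ and integrates twice, whereas you first integrate to $\ell_{h_1,\tau_1}=\lambda\ell_{h_2,\tau_2}+a_1$, rewrite as an identity for $h_1'$, and integrate once; the two routes produce the same constants $a_1,a_2$ in a different order.
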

\begin{proof}
Since $m_{h,\tau}$ and $\gamma_{h,\tau}$ are  the partial derivatives of $d_{h,\tau}$ for $(h,\tau, I)\in \mathcal{G}$, 
the implication from \eqref{D1} to \eqref{D3} is trivial.
Assume \eqref{D2}.
Then a direct calculation gives
\begin{align*}
\ell_{h_1,\tau_1}(t)&=h_{1}'(\tau_1(t))=\lambda h_{2}'(\lambda \tau_1(t)+a_3)+a_1=\lambda \ell_{h_2,\tau_2}(t)+a_1,\\
d_{h_1,\tau_1}(t,s)
&=\int_t^s \left(\ell_{h_1,\tau_1}(u)-\ell_{h_1,\tau_1}(s)\right)\tau_1'(u)du\\
&=\int_t^s \lambda \left(\ell_{h_2,\tau_2}(u)-\ell_{h_2,\tau_2}(s)\right)\lambda^{-1}\tau_2'(u)du\\
&=d_{h_2,\tau_2}(t,s),
\end{align*}
which yields \eqref{D1}.

The rest is to prove the implication from \eqref{D3} to \eqref{D2}.
Assume \eqref{D3}.
Then we have
\[
\frac{\ell''_{h_1,\tau_1}}{\ell'_{h_1,\tau_1}}
=\frac{\ell''_{h_1,\tau_1}\cdot \tau'_1}{\ell'_{h_1,\tau_1}\cdot \tau'_1}
=\frac{\gamma_{h_1,\tau_1}}{m_{h_1,\tau_1}}
=\frac{\gamma_{h_2,\tau_2}}{m_{h_2,\tau_2}}
=\frac{\ell''_{h_2,\tau_2}\cdot \tau'_2}{\ell'_{h_2,\tau_2}\cdot \tau'_2}
=\frac{\ell''_{h_2,\tau_2}}{\ell'_{h_2,\tau_2}}
\]
which implies that  there exists $\lambda>0$ such that 
\begin{equation}\label{lambda}
\ell'_{h_{1},\tau_1}=\lambda \ell'_{h_{2},\tau_2}\quad
\text{on }I.
\end{equation}
Combining this with 
\[
\ell'_{h_1,\tau_1}\cdot \tau'_{1}
=m_{h_1,\tau_1}
=m_{h_2,\tau_2}
=\ell'_{h_2,\tau_2}\cdot \tau'_{2}
\]
leads to $\tau'_{2}=\lambda \tau'_{1}$ on $I$,
that is, 
there exists $a_3 \in \mathbb{R}$ such that
\begin{equation}\label{a3}
\tau_2=\lambda \tau_1 +a_3 \quad\text{on } I.
\end{equation}
By $\ell'_{h_{j},\tau_j}=(h_j'' \circ \tau_j)\cdot \tau'_j $ on $I$, we observe from \eqref{lambda} and \eqref{a3} that 
\[
h_1''(r )=\lambda^2 h_2''(\lambda r+a_3) \quad \text{for } r\in \tau_1(I).
\]
Integrating this equality twice ensures the existence of $a_1,a_2 \in \mathbb{R}$ such that 
\[
h_{1}(r)=h_{2}(\lambda r+a_3)+a_1r+a_2 \quad\text{for } r\in \tau_1(I).
\]
This with \eqref{a3} implies \eqref{D2}.
\end{proof}
Since $m_{h,\tau}, \gamma_{h,\tau}$ determine  an equivalence relation on $\mathcal{G}$  by Lemma~\ref{equivclass}
and these two functions are the products of the derivatives of $\tau$ and $\ell_{h,\tau}$, 
we can consider an equivalence relation on $\mathcal{G}$
 by the use of $(\tau, \ell_{h,\tau},I)$.  
\begin{definition}\label{gauge2}
\begin{enumerate}
\item
We set
\[
\mathcal{T}^{(2)}\coloneqq \left\{(\tau,\ell,I) \mid (\tau,I),(\ell,I)\in \mathcal{T} \right\}.
\]
 \setlength{\leftskip}{-15pt}
\item
For 
$(\tau, \ell, I)\in \mathcal{T}^{(2)}$ and $a\in I$, 
we define $\delta_{\tau, \ell}\colon I\times I \to \mathbb{R}$ and $h_{\tau,\ell;a}\in C^\infty(\tau(I))$ 
respectively by 
\begin{align*}
\delta_{\tau, \ell}(t,s)
\coloneqq \int_{s}^t \left(\ell(u)-\ell(s)\right)\tau'(u) du,\qquad
h_{\tau,\ell;a}(r)\coloneqq 
\int_{a}^{\tau^{-1}(r)} \ell(t)\tau'(t) dt.
\end{align*}
\item
On $\mathcal{T}^{(2)}$, define an equivalence relation $ (\tau_1, \ell_1,I_1) \approx (\tau_2, \ell_2, I_2) $ to mean that 
$I_1=I_2$ and $\delta_{\tau_1, \ell_1}=\delta_{\tau_2, \ell_2}$.
\end{enumerate}
\end{definition}
\begin{lemma}\label{bij}
\begin{enumerate}
\setlength{\leftmargin}{23pt}
\setlength{\leftskip}{-15pt}
\item\label{1}
For $(\tau, \ell,I)\in \mathcal{T}^{(2)}$ and $a\in I$, it follows that $(h_{\tau,\ell;a}, \tau,I)\in \mathcal{G}$ and 
$\delta_{\tau,\ell}=d_{h_{\tau,\ell;a}, \tau}$ on $I\times I$ hence $(h_{\tau,\ell;a'}, \tau,I) \simeq (h_{\tau,\ell;a}, \tau,I)$ for any $a'\in I$.
\setlength{\leftskip}{-15pt}
\item\label{2}
A map from $ \mathcal{G}$ to $\mathcal{T}^{(2)}$ sending $(h,\tau, I)$ to $(\tau,\ell_{h,\tau}, I)$ induces a bijection from $ \mathcal{G}/\simeq$ to $\mathcal{T}^{(2)}/\approx$, 
where $d_{h,\tau}=\delta_{\tau, \ell_{h,\tau}}$.
\end{enumerate}
\end{lemma}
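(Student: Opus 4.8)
The plan is to prove part~\eqref{1} by direct computation and then deduce part~\eqref{2} from it together with Lemma~\ref{equivclass}. For part~\eqref{1}, fix $(\tau,\ell,I)\in\mathcal{T}^{(2)}$ and $a\in I$. First I would check that $h_{\tau,\ell;a}\in C^\infty(\tau(I))$ with $h_{\tau,\ell;a}''>0$: differentiating $h_{\tau,\ell;a}(r)=\int_a^{\tau^{-1}(r)}\ell(t)\tau'(t)\,dt$ with the chain rule and $(\tau^{-1})'(r)=1/\tau'(\tau^{-1}(r))$ gives $h_{\tau,\ell;a}'(r)=\ell(\tau^{-1}(r))$, so $h_{\tau,\ell;a}'\circ\tau=\ell$, and differentiating once more yields $h_{\tau,\ell;a}''(r)=\ell'(\tau^{-1}(r))/\tau'(\tau^{-1}(r))>0$ since $\ell'>0$ and $\tau'>0$. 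Hence $(h_{\tau,\ell;a},\tau,I)\in\mathcal{G}$ and moreover $\ell_{h_{\tau,\ell;a},\tau}=h_{\tau,\ell;a}'\circ\tau=\ell$. Plugging this into the integral formula~\eqref{corresponding} for $d_{h,\tau}$, namely $d_{h_{\tau,\ell;a},\tau}(t,s)=\int_s^t(\ell_{h_{\tau,\ell;a},\tau}(u)-\ell_{h_{\tau,\ell;a},\tau}(s))\tau'(u)\,du$, gives exactly $\delta_{\tau,\ell}(t,s)$. Since $d_{h_{\tau,\ell;a},\tau}=\delta_{\tau,\ell}$ does not depend on $a$, the equivalence $(h_{\tau,\ell;a'},\tau,I)\simeq(h_{\tau,\ell;a},\tau,I)$ for all $a'\in I$ is immediate from the definition of $\simeq$.

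For part~\eqref{2}, let $\Phi\colon\mathcal{G}\to\mathcal{T}^{(2)}$ send $(h,\tau,I)$ to $(\tau,\ell_{h,\tau},I)$; this is well-defined since $(\ell_{h,\tau},I)\in\mathcal{T}$ as noted after Definition~\ref{gauge}. I would first observe that $\Phi$ respects the two equivalence relations and is injective on classes: by the computation in the proof of Lemma~\ref{equivclass} one has $d_{h,\tau}=\delta_{\tau,\ell_{h,\tau}}$ (this is just~\eqref{corresponding} rewritten), so $(h_1,\tau_1,I)\simeq(h_2,\tau_2,I)$ means $I$ is common and $\delta_{\tau_1,\ell_{h_1,\tau_1}}=d_{h_1,\tau_1}=d_{h_2,\tau_2}=\delta_{\tau_2,\ell_{h_2,\tau_2}}$, i.e.\ $\Phi$ descends to a map $\bar\Phi\colon\mathcal{G}/\!\simeq\;\to\;\mathcal{T}^{(2)}/\!\approx$. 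Conversely, if $\Phi(h_1,\tau_1,I)\approx\Phi(h_2,\tau_2,I)$ then $d_{h_1,\tau_1}=\delta_{\tau_1,\ell_{h_1,\tau_1}}=\delta_{\tau_2,\ell_{h_2,\tau_2}}=d_{h_2,\tau_2}$, so $(h_1,\tau_1,I)\simeq(h_2,\tau_2,I)$; thus $\bar\Phi$ is injective. Surjectivity of $\bar\Phi$ is supplied by part~\eqref{1}: given $(\tau,\ell,I)\in\mathcal{T}^{(2)}$, pick any $a\in I$; then $(h_{\tau,\ell;a},\tau,I)\in\mathcal{G}$ and $\Phi(h_{\tau,\ell;a},\tau,I)=(\tau,\ell_{h_{\tau,\ell;a},\tau},I)=(\tau,\ell,I)$, so its class maps to the class of $(\tau,\ell,I)$. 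Hence $\bar\Phi$ is a bijection, and the identity $d_{h,\tau}=\delta_{\tau,\ell_{h,\tau}}$ has already been recorded.

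I do not expect a serious obstacle here; the only points requiring care are the chain-rule bookkeeping for $h_{\tau,\ell;a}'\circ\tau=\ell$ (using $\tau'>0$ to make $\tau^{-1}$ smooth) and being explicit that the equivalence relation $\simeq$ on $\mathcal{G}$ is pulled back from $\approx$ on $\mathcal{T}^{(2)}$ precisely through the identity $d_{h,\tau}=\delta_{\tau,\ell_{h,\tau}}$, so that well-definedness and injectivity of $\bar\Phi$ are two readings of the same equation. The mild subtlety worth flagging is that the map $\Phi$ at the level of representatives is not injective — different $a$ give $\simeq$-equivalent but distinct triples, and distinct $(h,\tau,I)$ in the sense of Lemma~\ref{equivclass}\eqref{D2} collapse — which is exactly why the statement is phrased in terms of the quotients $\mathcal{G}/\!\simeq$ and $\mathcal{T}^{(2)}/\!\approx$.
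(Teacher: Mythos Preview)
Your proof is correct and follows essentially the same approach as the paper: both compute $h_{\tau,\ell;a}'\circ\tau=\ell$ and $h_{\tau,\ell;a}''>0$ directly, invoke~\eqref{corresponding} to get $d_{h_{\tau,\ell;a},\tau}=\delta_{\tau,\ell}$, and then use this together with $d_{h,\tau}=\delta_{\tau,\ell_{h,\tau}}$ to establish the bijection on quotients. The only cosmetic difference is that the paper constructs the inverse map explicitly (sending $[(\tau,\ell,I)]$ to $[(h_{\tau,\ell;a},\tau,I)]$) rather than arguing injectivity and surjectivity of $\bar\Phi$ separately, but the underlying content is identical.
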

\begin{proof}
We first prove Claim~\eqref{1}.
Let $(\tau, \ell,I)\in \mathcal{T}^{(2)}$ and $a\in I$.
We calculate
\[
h''_{\tau,\ell;a}= (\ell \circ \tau^{-1})'= \left(\ell' \circ  \tau^{-1}\right)\cdot  \frac{d}{dr}\tau^{-1} >0,
\]
implying $(h_{\tau,\ell;a}, \tau,I)\in \mathcal{G}$.
We see that 
$
\ell_{h_{\tau,\ell;a},\tau}=h'_{\tau,\ell;a}\circ \tau=\ell,
$
and conclude from \eqref{corresponding} that
$d_{h_{\tau,\ell;a},\tau}=\delta_{\tau,\ell}$.
This yields $(h_{\tau,\ell;a'}, \tau,I)\simeq (h_{\tau,\ell;a}, \tau,I)$ for any $a'\in I$.
Thus Claim~\eqref{1} follows.

Let us prove Claim~\eqref{2}.
For $(h,\tau, I)\in \mathcal{G}$, 
we find $\ell_{h,\tau}\in C^{\infty}(I)$ and $\ell_{h,\tau}' >0$,
implying $(\tau,\ell_{h,\tau},I)\in \mathcal{T}^{(2)}$.
Moreover $d_{h,\tau}=\delta_{\tau,\ell_{h,\tau}}$ follows from~\eqref{corresponding}. 
By Claim~\eqref{1}, a map from $\mathcal{T}^{(2)}/\approx$ to $\mathcal{G}/\simeq$ sending 
the equivalence class of $(\tau,\ell, I)$ to that of $(h_{\tau,\ell;a}, \tau, I)$ 
is well-defined.
Thus  a map from  $\mathcal{G}/\simeq$  to $\mathcal{T}^{(2)}/\approx$ sending
the equivalence class of $(h,\tau, I)$ to that of $(\tau,\ell_{h,\tau}, I)$  is well-defined
and  its inverse map is given by a map sending the equivalence class of $(\tau,\ell, I)$ to that of $(h_{\tau,\ell;a}, \tau, I)$.
This proves Claim~\eqref{2}.
\end{proof}

There is an involution on $\mathcal{T}^{(2)}$ sending $(\tau,\ell, I)$ to $(\ell, \tau, I)$, 
where  $\delta_{\tau, \ell}(t,s)=\delta_{\ell, \tau}(s,t)$ holds.
This induces an involution on $\mathcal{G}/\simeq$, not $\mathcal{G}$ itself,  sending $(h,\tau,I)$ to $(h_{\ell_{h,\tau}, \tau;a},\ell_{h,\tau}, I)$.
If we consider a suitable subset of $\mathcal{G}$, the involution can be expressed as the conjugate.
This is roughly claimed in~\cite{Zhang}*{p.178}, \cite{NZ}*{p.88} and we  prove it rigorously.
For the sake, 
we briefly review some notions and results from convex analysis.
See~\cite{Rock} for more details.
\begin{definition}
Let $h\colon \mathbb{R}\to (-\infty, \infty]$.
\begin{enumerate}
 \setlength{\leftskip}{-15pt}
\item
The \emph{conjugate} of $h$  is the function $h^\ast\colon \mathbb{R}\to[-\infty,\infty]$ defined by 
\[
h^\ast(r^\ast)\coloneqq \sup\{ r^\ast r-h(r)\ |\ r\in \mathbb{R} \}.
\] 
\item
We say that $h$ is a \emph{convex function of Legendre type} if the following conditions hold.
\begin{itemize}
 \setlength{\leftskip}{-20pt}
\item
$h$ is  convex and  lower semicontinuous  on $\mathbb{R}$,
where by convention 
\begin{align*}
\infty\leq \infty,\qquad
a\cdot\infty=\infty,\qquad
b+\infty=\infty, \qquad
\text{for }a\in (0,\infty]
\text{ and  }b\in \mathbb{R}.
\end{align*}
\item
$h^{-1}(\mathbb{R}) \neq \emptyset$.
We denote by $J_h$ the interior of $h^{-1}(\mathbb{R})$.
\item 
$h$ is strictly convex on $J_h$.
\item
$h$ is differentiable in $J_h$ in addition to
\[
\qquad
\lim_{r \downarrow \inf J_h} h'(r)=-\infty
\quad \text{if $\inf J_h>-\infty$\quad and}\quad
\lim_{r \uparrow \sup J_h} h'(r)=\infty
\quad \text{if $\sup J_h<\infty$.}
\]
\end{itemize}
\end{enumerate}
\end{definition}
\begin{proposition}{\rm (\cite{Rock}*{Theorems~12.2, 26.4, 26.5})}\label{LC}
Let $h\colon \mathbb{R}\to (-\infty, \infty]$ be a convex function of Legendre type.
Then~$h^\ast$ is a convex function of Legendre type and $(h^\ast)^\ast=h$.
Moreover, 
$h'(J_h)=J_{h^\ast}$ and 
\[
h^\ast(h'(r))=rh'(r)-h(r)\quad \text{for }r\in J_{h}.
\]
\end{proposition}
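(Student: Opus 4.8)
The plan is to deduce all the assertions from the Fenchel--Moreau biconjugation theorem together with the duality between essential smoothness and essential strict convexity of convex functions; alternatively, one may simply invoke \cite{Rock}*{Theorems~12.2, 26.4, 26.5} verbatim, since the statement is quoted from there. I would organize the argument into four steps.

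First, $h^\ast$ is automatically convex and lower semicontinuous, being the pointwise supremum of the continuous affine functions $r^\ast \mapsto r^\ast r-h(r)$ indexed by $r\in h^{-1}(\mathbb{R})$. To see that $h^\ast$ is proper, fix $r_0\in J_h$; since $h$ is differentiable there, the affine function $s\mapsto h(r_0)+h'(r_0)(s-r_0)$ supports $h$ from below, so $h^\ast(h'(r_0))=r_0 h'(r_0)-h(r_0)\in\mathbb{R}$ and hence $J_{h^\ast}\neq\emptyset$. As $h$ is proper, convex and lower semicontinuous, the Fenchel--Moreau theorem gives $(h^\ast)^\ast=h$.

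Second --- and this is the crux --- one shows that $h^\ast$ is again of Legendre type. The mechanism is that a proper convex lower semicontinuous function is essentially smooth (differentiable on the interior of its domain, with slopes blowing up at the boundary) if and only if its conjugate is essentially strictly convex, and vice versa. A convex function of Legendre type is by definition both essentially smooth and essentially strictly convex, so applying the equivalence in both directions to $h$ shows that $h^\ast$ inherits both properties, i.e.\ $h^\ast$ is of Legendre type. The analytic content is an analysis of the subdifferential $\partial h$ as a monotone relation: strict convexity of $h$ on $J_h$ forces $\partial h$ to be single-valued where $h^\ast$ would otherwise fail to be differentiable, while the steepness conditions $\lim_{r\downarrow\inf J_h}h'(r)=-\infty$ and $\lim_{r\uparrow\sup J_h}h'(r)=\infty$ force the range of $\partial h$ to be an open interval, preventing $h^\ast$ from having affine pieces. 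I expect this step to be the main obstacle, as it requires the careful boundary-behaviour bookkeeping for convex functions underlying \cite{Rock}*{Section~26}.

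Third, for the gradient correspondence, fix $r\in J_h$ and set $r^\ast:=h'(r)$; the concave function $s\mapsto r^\ast s-h(s)$ has derivative $r^\ast-h'(s)$, which vanishes exactly at $s=r$ by the strict monotonicity of $h'$, so the supremum defining $h^\ast(r^\ast)$ is attained uniquely at $s=r$. This yields both $h'(J_h)\subseteq J_{h^\ast}$ and the Fenchel equality
\[
h^\ast(h'(r))=r h'(r)-h(r)\qquad\text{for }r\in J_h,
\]
and, by the attainment of the supremum at $s=r$, also $(h^\ast)'(h'(r))=r$, so $h'$ is injective on $J_h$ with left inverse $(h^\ast)'$. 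Applying the same reasoning to the Legendre-type function $h^\ast$ in place of $h$ and using $(h^\ast)^\ast=h$ gives the reverse inclusion $J_{h^\ast}\subseteq h'(J_h)$, whence $h'\colon J_h\to J_{h^\ast}$ is a bijection. This completes the proof.
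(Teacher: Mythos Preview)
Your proposal is correct, and in fact it does more than the paper: the paper provides no proof at all for this proposition, simply citing \cite{Rock}*{Theorems~12.2, 26.4, 26.5} as the source. Your sketch accurately reflects the content of those theorems (Fenchel--Moreau biconjugation, the essential smoothness/essential strict convexity duality, and the Legendre transform identity), so there is nothing to compare beyond noting that you have supplied an outline where the paper is content with a reference.
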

\begin{proposition}\label{mainthm1}{\rm(cf.\cite{Zhang}*{p.178}, \cite{NZ}*{p.88})}
Set 
\begin{align*}
\mathcal{G}_{\ast}& \coloneqq\left\{ (h|_{\tau(I)},\tau, I) \in \mathcal{G}\Bigm|
\text{$h$ is a convex function of Legendre type with  $J_h=\tau(I)$}
\right\},\\
\mathcal{T}^{(2)}_{\ast} &\coloneqq\left\{ (\tau, \ell,I)\in \mathcal{T}^{(2)} \Biggm|
\begin{tabular}{l}
$\displaystyle
\lim_{t \downarrow \inf I} \ell(t)=-\infty 
 \text{ if }\inf \tau(I)>-\infty$\text{ and} \\
$\displaystyle \lim_{t \uparrow \sup I} \ell(t)=\infty
\text{  if } \sup \tau(I)<\infty$
\end{tabular}
\right\}.
\end{align*}
There exists a bijection from $ \mathcal{G}_\ast/\simeq$ to $\mathcal{T}^{(2)}_\ast/\approx$
such that an involution  on $\mathcal{G}_\ast/\simeq$ sending  the equivalence class of $(h_{\tau(I)},\tau, I)$ to that of $(h^\ast|_{\ell(I)}, \ell, I)$ 
corresponds to that  on $\mathcal{T}^{(2)}/\approx$ sending  the equivalence class of $(\tau,\ell, I)$ to that of $(\ell, \tau, I)$.
\end{proposition}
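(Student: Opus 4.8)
The plan is not to build the bijection from scratch but to refine the one already produced in Lemma~\ref{bij}. Recall that Lemma~\ref{bij}\eqref{2} gives a bijection $\Phi\colon\mathcal{G}/\!\simeq\ \to\ \mathcal{T}^{(2)}/\!\approx$ with $\Phi([(h,\tau,I)])=[(\tau,\ell_{h,\tau},I)]$ and $\Phi^{-1}([(\tau,\ell,I)])=[(h_{\tau,\ell;a},\tau,I)]$ for any $a\in I$. The key steps are: (1) show that $\Phi$ restricts to a bijection of $\mathcal{G}_\ast/\!\simeq$ onto $\mathcal{T}^{(2)}_\ast/\!\approx$; (2) observe that the swap $\sigma\colon[(\tau,\ell,I)]\mapsto[(\ell,\tau,I)]$ is a well-defined involution on $\mathcal{T}^{(2)}_\ast/\!\approx$ — it descends to $\mathcal{T}^{(2)}/\!\approx$ by the identity $\delta_{\tau,\ell}(t,s)=\delta_{\ell,\tau}(s,t)$ recorded before the statement, and it preserves $\mathcal{T}^{(2)}_\ast$ because, using that $\tau,\ell$ are increasing, the conditions defining $(\tau,\ell,I)\in\mathcal{T}^{(2)}_\ast$ and $(\ell,\tau,I)\in\mathcal{T}^{(2)}_\ast$ are mutual contrapositives; (3) transport $\sigma$ through $\Phi$ to the involution $\iota\coloneqq\Phi^{-1}\circ\sigma\circ\Phi$ on $\mathcal{G}_\ast/\!\simeq$ and identify it. Steps (2)--(3) are then routine given Proposition~\ref{LC}, so the substance lies in step (1), especially its surjectivity half.

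For the easy direction of (1): if $(h|_{\tau(I)},\tau,I)\in\mathcal{G}_\ast$, then $h$ is a convex function of Legendre type with $J_h=\tau(I)$, and since $\tau$ is a continuous increasing bijection of $I$ onto $\tau(I)$ we have $\tau(t)\downarrow\inf\tau(I)$ as $t\downarrow\inf I$. Hence, whenever $\inf\tau(I)>-\infty$, the endpoint condition $\lim_{r\downarrow\inf J_h}h'(r)=-\infty$ in the definition of Legendre type forces $\lim_{t\downarrow\inf I}\ell_{h,\tau}(t)=\lim_{r\downarrow\inf\tau(I)}h'(r)=-\infty$, and symmetrically at the upper endpoint; together with $\ell_{h,\tau}'>0$ this gives $(\tau,\ell_{h,\tau},I)\in\mathcal{T}^{(2)}_\ast$. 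So $\Phi(\mathcal{G}_\ast/\!\simeq)\subset\mathcal{T}^{(2)}_\ast/\!\approx$, and injectivity is inherited from $\Phi$.

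For surjectivity, take $(\tau,\ell,I)\in\mathcal{T}^{(2)}_\ast$ and $a\in I$, and put $h\coloneqq h_{\tau,\ell;a}$, so that $h\in C^\infty(\tau(I))$, $h''>0$ and $h'\circ\tau=\ell$ by Lemma~\ref{bij}\eqref{1}. The idea is to extend $h$ to a convex function of Legendre type $\widetilde h\colon\mathbb{R}\to(-\infty,\infty]$ with $J_{\widetilde h}=\tau(I)$ and $\widetilde h|_{\tau(I)}=h$: assign to each \emph{finite} endpoint of $\tau(I)$ the monotone limit of $h$ there (which is $>-\infty$ since $h$, being convex, is bounded below near a finite endpoint by a tangent line) and set $\widetilde h\equiv\infty$ off $\overline{\tau(I)}$. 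One must then check that $\widetilde h$ is convex and lower semicontinuous, that $\widetilde h^{-1}(\mathbb{R})$ differs from $\tau(I)$ by at most its endpoints and hence has interior exactly $\tau(I)$, on which $\widetilde h$ is smooth and strictly convex, and that the two endpoint derivative conditions hold — this last point being precisely the hypothesis $(\tau,\ell,I)\in\mathcal{T}^{(2)}_\ast$ read through $\widetilde h'=\ell\circ\tau^{-1}$. Once this is done, $(\widetilde h|_{\tau(I)},\tau,I)=(h_{\tau,\ell;a},\tau,I)\in\mathcal{G}_\ast$ and $\Phi$ sends its class to $[(\tau,\ell,I)]$ by Lemma~\ref{bij}\eqref{1}. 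I expect this Legendre-type extension — keeping track of which endpoints of $\tau(I)$ are finite and verifying convexity, lower semicontinuity and the correct effective domain in each case — to be the only genuinely delicate part of the argument; everything else is bookkeeping.

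Finally, to evaluate $\iota$, fix $(h|_{\tau(I)},\tau,I)\in\mathcal{G}_\ast$ and set $\ell\coloneqq\ell_{h,\tau}=h'\circ\tau$, so $\ell(I)=h'(\tau(I))=h'(J_h)$. By Proposition~\ref{LC}, $h^\ast$ is of Legendre type with $J_{h^\ast}=h'(J_h)=\ell(I)$ and $(h^\ast)'=(h')^{-1}$ on $\ell(I)$; in particular $h^\ast|_{\ell(I)}\in C^\infty(\ell(I))$ with $(h^\ast)''>0$, so $(h^\ast|_{\ell(I)},\ell,I)\in\mathcal{G}_\ast$, and $\ell_{h^\ast|_{\ell(I)},\ell}=(h^\ast)'\circ\ell=(h')^{-1}\circ(h'\circ\tau)=\tau$. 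Hence $\Phi([(h^\ast|_{\ell(I)},\ell,I)])=[(\ell,\tau,I)]=\sigma(\Phi([(h|_{\tau(I)},\tau,I)]))$, so $\iota([(h|_{\tau(I)},\tau,I)])=[(h^\ast|_{\ell(I)},\ell,I)]$ — exactly the correspondence of involutions asserted. (Equivalently, $(h_{\ell,\tau;a},\ell,I)$ and $(h^\ast|_{\ell(I)},\ell,I)$ both have associated contrast function $\delta_{\ell,\tau}$ and are therefore $\simeq$-equivalent by Lemma~\ref{equivclass}.)
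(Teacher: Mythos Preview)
Your proposal is correct and follows essentially the same route as the paper: restrict the bijection of Lemma~\ref{bij}\eqref{2}, verify that $h_{\tau,\ell;a}$ extends to a convex function of Legendre type precisely under the $\mathcal{T}^{(2)}_\ast$ endpoint conditions, and invoke Proposition~\ref{LC} for the conjugate. The only notable difference is in identifying the involution: the paper computes $h^\ast$ explicitly by integration by parts to obtain $h^\ast(r^\ast)=\ell(a)\tau(a)+h_{\ell,\tau;a}(r^\ast)$ and hence $(h^\ast|_{\ell(I)},\ell,I)\simeq(h_{\ell,\tau;a},\ell,I)$, whereas you argue via $(h^\ast)'=(h')^{-1}$ (which follows by differentiating the identity $h^\ast(h'(r))=rh'(r)-h(r)$ in Proposition~\ref{LC}) to get $\ell_{h^\ast|_{\ell(I)},\ell}=\tau$ directly; both reach the same conclusion.
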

\begin{proof}
For  $(\tau, \ell,I)\in \mathcal{T}^{(2)}_\ast$ and  $a\in I$,
 define  $h\colon\mathbb{R}\to [-\infty, \infty]$  by
\begin{align*}
h(r)&\coloneqq 
\begin{cases}
h_{\tau,\ell;a} (r)&\text{if }r\in \tau(I),\\
\displaystyle
\liminf_{r'\in I, r'\to r} h_{\tau,\ell;a}(r) &\text{if }r\in \partial\tau(I),\\
\infty &\text{otherwise}.
\end{cases}
\end{align*}
It is trivial that $h^{-1}(\mathbb{R})\neq \emptyset$  with $J_h=\tau(I)$ and $h\in C^\infty(\tau(I))$.
By definition, we easily see the lower continuity of $h$  on $\mathbb{R}$.
Since we have $h''=h_{\tau,\ell;a}''>0$ on $J_h$,
$h$ is strictly  convex on $J_h$ and is convex  on $\mathbb{R}$.
If  $\inf J_h=\inf \tau(I)>-\infty$,
then it follows  from the assumptions that
\[
\lim_{r \downarrow \inf J_h} h'(r)
=
\lim_{t \downarrow \inf I} h'(\tau(t))
=
\lim_{t \downarrow \inf I} \ell(t)
=
-\infty
\]
hence $h(\inf \tau(I))>-\infty$.
Similarly, if $\sup J_{h}=\sup \tau(I)<\infty$, then 
we have
\[
\lim_{r \uparrow \sup J_h} h'(r)=\infty  \quad \text{and}\quad
h(\sup \tau(I))>-\infty.
\]
These show $h(\mathbb{R})\subset (-\infty, \infty]$.
Thus   $h$ is a convex function of Legendre type and $(h_{\tau(I)},\tau, I)\in \mathcal{G}_\ast$.
It follows from Proposition~\ref{LC} that
\[
J_{h^\ast}=h'(J_h)=(\ell\circ \tau^{-1})(\tau(I))=\ell(I).
\]
Moreover, for $r^\ast \in \ell(I)$, if we set $r\coloneqq \tau(\ell^{-1}(r^\ast))$, 
then we have  $h'(r)=r^\ast$ and again observe from Proposition~\ref{LC} that 
\begin{align*}
h^\ast(r^\ast)
&=h^\ast(h'(r))
=r h'(r)-h(r)\\
&=r\ell(\tau^{-1}(r))-\int_a^{\tau^{-1}(r)} \ell(t)\tau'(t) dt
=\ell(a)\tau(a)+  \int_a^{\ell^{-1}(r^\ast)} \ell'(t)\tau(t) dt\\
&=\ell(a)\tau(a)+h_{\ell, \tau;a}(r^\ast),
\end{align*}
which yields $(h^\ast|_{\ell(I)}, \ell, I)\simeq (h_{\ell, \tau;a},\ell, I)$ hence $(h^\ast|_{\ell(I)}, \ell, I)\in \mathcal{G}_\ast$.
These with Lemma~\ref{bij}\eqref{2}   show that 
a map from $ \mathcal{G}_\ast$ to $\mathcal{T}^{(2)}_\ast$ sending $(h|_{\tau(I)},\tau, I)$ to $(\tau,\ell_{h|_{\tau(I)},\tau}, I)$ induces a desired bijection from $ \mathcal{G}/\simeq$ to $\mathcal{T}^{(2)}/\approx$.
\end{proof}
%

\begin{remark}\label{underinv}
For $ (h|_{\tau(I)},\tau, I)\in \mathcal{G}_{\ast}$, 
we have 
\begin{align*}
m_{h|_{\tau(I)},\tau}=\ell'\cdot \tau'=m_{h^\ast|_{\ell(I)},\ell}, \qquad
\gamma_{h|_{\tau(I)},\tau}=\ell''\cdot \tau', \qquad
\gamma_{h^\ast|_{\ell(I)},\ell}=\tau''\cdot \ell'.
\end{align*}
This implies that 
the $(h|_{\tau(I)},\tau)$-divergence and  the $(h^\ast|_{\ell(I)},\ell)$-divergence induce the same Riemannian metric but they generally induce the different connections on $I$  
via \eqref{met}.
These two connections are said to be \emph{dual} to each other with respect to this  Riemannian metric 
(see for example~\cite{Shima}*{Definition~2.8}).
\end{remark}
\section{Deformed Exponential Family }\label{DEF}
An exponential family, especially a manifold of  Gaussian densities, together with the Kullback--Leibler divergence lays the foundation of information geometry.
We develop this foundation along an $(h,\tau)$-exponential family together with the $(h,\tau)$-divergence.

%
We briefly review the definition of an $(h,\tau)$-exponential family and its properties.
%
\begin{definition}
\label{exp}
Let $(h,\tau,I)\in \mathcal{G}$  and $\mathcal{M}$ be a set in $\mathcal{P}_I(\mu)$.
\begin{enumerate}
 \setlength{\leftskip}{-15pt}
\item
Define the smooth function $\chi_{h,\tau}\colon I \to \mathbb{R}$ by
\[
\chi_{h,\tau}\colon=\frac{1}{\ell'_{h,\tau}}.
\]
We extend the inverse function of $\ell_{h,\tau} \colon I \to \ell_{h,\tau}(I)$ to $\mathbb{R}$ by defining
\begin{align*}
\exp_{h,\tau}(u)\coloneqq \begin{cases}
\infty & \text{if } u\geq \sup \ell_{h,\tau}(I),\\
\ell_{h,\tau} ^{-1}(u) & \text{if } u\in \ell_{h,\tau}(I),\\
0 & \text{if }u \leq \inf \ell_{h,\tau}(I),
\end{cases}
\end{align*}
and call $\exp_{h,\tau}$ the \emph{$(h,\tau)$-exponential function}.
\item
An \emph{$(h,\tau)$-exponential representation of order $N$} on $\mathcal{M}$ is a quintuple $(U,\theta, T, c,\psi)$, consisting of
an open set $U$ in $\mathcal{M}$,
a map $\theta=(\theta^i)_{i=1}^N\colon U \to \mathbb{R}^N$,
a Borel map $T=(T_i)_{i=1}^N\colon \mathcal{X} \to\mathbb{R}^N$, 
a Borel function $c\colon \mathcal{X}\to \mathbb{R}$
and a function $\psi\colon U\to \mathbb{R}$ such that, 
for each $p\in U$, 
\begin{align}\label{exppp}
p(x)=\exp_{h,\tau}\left( \langle \theta(p), T(x) \rangle -c(x)-\psi(p) \right)
\quad\text{for $\mu$-a.e.\,${x} \in {\mathcal{X}}$.}
\end{align}
The function $\psi$ is called a local \emph{$(h,\tau)$-exponential normalization} on $U$.

In the case that $\mathcal{M}$ is a manifold, 
the quintuple $(U,\theta, T, c,\psi)$ is called an \emph{$(h,\tau)$-exponential coordinate representation} on $\mathcal{M}$
if $(U,\theta)$ becomes a coordinate system on~$\mathcal{M}$, $T,c$ are continuous on~$\mathcal{X}$, $\psi$ is smooth on $\mathcal{M}$
and,  for each $p\in U$, the relation~\eqref{exppp} holds on the whole of $x\in \mathcal{X}$.
\item
An $(h,\tau)$-exponential representation $(U,\theta, T, c,\psi)$ on $\mathcal{M}$ is \emph{minimal}
if its order is minimum among the order of any $(h,\tau)$-exponential representations $(\widetilde{U},\tilde{\theta}, \widetilde{T}, \tilde{c},\tilde{\psi})$ on~$\mathcal{M}$
such that $U \cap \widetilde{U}\neq \emptyset$.
\item
We say that $\mathcal{M}$ is an \emph{$(h,\tau)$-exponential family}
if $\mathcal{M}$ is a statistical model where each point in $\mathcal{M}$ has an $(h,\tau)$-exponential coordinate representation. 
\end{enumerate}
In the case of $(h,\tau)=( r\log r, \mathrm{id}_{I})$, 
the terminology \emph{$(h,\tau)$-exponential} is abbreviated to merely \emph{exponential}.
We sometimes abbreviate the terminology \emph{$(h,\tau)$-exponential} as $\exp_{h,\tau}$-
and use  \emph{deformed exponential} as a generic term for it.
\end{definition}
Note that 
\[
\exp_{h,\tau}'=\chi_{h,\tau}\circ \exp_{h,\tau} \quad \text{on }\ell_{h,\tau}(I).
\]

Definition~\ref{exp} is based on the definition of an exponential family due to Barndorff-Nielsen~\cite{BN1970}*{Chapter~5}. 
In the case that $(h,\tau, I) \not\simeq ( r\log r, \mathrm{id}_{I}, I )$, we should discuss a way of normalization.
Bashkirov~\cite{Bashkirov} proposed two ways of normalization, so-called \emph{$Z$-form} and \emph{$S$-form}, 
for R\'enyi distributions and Tsallis distributions.
On one hand, the $Z$-form uses the partition function.
On the other hand, the $S$-form is compatible with the minimality of deformed exponential representations and 
we adopt the $S$-form in Definition~\ref{exp}.
Since the deformed exponential normalization $\psi$ is determined from the  other ingredients of a deformed exponential representation $(U,\theta, T, c,\psi)$, 
it is enough to use the quadruple $(U,\theta, T, c)$ instead of the quintuple $(U,\theta, T, c,\psi)$.
However we clarify $\psi$ since the deformed exponential normalization is strongly related to a geometric structure, as seen in Theorem~\ref{mainthm2}.
Note that, for a deformed exponential coordinate representation $(U,\theta, T, c,\psi)$ on a statistical model in $\mathcal{P}_I(\mu)$, 
$\psi\in C^{\infty}(U)$ holds by Condition \eqref{C1}.

Throughout this section, we fix $(h,\tau,I)\in \mathcal{G}$.
We use  $\mathcal{P}$ and $\mathcal{M}$  to denote a set and an $n$-dimensional manifold in $\mathcal{P}_I(\mu)$, respectively.
\begin{remark}\label{remm}
\begin{enumerate}
\setlength{\leftmargin}{23pt}
\setlength{\leftskip}{-15pt}
\item
For an $\exp_{h,\tau}$-representation $(U,\theta, T, c,\psi)$ on  $\mathcal{P}$,
the positivity and the  $\mu$-integrability of $p\in U$ yields 
\[
 \langle \theta(p), T(x) \rangle -c(x)-\psi(p) \in \ell_{h,\tau}(I)
\quad \text{for $\mu$-a.e.}\, x\in {\mathcal{X}}.
 \]
\item\label{remm2}
 \setlength{\leftskip}{-15pt}
If $\mathcal{M}$ has  an atlas of $\exp_{h,\tau}$-coordinate representations, then $p\in C(\mathcal{X})$ holds for $p\in \mathcal{M}$.
Moreover, if $\mathcal{X}$ is compact, 
then $\mathbb{I}_{\phi}\in C^{\infty}(\mathcal{M})$ holds for $\phi\in C^\infty(I)$.
\item
If $\mathcal{M}$ is an $\exp_{h,\tau}$-family
then so is an $\exp_{\tilde{h}, \tilde{\tau}}$-family 
for $(h,\tau, I)\simeq (\tilde{h}, \tilde{\tau},I)$ since 
it follow from Lemma~\ref{equivclass} that 
there exist $a_1 \in \mathbb{R}$ and $\lambda>0$ that
\begin{align}\label{sim} 
\ell_{h,\tau}(t)=\lambda\ell_{\tilde{h}, \tilde{\tau}}(t)+a_1 \quad\text{for } t\in I,
\end{align}
or equivalently 
\[
\exp_{h,\tau}(u)=\exp_{\tilde{h}, \tilde{\tau}}(\lambda^{-1}(u-a_1))\quad \text{for }u\in \mathbb{R}
\]
Note that the relation~\eqref{sim} does note imply if $(h,\tau, I)\simeq (\tilde{h}, \tilde{\tau},I)$.
\item
If ${\mathcal{X}}$ is a finite set and $\mu$ is the counting measure on~${\mathcal{X}}$, 
then $\mathcal{P}(\mu)$ is an $\exp_{h,\tau}$family for any $(h,\tau, I)$ with $(0,1)\subset I$.
\end{enumerate}
\end{remark}
A similar argument in \cite{BN1970} yields the following.
We omit the proof.
\begin{proposition}{\rm (cf.\cite{BN1970}*{Theorems~5.1, 5.2})}\label{minimal}
For an $\exp_{h,\tau}$-representation $(U, \theta, T, c,\psi)$ of order $N$ on $\mathcal{P}$,
$(U, \theta, T, c,\psi)$ is minimal if and only if both of the following two conditions are valid.
\begin{itemize}
\item
$\theta^1, \ldots, \theta^N$ and $\mathbf{1}_{U}$ are linearly independent.
\item
$T_1, \ldots, T_N$ and $\mathbf{1}_{{\mathcal{X}}}$ are linearly independent.
\end{itemize}
For $j=1,2$, let $(U_j,\theta_j, T^j, c^j,\psi_j)$ be a minimal $\exp_{h,\tau}$-representation on $\mathcal{P}$ such that $U_1 \cap U_2 \neq \emptyset$.
Then there exist $v_1,v_2\in \mathbb{R}^N$ and $A\in \mathrm{GL}(N,\mathbb{R})$ such that 
\begin{align}\label{overlap}
\begin{aligned}
&\theta_1=A\theta_2+v_1, 
&&\psi_1=\psi_2-\langle \theta_2, v_2\rangle && \text{on $U_1 \cap U_2$,}\\
&T^2={}^{\mathsf{T}\!}AT^1+v_2,
&& c^2=c^1-\langle v_1, T^1\rangle 
&&\text{$\mu$-a.e.}
\end{aligned}
\end{align}
\end{proposition}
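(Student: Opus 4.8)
The plan is to verify the two bullet characterizations of minimality first, then build the transition maps on an overlap. For the ``only if'' direction of the first claim, suppose that $\theta^1,\dots,\theta^N,\mathbf{1}_U$ are linearly dependent, say $\sum_i a_i\theta^i + a_0 = 0$ on $U$ with $(a_i)$ not all zero; if some $a_i\neq 0$ for $i\geq 1$ I can solve for that $\theta^i$ in terms of the others and substitute into $\langle\theta(p),T(x)\rangle$, absorbing the resulting constant-in-$x$ term into $\psi$, which produces an $\exp_{h,\tau}$-representation of order $N-1$ on $U$, contradicting minimality. Symmetrically, if $T_1,\dots,T_N,\mathbf{1}_{\mathcal X}$ are linearly dependent, say $\sum_i b_i T_i + b_0 = 0$ $\mu$-a.e., I solve for one $T_i$ and absorb the constant term $b_0\langle\cdot\rangle$ into the function $c$, again lowering the order. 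For the ``if'' direction, one argues that under the two independence conditions, no lower-order representation can exist on any sub-open-set meeting $U$: a lower-order representation together with \eqref{exppp} (and the injectivity of $\ell_{h,\tau}$ on $I$, which lets one pass from equality of $p(x)$ to equality of the arguments of $\exp_{h,\tau}$ $\mu$-a.e.) would force $\langle\theta(p),T(x)\rangle$ to be expressible through fewer functions, contradicting the independence of the $T_i$'s against the independence of the $\theta^i$'s. This is the standard Barndorff-Nielsen argument and is what \cite{BN1970}*{Theorems~5.1, 5.2} supply.

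For the uniqueness statement, fix minimal representations $(U_j,\theta_j,T^j,c^j,\psi_j)$, $j=1,2$, with $U_1\cap U_2\neq\emptyset$. On $U_1\cap U_2$, the relation \eqref{exppp} gives, for $p\in U_1\cap U_2$ and $\mu$-a.e. $x$,
\[
\langle\theta_1(p),T^1(x)\rangle - c^1(x) - \psi_1(p)
= \langle\theta_2(p),T^2(x)\rangle - c^2(x) - \psi_2(p),
\]
using again that $\exp_{h,\tau}$ is injective where it takes values in $I$. Rearranging,
\[
\langle\theta_1(p),T^1(x)\rangle - \langle\theta_2(p),T^2(x)\rangle
= c^1(x) - c^2(x) + \psi_1(p) - \psi_2(p).
\]
I would then differentiate (or take finite differences) in $p$ along $U_1\cap U_2$: the right side contributes only through $\psi_1-\psi_2$, so the left side's $x$-dependence, after subtracting its value at a basepoint $p_0$, must be a function of $x$ alone times nothing, i.e. one obtains $\langle\theta_1(p)-\theta_1(p_0),T^1(x)\rangle = \langle\theta_2(p)-\theta_2(p_0),T^2(x)\rangle$ plus a term depending only on $p$. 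Because $T^1_1,\dots,T^1_N,\mathbf 1_{\mathcal X}$ are linearly independent and likewise for $T^2$, both families are bases of the same $(N{+}1)$-dimensional space of functions spanned by either set together with $\mathbf 1_{\mathcal X}$; comparing coefficients yields a fixed invertible linear relation $T^2 = {}^{\mathsf T}\!A\,T^1 + v_2$ $\mu$-a.e. for some $A\in\mathrm{GL}(N,\mathbb R)$ and $v_2\in\mathbb R^N$ (the constant vector $v_2$ absorbing the $\mathbf 1_{\mathcal X}$-component). Substituting this back and matching the $x$-dependent parts forces $\theta_1 = A\theta_2 + v_1$ on $U_1\cap U_2$ with $v_1$ determined by the $c$-discrepancy, giving $c^2 = c^1 - \langle v_1,T^1\rangle$ $\mu$-a.e.; finally the $p$-only leftover term yields $\psi_1 = \psi_2 - \langle\theta_2,v_2\rangle$ on $U_1\cap U_2$.

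The main obstacle is the interchange of ``differentiate in $p$'' with the a.e.-in-$x$ identity and the bookkeeping that separates the $x$-part from the $p$-part cleanly: one must be careful that the linear-independence hypotheses are used at the level of $L^1(\mu)$ (or of functions defined $\mu$-a.e.) rather than pointwise, and that the coefficient-matching step genuinely produces a single matrix $A$ valid uniformly in $p$ rather than a $p$-dependent one — this is exactly where minimality (both independence conditions simultaneously) is essential. Since all of this is carried out in \cite{BN1970}*{Chapter~5} in the classical exponential case and the only modification here is composing with the fixed increasing function $\ell_{h,\tau}$ before applying those arguments, I would simply invoke that reference, which is why the proof is omitted.
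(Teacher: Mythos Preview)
Your proposal is correct and matches the paper's approach: the paper itself omits the proof, noting only that ``a similar argument in \cite{BN1970} yields the following,'' and your sketch is precisely the standard Barndorff-Nielsen reduction (eliminate a redundant $\theta^i$ or $T_i$ to contradict minimality, then use injectivity of $\ell_{h,\tau}$ on the overlap to equate the affine arguments and match coefficients). There is nothing to add beyond what you wrote.
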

For a deformed exponential representation $(U,\theta, T, c,\psi)$, we show the injectivity of~$\theta$.
In the setting of Barndorff-Nielsen~\cite{BN1970}*{Chapter~5}, 
$\theta$ is not assumed to be injective
and $(U,\theta, T, c,\psi)$ is said to be \emph{canonical} if $\theta$  is injective.
\begin{lemma}\label{ordn}
For an $\exp_{h,\tau}$-representation $(U,\theta, T, c,\psi)$ on $\mathcal{M}$, 
$\theta$ is injective on $U$. 
Moreover, then order of $(U,\theta, T, c,\psi)$ is at least $n$ if $\theta$ is continuous on $U$.
\end{lemma}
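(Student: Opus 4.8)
The plan is to exploit the defining relation~\eqref{exppp} together with the injectivity of $\exp_{h,\tau}$ on the interval $\ell_{h,\tau}(I)$. First I would observe that, by Remark~\ref{remm}, for $p\in U$ the value $\langle\theta(p),T(x)\rangle-c(x)-\psi(p)$ lies in $\ell_{h,\tau}(I)$ for ($\mu$-a.e., and in the coordinate case all) $x\in\mathcal{X}$, and that $\exp_{h,\tau}$ restricted to $\ell_{h,\tau}(I)=\ell_{h,\tau}$-image is a bijection (the inverse of $\ell_{h,\tau}$, which is strictly increasing since $\ell_{h,\tau}'>0$). Hence for $p,q\in U$ the equality $p=q$ in $\mathcal{P}_I(\mu)$ forces
\[
\langle\theta(p),T(x)\rangle-\psi(p)=\langle\theta(q),T(x)\rangle-\psi(q)
\quad\text{for $\mu$-a.e.\ }x\in\mathcal{X},
\]
since the $c(x)$ terms cancel. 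Rearranging, $\langle\theta(p)-\theta(q),T(x)\rangle=\psi(p)-\psi(q)$ for $\mu$-a.e.\ $x$, i.e.\ the affine function $x\mapsto\langle\theta(p)-\theta(q),T(x)\rangle$ is $\mu$-a.e.\ constant.

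To conclude $\theta(p)=\theta(q)$ from this, I would use Condition~\eqref{C3} of the statistical model together with the structure of the deformed exponential representation. The cleanest route: take $V\in\mathfrak{X}(\mathcal{M})$ and compute $V_p(e_x)$ by differentiating~\eqref{exppp}; one gets $V_p(e_x)=\exp_{h,\tau}'(\cdots)\cdot\big(\langle V_p\theta,T(x)\rangle-V_p\psi\big)$, where $\exp_{h,\tau}'=\chi_{h,\tau}\circ\exp_{h,\tau}>0$. If $\theta$ were not injective we could, after the reduction above, find $p\neq q$ in $U$ with $\theta(p)\ne\theta(q)$ impossible, or more directly find a tangent vector detecting the degeneracy; but the concrete statement "$\theta$ injective on $U$" follows already from the displayed a.e.-constancy, because $p\ne q$ as elements of $L^1(\mu)$ and $\exp_{h,\tau}$ injective on $\ell_{h,\tau}(I)$ would then force the arguments to agree $\mu$-a.e., giving the affine relation; one then notes that if $\theta(p)=\theta(q)$ the relation $\psi(p)=\psi(q)$ follows and $p=q$, so conversely $p\ne q$ gives $\theta(p)\ne\theta(q)$. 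I would phrase this as a clean contrapositive.

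For the second assertion, suppose $\theta$ is continuous on $U$ and the order $N$ satisfies $N<n$. Since $(U,\theta)$ is an $(h,\tau)$-exponential coordinate representation, $(U,\theta)$ is in particular a chart, so $\theta\colon U\to\theta(U)\subset\mathbb{R}^N$ is a homeomorphism onto an open subset of $\mathbb{R}^n$; but an open subset of $\mathbb{R}^n$ cannot be homeomorphic to a subset of $\mathbb{R}^N$ with $N<n$ by invariance of domain. Hence $N\ge n$. Alternatively, and more in keeping with the statistical-model framework, one differentiates~\eqref{exppp}: the coordinate vector fields $\partial_1,\dots,\partial_n$ act on $e_x$ through $\partial_i e_x(p)=\exp_{h,\tau}'(\cdots)\,(\langle\partial_i\theta,T(x)\rangle-\partial_i\psi)$, so the $n$ functions $x\mapsto\partial_i e_x(p)$ lie in the span of $T_1,\dots,T_N,\mathbf 1_{\mathcal X}$; if $N<n$ this span has dimension $\le N+1$, but one can trim by the normalization (the constant $\partial_i\psi$), and invoking Condition~\eqref{C3} one derives that the $\partial_i|_p$ are linearly dependent, contradicting that they form a basis of $T_p\mathcal{M}$. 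I expect the first route (invariance of domain) to be the shortest; the only mild subtlety — and the main thing to be careful about — is matching the a.e.\ statements to the everywhere statements in the coordinate-representation case, i.e.\ keeping track of the $\mu$-null set $\mathcal{N}$ from Condition~\eqref{C5} when passing between $\mu$-a.e.\ identities in $\mathcal{P}_I(\mu)$ and pointwise identities used to extract tangent information.
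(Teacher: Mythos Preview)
Your argument for injectivity has the logical direction reversed and leaves the crucial step unjustified. You begin by assuming $p=q$ and deriving the affine relation $\langle\theta(p)-\theta(q),T(x)\rangle=\psi(p)-\psi(q)$; but this proves nothing, since $p=q$ as points of $U\subset\mathcal{P}_I(\mu)$ already gives $\theta(p)=\theta(q)$ trivially. What you must show is the converse: assuming $\theta(p)=\theta(q)$, conclude $p=q$. You do eventually assert ``if $\theta(p)=\theta(q)$ the relation $\psi(p)=\psi(q)$ follows,'' but you give no reason for this, and it is precisely the content of the lemma. The paper's argument supplies the missing idea: if $\theta(p)=\theta(q)$ and, say, $\psi(p)\le\psi(q)$, then by monotonicity of $\exp_{h,\tau}$ one has $p\ge q$ $\mu$-a.e.; since both integrate to $1$, this forces $p=q$ $\mu$-a.e. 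Your proposed fallback via Condition~(C3) and differentiation is not available here: the lemma concerns a bare $\exp_{h,\tau}$-representation on a manifold $\mathcal{M}$, with no assumption that $\mathcal{M}$ is a statistical model or that $\theta,\psi$ are differentiable.

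For the order bound your instinct to use invariance of domain is correct and matches the paper, but you misread the hypotheses: the lemma is about an $\exp_{h,\tau}$-\emph{representation}, not a \emph{coordinate} representation, so $(U,\theta)$ is not assumed to be a chart. You only know $\theta$ is continuous and (by the first part) injective. The fix is the one the paper uses: take an actual chart $(U',\xi)$ on $\mathcal{M}$ with $U\cap U'\neq\emptyset$, note that $i_N\circ\theta\circ\xi^{-1}\colon\xi(U\cap U')\to\mathbb{R}^n$ is continuous and injective, and apply invariance of domain to conclude its image is open in $\mathbb{R}^n$, which is impossible if $N<n$ since the image lies in the proper subspace $i_N(\mathbb{R}^N)$.
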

\begin{proof}
For $p,p'\in U$, assume 
\[
\theta_\ast\coloneqq \theta(p)=\theta(p'),
\qquad
\psi(p) \leq \psi(p'). 
\]
Since $\exp_{h,\tau}$ is an increasing function, $p \geq p'$ holds $\mu$-a.e., it follows
\[
0\leq 
\int_{{\mathcal{X}}} \left( p(x)-p'(x) \right) d\mu(x)
=
\int_{{\mathcal{X}}} p(x)d\mu(x)-\int_{{\mathcal{X}}} p'(x) d\mu(x)
=1-1=0.
\]
This leads to 
$\psi(p)=\psi(p')$ hence $p = p'$ $\mu$-a.e.,
that is $\theta$ is injective.

Assume the continuity of $\theta$ on $U$ and the order $N$ of $(U,\theta, T, c,\psi)$ is less than $n$.
Denote by $i_N$ the inclusion map from $\mathbb{R}^N$ to $\mathbb{R}^{n}$.
Let $(U',\xi)$ be a coordinate system on $\mathcal{M}$ with $U\cap U'\neq \emptyset$.
Since $i_{N}\circ \theta \circ \xi^{-1}\colon \xi(U\cap U')\to \mathbb{R}^{n}$ is injective and continuous,
the invariance of domain theorem implies that $i_{N}(\theta(U\cap U'))$ is open in $\mathbb{R}^{n}$,
which is a contradiction.
\end{proof}

\begin{proposition}\label{global}
If $\mathcal{M}$ has  an atlas of $\exp_{h,\tau}$-coordinate representations, 
then $\mathcal{M}$ has a minimal global $\exp_{h,\tau}$-coordinate representation.
\end{proposition}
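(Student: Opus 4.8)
The plan is to build a single global coordinate representation by gluing the charts of a given atlas, exploiting that the transition maps between $\exp_{h,\tau}$-coordinate representations are affine.

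First I would record that every $\exp_{h,\tau}$-coordinate representation $(U,\theta,T,c,\psi)$ on $\mathcal{M}$ is minimal of order $n=\dim\mathcal{M}$: since $\theta(U)$ is open in $\mathbb{R}^n$, the functions $\theta^1,\dots,\theta^n,\mathbf{1}_U$ are linearly independent; and if $T_1,\dots,T_n,\mathbf{1}_{\mathcal{X}}$ were linearly dependent one could absorb one $T_i$ into the others and obtain an $\exp_{h,\tau}$-representation on $U$ of order $n-1$ whose parameter map is still continuous, contradicting Lemma~\ref{ordn}; hence Proposition~\ref{minimal} gives minimality. Consequently, for any two overlapping charts $(U_\alpha,\theta_\alpha,T^\alpha,c^\alpha,\psi_\alpha)$ and $(U_\beta,\theta_\beta,T^\beta,c^\beta,\psi_\beta)$ of a fixed atlas, Proposition~\ref{minimal} provides $A_{\alpha\beta}\in\mathrm{GL}(n,\mathbb{R})$ and $v_{\alpha\beta},w_{\alpha\beta}\in\mathbb{R}^n$ with $\theta_\alpha=A_{\alpha\beta}\theta_\beta+v_{\alpha\beta}$, $T^\beta={}^{\mathsf T\!}A_{\alpha\beta}T^\alpha+w_{\alpha\beta}$, $c^\beta=c^\alpha-\langle v_{\alpha\beta},T^\alpha\rangle$ and $\psi_\alpha=\psi_\beta-\langle\theta_\beta,w_{\alpha\beta}\rangle$ on $U_\alpha\cap U_\beta$ (the identities for $T^\alpha,T^\beta,c^\alpha,c^\beta$ hold $\mu$-a.e., hence everywhere by continuity and $\mathrm{supp}\,\mu=\mathcal{X}$).

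Next I would extract a global affine hull. From the transition identities, on $U_\alpha\cap U_\beta$ the $(n+1)$-dimensional subspace $V_\alpha\coloneqq\mathrm{span}\{T^\alpha_1,\dots,T^\alpha_n,\mathbf{1}_{\mathcal{X}}\}$ of $C(\mathcal{X})$ equals $V_\beta$, and $c^\alpha-c^\beta\in\mathrm{span}\{T^\alpha_1,\dots,T^\alpha_n\}$; a chain argument using connectedness of $\mathcal{M}$ then shows that the affine subspace $\mathcal{A}\coloneqq -c^\alpha+V_\alpha$ of $C(\mathcal{X})$ does not depend on $\alpha$. Consider the globally defined map $F\colon\mathcal{M}\to C(\mathcal{X})$, $F(p)\coloneqq\ell_{h,\tau}\circ p$, which is a continuous function since $p\in C(\mathcal{X})$ takes values in $I$ (Remark~\ref{remm}). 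On each chart $F(p)=\langle\theta_\alpha(p),T^\alpha\rangle-c^\alpha-\psi_\alpha(p)\mathbf{1}_{\mathcal{X}}$, so $F(\mathcal{M})\subset\mathcal{A}$. Now fix an index $\alpha_0$, put $c\coloneqq c^{\alpha_0}$ and $T\coloneqq T^{\alpha_0}$; since $\{T_1,\dots,T_n,\mathbf{1}_{\mathcal{X}}\}$ is a basis of $V\coloneqq V_{\alpha_0}$, every element $F(p)+c\in V$ has a unique expansion $F(p)+c=\langle\theta(p),T\rangle-\psi(p)\mathbf{1}_{\mathcal{X}}$, and this defines $\theta\colon\mathcal{M}\to\mathbb{R}^n$ and $\psi\colon\mathcal{M}\to\mathbb{R}$. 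I would then verify that $(\mathcal{M},\theta,T,c,\psi)$ is a minimal global $\exp_{h,\tau}$-coordinate representation: (i) comparing the expansion with $F(p)=\langle\theta_\alpha(p),T^\alpha\rangle-c^\alpha-\psi_\alpha(p)\mathbf{1}_{\mathcal{X}}$ on each $U_\alpha$ and using the constant invertible change of basis between $\{T^\alpha_i,\mathbf{1}_{\mathcal{X}}\}$ and $\{T_i,\mathbf{1}_{\mathcal{X}}\}$ shows $\theta|_{U_\alpha}$ is an affine reparametrization of $\theta_\alpha$, hence a coordinate system, and $\psi|_{U_\alpha}$ is smooth; (ii) $\theta$ is globally injective, for if $\theta(p)=\theta(p')$ then $F(p)-F(p')$ is a constant multiple of $\mathbf{1}_{\mathcal{X}}$, and strict monotonicity of $\ell_{h,\tau}$ with $\int p\,d\mu=\int p'\,d\mu=1$ forces $F(p)=F(p')$, hence $p=p'$ (the argument of Lemma~\ref{ordn}); thus $\theta$, being locally a coordinate system and globally an injective open map, is a global coordinate system and $\psi\in C^\infty(\mathcal{M})$; (iii) by construction $\ell_{h,\tau}(p(x))=\langle\theta(p),T(x)\rangle-c(x)-\psi(p)$ for all $x$, so applying $\exp_{h,\tau}$ recovers \eqref{exppp} pointwise; (iv) $\theta^1,\dots,\theta^n,\mathbf{1}_{\mathcal{M}}$ are linearly independent because $\theta(\mathcal{M})$ is open, and $T_1,\dots,T_n,\mathbf{1}_{\mathcal{X}}$ are linearly independent by the first step, so Proposition~\ref{minimal} yields minimality.

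The main obstacle I anticipate is the second step: verifying carefully that the affine transition relations propagate, via connectedness of $\mathcal{M}$, to a globally well-defined $(n+1)$-dimensional affine subspace $\mathcal{A}$ containing $F(\mathcal{M})$, and then that the globally constructed $\theta$ restricts on each chart to an affine reparametrization of $\theta_\alpha$ so that it genuinely defines a coordinate system. The remaining points reduce to the injectivity argument already used in Lemma~\ref{ordn} together with elementary linear-algebra bookkeeping.
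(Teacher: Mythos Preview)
Your proposal is correct and follows a genuinely different route from the paper's proof. The paper argues by iterated merging: using paracompactness it reduces to showing that two overlapping $\exp_{h,\tau}$-coordinate representations $(U_1,\theta_1,T^1,c^1,\psi_1)$ and $(U_2,\theta_2,T^2,c^2,\psi_2)$ can be glued into one on $U_1\cup U_2$, defining $\theta$ and $\psi$ piecewise via the affine transition data from Proposition~\ref{minimal} and checking injectivity by a direct case analysis; minimality is then read off from Lemma~\ref{ordn}. Your approach is instead intrinsic: you embed $\mathcal{M}$ into $C(\mathcal{X})$ by $F(p)=\ell_{h,\tau}\circ p$, observe that the affine transition relations force all the local affine spans $-c^\alpha+\mathrm{span}\{T^\alpha_1,\dots,T^\alpha_n,\mathbf{1}_{\mathcal{X}}\}$ to coincide (via connectedness), and then define $(\theta,\psi)$ globally as the coefficients of $F(p)+c$ in one fixed basis. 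This sidesteps the inductive bookkeeping of the merging process and makes transparent why no monodromy obstruction arises: the global $(\theta,\psi)$ are characterized by a property of $F(p)$ that does not reference any particular chain of charts. The paper's approach is more elementary in that it never leaves finite-dimensional data, while yours gives a cleaner conceptual picture at the cost of working inside the infinite-dimensional space $C(\mathcal{X})$. Your verification that the restriction $\theta|_{U_\alpha}$ is an \emph{invertible} affine reparametrization of $\theta_\alpha$ (needed for $(\mathcal{M},\theta)$ to be a coordinate system) goes through because both $\{T^\alpha_i,\mathbf{1}_{\mathcal{X}}\}$ and $\{T_i,\mathbf{1}_{\mathcal{X}}\}$ are bases of the same $(n{+}1)$-dimensional space sharing the vector $\mathbf{1}_{\mathcal{X}}$, so the change-of-basis matrix is block lower-triangular with invertible $n\times n$ block.
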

\begin{proof}
Since each covering of $\mathcal{M}$ has a countable locally finite open refinement due to the paracompactness of $\mathcal{M}$,
to prove the existence of a global $\exp_{h,\tau}$-coordinate representation,
it is enough to show that two overlapping $\exp_{h,\tau}$-coordinate representations can be merged.
For $j=1,2$, let $(U_j, \theta_j, T^j, c^j,\psi_j)$ be an $\exp_{h,\tau}$-coordinate representation on $\mathcal{M}$
such that $U_1 \cap U_2 \neq \emptyset$.
By Proposition~\ref{minimal}, there exist $v_1,v_2\in \mathbb{R}^n$ and $A\in \mathrm{GL}(n,\mathbb{R})$ such that~\eqref{overlap} holds.
Set $U\coloneqq U_1\cup U_2$.
If we define $\theta\colon U \to \mathbb{R}^n$ and $\psi\colon U\to \mathbb{R}$ respectively by 
\[
\theta(p)\coloneqq \begin{cases}\theta_1(p) &\text{if }p\in U_1, \\ A\theta_2(p) +v_1 &\text{if }p\in U\setminus U_1, \end{cases}
\qquad
\psi(p)\coloneqq \begin{cases}\psi_1(p) &\text{if }p\in U_1, \\ \psi_2(p)-\langle \theta_2(p), v_2\rangle &\text{if }p\in U\setminus U_1,\end{cases}
\]
then $\theta$ and $\psi$ are smooth on $U$.
Moreover, $\theta$ is injective on $U$.
Indeed, if there exists $p_j \in U_j \setminus ( U_1 \cap U_2)$ for $j=1,2$ such that $\theta(p_1)=\theta(p_2)$, 
then we have
\begin{align*}
 p_1(x)
 &=\exp_{h,\tau} \left( \langle \theta(p_1), T^1(x) \rangle-c^1(x)-\psi_1(p_1)\right) \\
 &=\exp_{h,\tau} \left( \langle \theta(p_2), T^2(x) \rangle-c^2(x)-\left( \psi_1(p_1)+\langle\theta_2(p_2), v_2\rangle \right) \right), \\
 p_2(x)
 &=\exp_{h,\tau} \left( \langle \theta(p_2), T^2(x) \rangle-c^2(x)-\psi_2(p_2) \right)
\end{align*}
for $x\in \mathcal{X}$.
The monotonicity of $\exp_{h,\tau}$ together with the condition $p_1,p_2\in \mathcal{P}(\mu)$ leads to $p_1=p_2$,
which contradicts the assumption that $p_j \in U_j \setminus ( U_1 \cap U_2)$.
Thus $\theta$ is injective on $U$ and $(U, \theta, T^1,c^1,\psi)$ is an $\exp_{h,\tau}$-coordinate representation on~$\mathcal{M}$.
Its minimality immediately follows from Lemma~\ref{ordn} and the proof is archived.
\end{proof}
We consider a condition for a manifold in $\mathcal{P}(\mu)$ to be a statistical model.
\begin{lemma}\label{lem}
Let $(\mathcal{M},\theta,T,c,\psi)$ be  an $\exp_{h,\tau}$-coordinate representation on $\mathcal{M}$.
\begin{enumerate}
 \setlength{\leftskip}{-15pt}
 \item\label{461}
$\mathcal{M}$ satisfies Conditions~\eqref{C5}--\eqref{C3}.
\item\label{462}
 If either $\mathcal{X}$ is compact or $(h,\tau)= (r\log r, \mathrm{id}_I)$,  then $\mathcal{M}$ satisfies Condition~\eqref{C4}.
\item\label{463}
Assume Condition~\eqref{C4}.
If $\mathbb{I}_{\chi_{h,\tau}}\in \mathbb{R}^\mathcal{M}$,
then, for $p\in \mathcal{M}$ and $1\leq i,j\leq n$, 
\begin{align}\label{derideri}
\begin{split}
\partial_i\psi |_p 
=\frac{\mathbb{I}_{T_i \chi_{h,\tau}}(p)}{\mathbb{I}_{\chi_{h,\tau}}(p)},\qquad
\partial_i\partial_j\psi |_p 
=
\frac{1}{ \mathbb{I}_{\chi_{h,\tau}}(p)}\cdot\int_{{\mathcal{X}}} \partial_i e_x|_p \cdot \partial_j e_x|_p \cdot 
\frac{ \chi_{h,\tau}'(p(x))}{\chi_{h,\tau}(p(x))} d\mu(x).
\end{split}
\end{align}
\end{enumerate}
\end{lemma}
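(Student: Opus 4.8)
The plan is to work locally in the given coordinate system $(\mathcal{M},\theta)$, so that the defining relation $p(x)=\exp_{h,\tau}(\langle\theta(p),T(x)\rangle-c(x)-\psi(p))$ holds for every $x\in\mathcal{X}$. For Claim~\eqref{461}, Condition~\eqref{C5} is immediate: two points of $\mathcal{M}$ with the same evaluation functions have the same densities, hence are equal, so the family $(e_x)_x$ separates points; combined with continuity of $T,c$ and smoothness of $\theta,\psi$, the evaluation functions generate the manifold topology (the map $p\mapsto(p(x))_x$ factors through $\theta$, which is a homeomorphism onto its image by Lemma~\ref{ordn} and invariance of domain). Condition~\eqref{C1} follows because $e_x=\exp_{h,\tau}\circ(\langle\theta(\cdot),T(x)\rangle-c(x)-\psi)$ is a composition of smooth maps (using $\exp_{h,\tau}\in C^\infty(\ell_{h,\tau}(I))$ and Remark~\ref{remm} to see the argument stays in $\ell_{h,\tau}(I)$). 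Condition~\eqref{C2} is the statement that $x\mapsto\partial^{\alpha}e_x|_p$ is Borel for each multi-index $\alpha$; differentiating the explicit formula shows these derivatives are polynomials in $T(x)$, in derivatives of $\psi$, and in derivatives of $\exp_{h,\tau}$ evaluated along $p(x)$, all of which are Borel in $x$ since $T$ is Borel. Condition~\eqref{C3} says a tangent vector killing all $e_x$ must vanish: if $V_p(e_x)=0$ for all $x$, then from $\partial_i e_x|_p=\exp_{h,\tau}'(\cdots)\,(T_i(x)-\partial_i\psi|_p)$ and the minimality-type independence of $T_1,\dots,T_n,\mathbf 1_\mathcal{X}$ (Proposition~\ref{minimal}, after reducing to a minimal representation), one deduces $V^i_p=0$ for all $i$.

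For Claim~\eqref{462}, Condition~\eqref{C4} asks that $x\mapsto V_p(e_x)$ and $x\mapsto V_p(We_x)$ lie in $L^1(\mu)$ with vanishing integral. The integrals vanish automatically once integrability is known, because $\mathbb{I}\equiv 1$ on $\mathcal{P}(\mu)$ so differentiating $\int_\mathcal{X} e_x\,d\mu=1$ formally gives zero — the real content is justifying the interchange of $\int$ and $\partial$, i.e. producing a dominating function. If $\mathcal{X}$ is compact, then $T,c$ are bounded, $p$ ranges in a compact interval of $I$ as $x$ varies, and all the relevant derivatives of $\exp_{h,\tau}$ are bounded there, so $V_p(e_x)$ and $V_p(We_x)$ are bounded functions on a finite-measure space ($\mu$ is Radon, $\mathcal{X}$ compact) and the interchange is routine. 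If instead $(h,\tau)=(r\log r,\mathrm{id}_I)$, then $\exp_{h,\tau}(u)=e^{u-1}$ and $e_x(p)=\exp(\langle\theta(p),T(x)\rangle-c(x)-1-\psi(p))=p(x)$; the first-order derivative is $\partial_i e_x|_p=p(x)(T_i(x)-\partial_i\psi|_p)$ and the second-order one is a similar polynomial in $T(x)$ times $p(x)$, and here one invokes the standard exponential-family fact that $\int_\mathcal{X} p(x)|T(x)|^k\,d\mu(x)<\infty$ locally uniformly in $\theta$ (finiteness of moments of an exponential family, as in \cite{BN1970}), which supplies the dominating function; the vanishing of the integral then follows from differentiating the identity $\int_\mathcal{X} p\,d\mu=1$.

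For Claim~\eqref{463}, the idea is simply to differentiate the normalization condition. From $\tau(p(x))=$ (image under $\tau$ of) $\exp_{h,\tau}(\cdots)$ it is cleaner to differentiate $\ell_{h,\tau}(p(x))=\langle\theta(p),T(x)\rangle-c(x)-\psi(p)$; applying $\partial_i$ gives $\ell_{h,\tau}'(p(x))\,\partial_i e_x|_p=T_i(x)-\partial_i\psi|_p$, i.e. $\partial_i e_x|_p=\chi_{h,\tau}(p(x))(T_i(x)-\partial_i\psi|_p)$. Integrating against $d\mu$ and using Condition~\eqref{C4} (which gives $\int_\mathcal{X}\partial_i e_x|_p\,d\mu=0$) together with $\mathbb{I}_{\chi_{h,\tau}}\in\mathbb{R}^\mathcal{M}$ yields the first formula $\partial_i\psi|_p=\mathbb{I}_{T_i\chi_{h,\tau}}(p)/\mathbb{I}_{\chi_{h,\tau}}(p)$. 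For the second, differentiate $\partial_i e_x|_p=\chi_{h,\tau}(p(x))(T_i(x)-\partial_i\psi|_p)$ again in $\partial_j$: the product rule produces $\chi_{h,\tau}'(p(x))\,\partial_j e_x|_p\,(T_i(x)-\partial_i\psi|_p)-\chi_{h,\tau}(p(x))\,\partial_i\partial_j\psi|_p$, and then substitute $T_i(x)-\partial_i\psi|_p=\partial_i e_x|_p/\chi_{h,\tau}(p(x))$; integrating over $\mathcal{X}$, the term $\int_\mathcal{X}\partial_i\partial_j e_x|_p\,d\mu=0$ (second part of Condition~\eqref{C4}) and solving for $\partial_i\partial_j\psi|_p$ gives exactly the stated identity. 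The main obstacle is Claim~\eqref{462} in the exponential case: one must carefully produce a local $L^1$ dominating function for the second derivatives, which is where the moment finiteness of exponential families enters and where the hypothesis $(h,\tau)=(r\log r,\mathrm{id}_I)$ is genuinely used rather than compactness of $\mathcal{X}$; the other parts are formal manipulations of the defining identity.
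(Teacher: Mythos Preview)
Your proposal is essentially correct and follows the same approach as the paper's proof: compute $\partial_i e_x = \chi_{h,\tau}(p(x))(T_i(x)-\partial_i\psi)$ and its second derivative, use linear independence of $T_1,\dots,T_n,\mathbf 1_{\mathcal X}$ (Proposition~\ref{minimal}) for \eqref{C3}, justify differentiation under the integral for \eqref{C4}, and read off \eqref{derideri} from $\int\partial_i e_x\,d\mu=\int\partial_i\partial_j e_x\,d\mu=0$.

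The one place where your plan is too quick is Condition~\eqref{C5}. Saying ``$(e_x)_x$ separates points and factors through $\theta$, which is a homeomorphism onto its image'' does not by itself show that the evaluation-induced topology is \emph{at least as fine} as the manifold topology: invariance of domain applies to maps between spaces of the same finite dimension, not to the map $\theta(\mathcal M)\to\mathbb R^{\mathcal X}$. The paper handles this direction by an explicit argument: for $p\in U\in\mathcal U_{\mathcal M}$ it considers the evaluation-open sets $U_{x,\delta}=(\ell_{h,\tau}\circ e_x)^{-1}(\ell_{h,\tau}(p(x))-\delta,\ell_{h,\tau}(p(x))+\delta)$, notes that on $U_{x,\delta}$ one has $|\langle\theta(p')-\theta(p),T(x)\rangle-(\psi(p')-\psi(p))|<\delta$, and then uses the linear independence of $T_1,\dots,T_n,\mathbf 1_{\mathcal X}$ to force $|\theta(p')-\theta(p)|$ small as $\delta\downarrow 0$, hence $U_{x,\delta}\subset U$ for some $\delta$. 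You should incorporate this step (or an equivalent one) rather than declaring \eqref{C5} immediate.

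For \eqref{462} in the exponential case, your appeal to the standard moment-finiteness of exponential families is fine; the paper simply writes out the dominating function explicitly by bounding the difference quotient $(e_x(p_\varepsilon)-e_x(p))/\varepsilon$ in terms of $p_{\pm\delta}(x)$ for a fixed small $\delta>0$, which is the same content.
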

\begin{proof}
Since $T$ and $c$ are continuous on $\mathcal{X}$, 
we have 
\[
 \langle \theta(p), T(x) \rangle -c(x)-\psi(p) \in \ell_{h,\tau}(I) 
 \quad \text{for }x\in \mathcal{X}.
\] 
This with the smoothness of $\theta, \psi$ and  $\exp_{h,\tau}$ ensures that, for each $x\in \mathcal{X}$, 
\[
p\mapsto e_x(p)=\exp_{h,\tau} \left(\langle \theta(p), T(x) \rangle -c(x)-\psi(p) \right)
\]
is smooth on $\mathcal{M}$.
Moreover,  its derivatives of any order at $p\in \mathcal{M}$ is continuous, in particular Borel on $\mathcal{X}$.
Thus~$\mathcal{M}$ satisfies Conditions~\eqref{C1} and \eqref{C2}.
By the relation $\exp'_{h,\tau} =\chi_{h,\tau} \circ \exp_{h,\tau}$, we have   
\begin{align}\label{diff}
\begin{split}
\partial_i e_x
&=\left(T_i(x)-\partial_i \psi \right)\cdot (\chi_{h,\tau} \circ e_x),\\
\partial_i\partial_j e_x
&=-\partial_i \partial_j\psi \cdot (\chi_{h,\tau} \circ e_x)
+
\partial_i e_x \cdot
\partial_j e_x \cdot
\frac{\chi_{h,\tau}'}{\chi_{h,\tau}}\circ e_x.
\end{split}
\end{align}
which leads to
\[
V_p(e_x)
=
\sum_{i=1}^{n} V_p^i \left(T_i(x)-\partial_i\psi|_p \right) \chi_{h,\tau}(p(x))
\quad
\text{for $V\in \mathfrak{X}(\mathcal{M})$ and $p\in \mathcal{M}$}.
\]
Since $\chi_{h,\tau}>0$ on $I$ and $T_1, \ldots, T_n$ and $\mathbf{1}_{{\mathcal{X}}}$ are linearly independent by Proposition~\ref{minimal},
if $V_p(e_x)=0$ for $\mu$-a.e.\,${x}\in {\mathcal{X}}$ (hence all $x\in \mathcal{X}$), then $V_p^i=0$ holds for each $i$, that is, Condition \eqref{C3} holds.
Let 
$\mathcal{U}_\mathcal{M}$ be the topology of the manifold $\mathcal{M}$
and 
$\mathcal{U}$ be the induced topology of the set $\mathcal{M}$ by the family of functions  $(e_x)_{x\in \mathcal{X}}$, respectively.
Since $e_x$ is continuous on $\mathcal{M}$  for each $x\in \mathcal{X}$, 
we have $\mathcal{U}\subset \mathcal{U}_{\mathcal{M}}$.
For $U\in \mathcal{U}_{\mathcal{M}}$ and $p\in U$, there exists $\varepsilon>0$ such that 
\[
\{ p'\in \mathcal{M}\mid |\theta(p')-\theta(p)|<\varepsilon  \}\subset U.
\]
For  $x\in \mathcal{X}$ and $\delta>0$, setting 
\[
U_{x,\delta}\coloneqq (\ell_{h,\tau}\circ e_x)^{-1}\left(\ell_{h,\tau}( p(x))-\delta,  \ell_{h,\tau}(p(x))+\delta   \right)\in \mathcal{U},
\] 
we have
\[
|\ell_{h,\tau}(p'(x))-\ell_{h,\tau}(p(x))|
=\left| \langle \theta(p')-\theta(p),T(x)\rangle-(\psi(p')-\psi(p))\right|<\delta
\quad\text{for }p'\in U_{x,\delta}.
\]
Since $T_1, \ldots, T_n$ and $\mathbf{1}_{{\mathcal{X}}}$ are linearly independent by Proposition~\ref{minimal},
it holds that 
\[
\limsup_{\delta \downarrow 0}\sup_{p'\in U_{x,\delta}} |\theta(p')-\theta(p)|=0.
\]
This implies that there exists $\delta>0$ such that
\[
U_{x,\delta}\subset \{ p'\in \mathcal{M}\mid |\theta(p')-\theta(p)|<\varepsilon  \}\subset U
\]
hence $U\in \mathcal{U}$.
Thus Claim~\eqref{461} follows.

To prove Claim~\eqref{462}, 
it is enough to show Condition~\eqref{C4} for $V=\partial_i, W=\partial_j$.
On one hand, 
if $\mathcal{X}$ is compact, then $\mu(\mathcal{X})<\infty$ and \eqref{diff} together with the dominated convergence theorem implies Condition~\eqref{C4}.
On the other hand, assume $(h,\tau)= (r\log r, \mathrm{id}_I)$.
For $p\in \mathcal{M}$, there exists $\delta>0$ such that $\theta(p)+\varepsilon e_i \in \theta(\mathcal{M})$ for $\varepsilon \in (-2\delta,2\delta)$.
Setting $p_\varepsilon \coloneqq \theta^{-1}(\theta(p)+\varepsilon e_i)$,
for $0<|\varepsilon|<\delta$, we have
\begin{align*}
\int_{\mathcal{X}}\frac{e_x(p_\varepsilon)-e_x(p)}{\varepsilon}d\mu(x)
&=
\int_{\mathcal{X}}p(x)\frac{e^{\varepsilon T_i(x)+\psi(p)-\psi(p_\varepsilon)}-1}{\varepsilon}d\mu(x),\\
\left|\frac{e^{\varepsilon T_i(x)+\psi(p)-\psi(p_\varepsilon)}-1}{\varepsilon}\right|
&\leq 
\left|\frac{e^{\varepsilon T_i(x)}-1}{\varepsilon}\right|e^{\psi(p)-\psi(p_\varepsilon)}
+
\left|\frac{e^{-\psi(p_\varepsilon)}-e^{-\psi(p)}}{\varepsilon}\right|e^{\psi(p)}\\
&\leq 
\frac{e^{\delta T_i(x)}+e^{-\delta T_i(x)}}{\delta}\max_{\epsilon \in [-\delta,\delta]} e^{\psi(p)-\psi(p_{\epsilon})}
+
\max_{\epsilon \in [-\delta,\delta]} \left|e^{-\psi(p_\epsilon)}\partial_i \psi|_{p_{\epsilon}} \right| e^{\psi(p)}\\
&\leq  
e^{\psi(p)}\cdot \max_{\epsilon \in [-\delta,\delta]} e^{-\psi(p_{\epsilon})}
\left(\frac{e^{\delta T_i(x)}+e^{-\delta T_i(x)}}{\delta}
+
\max_{\epsilon \in [-\delta,\delta]} \left|\partial_i \psi|_{p_{\epsilon}} \right|
\right).
\end{align*}
Since the right-hand side of
\begin{align*}
&p(x)\cdot e^{\psi(p)} \cdot \max_{\epsilon \in [-\delta,\delta]} e^{-\psi(p_{\epsilon})}
\left(\frac{e^{\delta T_i(x)}+e^{-\delta T_i(x)}}{\delta}
+
\max_{\epsilon \in [-\delta,\delta]} \left|\partial_i \psi|_{p_{\epsilon}} \right|\right)\\
\leq&
\max_{\epsilon,\epsilon' \in [-\delta,\delta]} e^{\psi(p_{\epsilon'})-\psi(p_{\epsilon})}\cdot \left(
\frac{p_\delta(x)+p_{-\delta}(x)}{\delta} +p(x) \cdot 
\max_{\epsilon \in [-\delta,\delta]} \left| \partial_i \psi|_{p_\epsilon}\right|
\right)
\end{align*}
is $\mu$-integrable, 
the dominated convergence theorem implies
\begin{align*}
0=\partial_i|_p \int_{\mathcal{X}} e_x \mu(x)=\int_{\mathcal{X}} \partial_i e_x |_pd\mu(x).
\end{align*}
Similarly, we have
\[
0=\partial_i|_p \int_{\mathcal{X}} \partial_j e_x \mu(x)=\int_{\mathcal{X}} \partial_i \partial_j e_x |_p d\mu(x),
\]
and Claim~\eqref{462} holds.

Finally, we prove Claim~\eqref{463}.
Assume Condition~\eqref{C4}.
We observe from \eqref{diff} that
\begin{align*}
0
&=\int_{{\mathcal{X}}} \partial_i e_x |_p d\mu(x)
=\int_{{\mathcal{X}}} \left(T_i(x)-\partial_i \psi|_p \right) \chi_{h,\tau}(p) d\mu,\\
0
&=\int_{{\mathcal{X}}} \partial_i\partial_j e_x |_p d\mu(x)
=
\int_{{\mathcal{X}}}
\left(-\partial_i \partial_j\psi|_p \cdot \chi_{h,\tau}(p(x)) + \partial_i e_x|_p \cdot \partial_j e_x|_p \cdot 
\frac{ \chi_{h,\tau}'(p(x))}{\chi_{h,\tau}(p(x))}\right) d\mu(x).
\end{align*}
If $\mathbb{I}_{\chi_{h,\tau}}\in \mathbb{R}^\mathcal{M}$,
then \eqref{derideri} holds, that is, Claim~\eqref{463} is proved.
\end{proof}
By the proof of Lemma~\ref{lem}\eqref{462}, 
if $(h, \tau)=(r\log r, \mathrm{id}_I)$,
then we have $\psi\in C^{\infty}(\mathcal{M})$ and $\mathbb{I}_{T}, \mathbb{I}_{\langle T, T\rangle}\in \mathbb{R}^{\mathcal{M}}$,
which is mentioned in~\cite{BN1970}*{p.5.6}.
Moreover, it is stated in \cite{BN1970}*{Theorem~5.5} that $(\mathcal{M},\mathbb{I}_T)$ becomes a coordinate system
since  $(\partial_i \partial_j \psi)_{1\leq i,j\leq n}$ is positive definite on $\mathcal{M}$.
This positivity also holds for a deformed exponential family.
\begin{corollary}\label{psipos}
Let  $\mathcal{M}$ be an $\exp_{h,\tau}$-family equipped with 
an $\exp_{h,\tau}$-coordinate representation $(\mathcal{M},\theta,T,c,\psi)$.
If  $\mathbb{I}_{\chi_{h,\tau}}\in \mathbb{R}^\mathcal{M}$and $\chi'_{h,\tau}>0$ on $I$, 
then $(\partial_i \partial_j \psi)_{1\leq i,j\leq n}$ is positive definite on~$\mathcal{M}$. 
\end{corollary}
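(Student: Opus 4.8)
The plan is to read the Hessian $(\partial_i\partial_j\psi)$ off the second identity in~\eqref{derideri} and verify positive definiteness of the resulting quadratic form pointwise. Since $\mathcal{M}$, being an $\exp_{h,\tau}$-family, is a statistical model, Condition~\eqref{C4} holds, and together with the standing hypothesis $\mathbb{I}_{\chi_{h,\tau}}\in\mathbb{R}^{\mathcal{M}}$ this places us in the situation of Lemma~\ref{lem}\eqref{463}. Contracting the second identity in~\eqref{derideri} with $\xi^i\xi^j$, for $\xi=(\xi^i)_{i=1}^n\in\mathbb{R}^n$ and $p\in\mathcal{M}$, yields
\[
\sum_{i,j=1}^n\xi^i\xi^j\,\partial_i\partial_j\psi|_p
=\frac{1}{\mathbb{I}_{\chi_{h,\tau}}(p)}\int_{\mathcal{X}}\Bigl(\sum_{i=1}^n\xi^i\,\partial_i e_x|_p\Bigr)^{2}\frac{\chi_{h,\tau}'(p(x))}{\chi_{h,\tau}(p(x))}\,d\mu(x),
\]
so everything reduces to showing this integral is strictly positive whenever $\xi\neq0$.

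Nonnegativity is immediate: $\chi_{h,\tau}=1/\ell'_{h,\tau}>0$ on $I$, so $\mathbb{I}_{\chi_{h,\tau}}(p)=\int_{\mathcal{X}}\chi_{h,\tau}(p(x))\,d\mu(x)>0$, while $\chi'_{h,\tau}>0$ on $I$ by hypothesis; hence the integrand is nonnegative and the quadratic form is $\geq0$. For strict positivity, fix $\xi\neq0$ and use the first line of~\eqref{diff} to rewrite $\sum_i\xi^i\partial_i e_x|_p=\bigl(\sum_i\xi^i T_i(x)-\sum_i\xi^i\partial_i\psi|_p\bigr)\chi_{h,\tau}(p(x))$, a continuous function of $x$ (as $T$, $c$, and $p$ are continuous, the last by Remark~\ref{remm}\eqref{remm2}). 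The coordinate representation $(\mathcal{M},\theta,T,c,\psi)$ has order $n=\dim\mathcal{M}$, hence is minimal by Lemma~\ref{ordn}, so Proposition~\ref{minimal} shows $T_1,\dots,T_n$ and $\mathbf{1}_{\mathcal{X}}$ are linearly independent. Consequently $x\mapsto\sum_i\xi^i T_i(x)$ is non-constant, so $x\mapsto\sum_i\xi^i\partial_i e_x|_p$ is continuous and not identically zero; since $\operatorname{supp}\mu=\mathcal{X}$ it is nonzero on a nonempty open set, and the integral is therefore strictly positive. (Alternatively, a vanishing integral would force $V_p(e_x)=0$ for all $x\in\mathcal{X}$ with $V=\sum_i\xi^i\partial_i$, contradicting Condition~\eqref{C3}.)

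I expect no genuine obstacle: the corollary is a one-step consequence of Lemma~\ref{lem}\eqref{463}. The only matters needing a word of justification are the minimality of the given coordinate representation (so that the linear independence of Proposition~\ref{minimal} is available) and the elementary fact that a nonnegative continuous integrand which is not identically zero has strictly positive integral against a measure of full support.
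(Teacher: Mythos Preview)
Your proof is correct and follows essentially the same route as the paper: both use the second identity in~\eqref{derideri} to reduce to positive definiteness of the integral $\int_{\mathcal{X}}(T_i-\partial_i\psi|_p)(T_j-\partial_j\psi|_p)\,\chi_{h,\tau}(p)\chi'_{h,\tau}(p)\,d\mu$, invoke the positivity of $\chi_{h,\tau}\chi'_{h,\tau}$, and appeal to the linear independence of $T_1,\dots,T_n,\mathbf{1}_{\mathcal{X}}$ from Proposition~\ref{minimal}. Your version is slightly more explicit in justifying minimality via Lemma~\ref{ordn} and in spelling out the full-support argument for strict positivity, but the substance is the same.
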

\begin{proof}
By \eqref{derideri}, it is enough to show that 
\begin{equation}\label{pos}
\int_{{\mathcal{X}}} \partial_i e_x|_p \cdot \partial_j e_x|_p \cdot 
\frac{ \chi_{h,\tau}'(p(x))}{\chi_{h,\tau}(p(x))} d\mu(x)
=
\int_{{\mathcal{X}}} (T_i-\partial_i\psi|_p)(T_j-\partial_j\psi|_p) \chi_{h,\tau}(p) \chi_{h,\tau}'(p)d\mu
\end{equation}
determines a  positive definite symmetric matrix.
Since $T_1, \ldots, T_N$ and $\mathbf{1}_{{\mathcal{X}}}$ are linearly independent by Proposition~\ref{minimal},
$((T_i-\partial_i\psi|_p )(T_j-\partial_j\psi|_p ) )_{1\leq i,j\leq n}$ is positive definite on~$\mathcal{X}$.
This with the positivity of $\chi_{h,\tau}\chi'_{h,\tau}$ on $I$ ensures
the positive definiteness of the symmetric matrix determined by~\eqref{pos}.
\end{proof}
\section{Proof of Theorem~\ref{mainthm2}}\label{pfthm2}
Throughout this section, we fix $(h,\tau,I)\in \mathcal{G}$ 
and let $\mathcal{M}$ be an $n$-dimensional statistical model   in $\mathcal{P}_I(\mu)$.
To prove Theorem~\ref{mainthm2}, 
we discuss a condition for~$(h,\tau,I)$ such that the $(h,\tau)$-divergence on $\mathcal{M}$ induces a Riemannian metric and a connection via \eqref{met}.
\begin{definition}\label{compatible}
We say that $\mathcal{M}$ is \emph{$(h,\tau)$-compatible} 
if, for $V,W,Z\in \mathfrak{X}(\mathcal{M})$, 
\begin{align*}
\widetilde{V}_{(p, p)}( \widehat{W} \left(d_{h,\tau}\circ (e_x, e_x) \right), 
\widehat{V}_{(p, p)}(\widehat{W}\widetilde{Z} \left(d_{h,\tau}\circ (e_x, e_x)\right)
\in L^1(\mu)
\end{align*}
holds for $p\in \mathcal{M}$ 
and the functions 
\begin{align*}
p\mapsto g^{h,\tau}_p(V,W)
&\coloneqq-\int_{{\mathcal{X}}}\widetilde{V}_{(p, p)}( \widehat{W} \left(d_{h,\tau}\circ (e_x, e_x) \right) d\mu(x),\\
p\mapsto g^{h,\tau}_p(\nabla^{h,\tau}_ZV,W)
&\coloneqq -\int_{{\mathcal{X}}}
\widehat{Z}_{(p, p)}(\widehat{V}\widetilde{W} \left(d_{h,\tau}\circ (e_x, e_x)\right)
d\mu(x)
\end{align*}
are smooth on $\mathcal{M}$.
\end{definition}
If $\mathcal{M}$ is $(h,\tau)$-compatible,
then so is $(\tilde{h},\tilde{\tau})$-compatible for $(h,\tau,I)\simeq (\tilde{h},\tilde{\tau},I)$.
The manifold of Gaussian densities  is $(h,\tau)$-compatible 
for $(h,\tau,(0,\infty))\simeq (r\log r, \mathrm{id}_{(0,\infty)}, (0,\infty))$.
In the case that ${\mathcal{X}}$ is a finite set and $\mu$ is the counting measure on ${\mathcal{X}}$,
then $\mathcal{P}(\mu)$ is $(h,\tau)$-compatible for any $(h,\tau,I)$ if $(0,1)\subset I$.
Note that, even on an $(h,\tau)$-compatible statistical model, the $(h,\tau)$-divergence is not necessarily a contrast function.

\begin{remark}\label{Fisher}
In the case of $(h,\tau)=(r\log r, \mathrm{id}_{I})$,
$g^{h,\tau}$ and $\nabla^{h,\tau}$ are called the \emph{Fisher metric}
and the \emph{exponential connection},
respectively.
\end{remark}

We show that $g^{h,\tau}$ and $\nabla^{h,\tau}$ are indeed a Riemannian metric and a connection.
\begin{lemma}\label{metcon}
If $\mathcal{M}$ is $(h,\tau)$-compatible,
then $g^{h,\tau}$ is a Riemannian metric and $\nabla^{h,\tau}$ is a torsion-free connection on~$\mathcal{M}$, respectively.
\end{lemma}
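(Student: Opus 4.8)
The plan is to compute the two maps $g^{h,\tau}$ and $\nabla^{h,\tau}$ explicitly in a local coordinate system coming from an $\exp_{h,\tau}$-coordinate representation (which exists around every point since $\mathcal{M}$ is a statistical model carrying an atlas of such representations — this is what ``$(h,\tau)$-exponential family'' means, but here we only assume $(h,\tau)$-compatibility, so I should instead work with an arbitrary coordinate system $(U,\xi)$ and the derivative formulas for $d_{h,\tau}\circ(e_x,e_x)$). First I would record, using the definitions of $\widetilde{V},\widehat{W}$ and the identities $\widetilde{V}\widehat{W}=\widehat{W}\widetilde{V}$, that
\[
\widetilde{\partial_i}_{(p,p)}\bigl(\widehat{\partial_j}(d_{h,\tau}\circ(e_x,e_x))\bigr)
= \frac{\partial^2 d_{h,\tau}}{\partial t\,\partial s}\Bigl|_{(p(x),p(x))}\,\partial_i e_x|_p\,\partial_j e_x|_p
= -\,m_{h,\tau}(p(x))\,\partial_i e_x|_p\,\partial_j e_x|_p,
\]
where I have used that the first-order terms drop because $\partial_s d_{h,\tau}|_{(t,t)}=0$. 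Integrating against $\mu$ gives
\[
g^{h,\tau}_p(\partial_i,\partial_j)=\int_{\mathcal{X}} m_{h,\tau}(p(x))\,\partial_i e_x|_p\,\partial_j e_x|_p\,d\mu(x),
\]
which is manifestly symmetric in $i,j$, and $(h,\tau)$-compatibility guarantees the integrand is in $L^1(\mu)$ and the result is smooth on $\mathcal{M}$; it is tensorial because the integrand is. Positive-definiteness follows from $m_{h,\tau}=h''(\tau)\,(\tau')^2>0$ on $I$ together with Condition~\eqref{C3}: if $\sum_i a^i\partial_i e_x|_p=0$ for $\mu$-a.e.\ $x$ then the vector $\sum_i a^i\partial_i|_p$ kills every $e_x$, hence vanishes. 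So $g^{h,\tau}$ is a Riemannian metric.

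For $\nabla^{h,\tau}$ I would similarly differentiate once more: the key point is that $\widehat{Z}_{(p,p)}(\widehat{V}\widetilde{W}(d_{h,\tau}\circ(e_x,e_x)))$ expands, via the product rule, into a term built from second derivatives of $e_x$ paired with $m_{h,\tau}$ and a term built from products of first derivatives of $e_x$ paired with the third-order mixed partial $-\gamma_{h,\tau}$ of $d_{h,\tau}$ at the diagonal. Writing this out in the coordinate basis and using the formula for $g^{h,\tau}$ just derived, I would solve for the Christoffel symbols $\Gamma^{h,\tau}{}^k_{ij}$ by inverting $g^{h,\tau}$ (nonsingular by the previous paragraph). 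The two maps $g^{h,\tau}$ and $\nabla^{h,\tau}$ are smooth by the compatibility hypothesis, so $\nabla^{h,\tau}$ is a bona fide affine connection once we check the defining axioms (additivity, $C^\infty$-linearity in $Z$, the Leibniz rule in $V$); all of these are inherited from the corresponding linearity/derivation properties of the vector-field actions on $\mathcal{M}\times\mathcal{M}$ and are routine. For torsion-freeness, I would show $g^{h,\tau}_p(\nabla^{h,\tau}_ZV-\nabla^{h,\tau}_VZ,W)=g^{h,\tau}_p([Z,V],W)$ for all $W$; since $g^{h,\tau}$ is nondegenerate this yields $\nabla^{h,\tau}_ZV-\nabla^{h,\tau}_VZ=[Z,V]$. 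The identity reduces, after integrating, to the symmetry
\[
\widehat{Z}_{(p,p)}(\widehat{V}\widetilde{W}F)-\widehat{V}_{(p,p)}(\widehat{Z}\widetilde{W}F)=\widehat{[Z,V]}_{(p,p)}(\widetilde{W}F)
\]
with $F=d_{h,\tau}\circ(e_x,e_x)$, which is just the fact that $\widehat{\cdot}$ is a Lie-algebra homomorphism $\mathfrak{X}(\mathcal{M})\to\mathfrak{X}(\mathcal{M}\times\mathcal{M})$ together with the commuting of $\widehat{\cdot}$ and $\widetilde{\cdot}$ — exactly the bracket computation already used in the proof of Proposition~\ref{equiv}.

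The main obstacle is bookkeeping rather than conceptual: carefully expanding the third-order derivative $\widehat{Z}_{(p,p)}(\widehat{V}\widetilde{W}(d_{h,\tau}\circ(e_x,e_x)))$, keeping track of which terms survive on the diagonal (using $\partial_s d_{h,\tau}|_{(t,t)}=0$ and $\partial_s^2 d_{h,\tau}|_{(t,t)}=m_{h,\tau}$, $\partial_t\partial_s d_{h,\tau}|_{(t,t)}=-m_{h,\tau}$, etc.), and then confirming that the candidate $\Gamma^{h,\tau}$ extracted this way really does not depend on the choice of extensions of the $\partial_i$ to vector fields — this independence is precisely why one needs $(h,\tau)$-compatibility, which makes $g^{h,\tau}(\nabla^{h,\tau}_ZV,W)$ a well-defined smooth function in the first place. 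Once these derivative identities are in hand, both the metric and connection axioms and the torsion-free identity follow mechanically.
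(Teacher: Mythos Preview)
Your proposal is correct and follows essentially the same route as the paper: compute the integrands explicitly (the paper records precisely your formulas as equation~\eqref{cal}, working with general vector fields $V,W,Z$ rather than coordinate fields), then verify symmetry and $C^\infty$-linearity for $g^{h,\tau}$, positive-definiteness via $m_{h,\tau}>0$ and Condition~\eqref{C3}, the connection axioms, and torsion-freeness via the bracket identity $g^{h,\tau}_p(\nabla^{h,\tau}_ZV,W)-g^{h,\tau}_p(\nabla^{h,\tau}_VZ,W)=g^{h,\tau}_p([Z,V],W)$. Your intermediate step of extracting Christoffel symbols by inverting $g^{h,\tau}$ is an unnecessary detour---the paper simply verifies the axioms directly on the trilinear form $(Z,V,W)\mapsto g^{h,\tau}(\nabla^{h,\tau}_ZV,W)$---but this is a minor stylistic difference, not a gap.
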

\begin{proof}
Fix $V,W,Z \in \mathfrak{X}(\mathcal{M})$ and $p\in \mathcal{M}$.
A direct calculation gives
\begin{align}\label{cal}
\begin{split}
-\widetilde{V}_{(p, p)} \left( \widehat{W} \left(d_{h,\tau}\circ (e_x, e_x) \right) \right)
=&
 V_p(e_x) W_p(e_x) m_{h,\tau} (p(x)),\\
-
\widehat{Z}_{(p,p)}\left(\widehat{V}\widetilde{W} \left(d_{h,\tau}\circ (e_x, e_x) \right)\right)
=&
Z_p(e_x)
V_p(e_x)
W_p(e_x) \gamma_{h,\tau}(p(x))\\
&+ Z_p(V (e_x)) W_p(e_x) m_{h,\tau}(p(x)),
\end{split}
\end{align}
implying $g^{h,\tau}_p(V,W)=g^{h,\tau}_p(W,V)$.
For $\varphi\in C^{\infty}(\mathcal{M})$,
it turns out that 
\begin{align*}
g^{h,\tau}_p(\varphi V,W)
=\int_{\mathcal{X}} (\varphi V)_p(e_x) W_p(e_x) m_{h,\tau} (p(x)) d\mu(x)
=\varphi (p)\cdot g^{h,\tau}_p(V,W),
\end{align*}
which ensures that $g^{h,\tau}$ is a symmetric $(0,2)$ tensor filed on $\mathcal{M}$.
Since $m_{h,\tau}$ is positive on~$I$, it follows from Condition \eqref{C3} that $g_p^{h,\tau}(V,V)>0$ if $V_p \neq 0$.
Similarly, we find
\begin{align*}
g^{h,\tau}_p(\nabla^{h,\tau}_{\varphi Z}V,W)
&=
\varphi (p) \cdot g^{h,\tau}_p(\nabla^{h,\tau}_ZV,W), \\
g^{h,\tau}_p(\nabla^{h,\tau}_Z \varphi V,W)
&=
\varphi (p) \cdot g^{h,\tau}_p(\nabla^{h,\tau}_ZV,W)+Z_p(\varphi) g^{h,\tau}_p(V,W),\\
g^{h,\tau}_p(\nabla^{h,\tau}_ZV,W)
-g^{h,\tau}_p(\nabla^{h,\tau}_ZV,W)
&=g^{h,\tau}_p([Z,V],W),
\end{align*}
implying that $\nabla^{h,\tau}$ is a torsion-free connection on $\mathcal{M}$.
Thus the proof is achieved.
\end{proof}
For $(h|_{\tau(I)},\tau, I)\in \mathcal{G}_\ast$,  
if $\mathcal{M}$ is $(h|_{\tau(I)},\tau)$-compatible and $(h^\ast|_{\ell(I)},\ell)$-compatible,
then $g^{h|_{\tau(I)},\tau}=g^{h^\ast|_{\ell(I)},\ell}$ but $\nabla^{h|_{\tau(I)},\tau}\neq \nabla^{h^\ast|_{\ell(I)},\ell}$ in general
(see Remark~\ref{underinv} and also Remark~\ref{ambiguity}\eqref{gfm}).

If $\mathcal{M}$ has  an  $\exp_{h,\tau}$-coordinate representation $(\mathcal{M},\theta,T,c,\psi)$,
then it follows from~\eqref{diff} and \eqref{cal} that
\begin{align}\label{third}
\begin{split}
-\widetilde{\partial_i}\left( \widehat{\partial_j} \left(d_{h,\tau}\circ (e_x, e_x) \right) \right)\Bigm|_{(p, p)} 
=&
(T_i-\partial_i \psi|_p) \cdot 
(T_j-\partial_j \psi|_p) \cdot \tau'(p(x))\chi_{h,\tau}(p(x)),\\ 
-
\widehat{\partial_i}\left(\widehat{\partial_j}\widetilde{\partial_k} \left(d_{h,\tau}\circ (e_x, e_x) \right)\right)\Bigm|_{(p, p)}
=&-\partial_i \partial_j \psi|_p \cdot \partial_k (\tau \circ e_x)|_p,
\end{split}
\end{align}
where we used the properties
\[
\chi_{h,\tau}\cdot m_{h,\tau}=\tau',\qquad
\gamma_{h,\tau}
+\frac{\chi'_{h,\tau}}{\chi_{h,\tau}}m_{h,\tau}
=0,
\quad \text{on $I$}.
\]
This gives the following consequence.
\begin{lemma}\label{lemm}
Assume that $\mathcal{M}$ has  an  $\exp_{h,\tau}$-coordinate representation $(\mathcal{M},\theta,T,c,\psi)$.
If either $\mathcal{X}$ is compact or $(h,\tau)=(r\log r, \mathrm{id}_I)$,
then~$\mathcal{M}$ is $(h,\tau)$-compatible.
\end{lemma}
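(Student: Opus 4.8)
The plan is to use the explicit formulas \eqref{diff}, \eqref{cal} and \eqref{third} to reduce $(h,\tau)$-compatibility to the $L^1(\mu)$-membership and smoothness of two concrete families of integrals, and then to dispose of these in each of the two cases. First I would expand arbitrary $V,W,Z\in\mathfrak{X}(\mathcal{M})$ in the coordinate vector fields of the global coordinate system $(\mathcal{M},\theta)$. Since $\widetilde{\partial_i}$ acts only on the first factor of $\mathcal{M}\times\mathcal{M}$ and $\widehat{\partial_j}$ only on the second, with $\widetilde{\partial_i}\widehat{\partial_j}=\widehat{\partial_j}\widetilde{\partial_i}$, the functions
\[
x\mapsto \widetilde{V}_{(p,p)}\!\left(\widehat{W}(d_{h,\tau}\circ(e_x,e_x))\right),\qquad
x\mapsto \widehat{V}_{(p,p)}\!\left(\widehat{W}\widetilde{Z}(d_{h,\tau}\circ(e_x,e_x))\right)
\]
are finite $\mathbb{R}$-linear combinations, with coefficients depending smoothly on $p$ (the $V^i,W^j,Z^k$ and their $\theta$-derivatives), of the two right-hand sides of \eqref{third}; the cross terms in which $\widehat{V}$ differentiates the coefficients of $W$ reduce, via $\widehat{\partial_j}\widetilde{\partial_k}=\widetilde{\partial_k}\widehat{\partial_j}$, to the first right-hand side of \eqref{third}. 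By \eqref{cal} it therefore suffices to show that, for all $i,j,k$, the functions
\[
p\mapsto \int_{\mathcal{X}}\partial_i e_x|_p\,\partial_j e_x|_p\,m_{h,\tau}(p(x))\,d\mu(x),\qquad
p\mapsto \partial_i\partial_j\psi|_p\int_{\mathcal{X}}\partial_k(\tau\circ e_x)|_p\,d\mu(x)
\]
are well defined (the integrands lying in $L^1(\mu)$) and smooth on $\mathcal{M}$.

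When $\mathcal{X}$ is compact I would argue exactly as in the proof of Lemma~\ref{lem}\eqref{462}: by that lemma $\mathcal{M}$ is a statistical model, $p\in C(\mathcal{X})$ for each $p\in\mathcal{M}$, and, through \eqref{diff}, $e_x(p)$ together with each of its $\theta$-derivatives depends continuously on $(p,x)$ (the argument $\langle\theta(p),T(x)\rangle-c(x)-\psi(p)$ being continuous in $(p,x)$ and lying in $\ell_{h,\tau}(I)$, where $\exp_{h,\tau}$ is smooth, as noted in the proof of Lemma~\ref{lem}). For a compact neighborhood $K$ of a given point of $\mathcal{M}$, this argument stays in a compact subinterval of $\ell_{h,\tau}(I)$ over $K\times\mathcal{X}$, so the two integrands and each of their $\theta$-derivatives are bounded on $K\times\mathcal{X}$, hence $\mu$-integrable since $\mu(\mathcal{X})<\infty$. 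The dominated convergence theorem then permits differentiation under the integral sign to all orders, yielding both the integrability and the smoothness. (For the second integral one may alternatively note $\int_{\mathcal{X}}\partial_k(\tau\circ e_x)|_p\,d\mu(x)=\partial_k\mathbb{I}_\tau(p)$ with $\mathbb{I}_\tau\in C^\infty(\mathcal{M})$ by Remark~\ref{remm}\eqref{remm2}.)

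When $(h,\tau)=(r\log r,\mathrm{id}_I)$, so that $\chi_{h,\tau}=\mathrm{id}_I$, $m_{h,\tau}(t)=1/t$ and $\tau'\equiv1$, I would instead use Condition~\eqref{C4}, which holds by Lemma~\ref{lem}\eqref{462}, together with the identities $\int_{\mathcal{X}}\partial_i e_x|_p\,d\mu(x)=\int_{\mathcal{X}}\partial_i\partial_j e_x|_p\,d\mu(x)=0$ obtained in its proof. From \eqref{diff} with $\chi_{h,\tau}=\mathrm{id}_I$ one gets
\[
\partial_i e_x|_p\,\partial_j e_x|_p\,m_{h,\tau}(p(x))=\partial_i\partial_j e_x|_p+\partial_i\partial_j\psi|_p\,e_x(p),
\]
whose two summands lie in $L^1(\mu)$ and which, using $\mathbb{I}\equiv1$, integrates to $\partial_i\partial_j\psi|_p$; this is smooth because $\psi\in C^\infty(\mathcal{M})$. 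Likewise $\partial_k(\tau\circ e_x)|_p=\partial_k e_x|_p\in L^1(\mu)$ integrates to $0$, so the second function vanishes identically. Together with the reduction, this establishes $(h,\tau)$-compatibility in both cases.

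The main obstacle is the smoothness — rather than mere finiteness — of the Gram-type integral $p\mapsto\int_{\mathcal{X}}\partial_i e_x|_p\,\partial_j e_x|_p\,m_{h,\tau}(p(x))\,d\mu(x)$: for $(h,\tau)=(r\log r,\mathrm{id}_I)$ this is bypassed by the displayed identity that rewrites it as $\partial_i\partial_j\psi$, but for compact $\mathcal{X}$ it requires the differentiation-under-the-integral estimates, carried out as in Lemma~\ref{lem}\eqref{462}.
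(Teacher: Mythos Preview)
Your proof is correct and follows essentially the same route as the paper's own proof, which is very terse: for compact $\mathcal{X}$ it invokes $\mu(\mathcal{X})<\infty$ and the dominated convergence theorem, and for $(h,\tau)=(r\log r,\mathrm{id}_I)$ it cites $\chi_{h,\tau}=\mathrm{id}_I$ together with \eqref{derideri} and \eqref{third}. Your identity $\partial_i e_x\,\partial_j e_x\,m_{h,\tau}(e_x)=\partial_i\partial_j e_x+\partial_i\partial_j\psi\cdot e_x$ is exactly the content of \eqref{derideri} in this special case, and your treatment of the second integral via $\partial_k(\tau\circ e_x)=\partial_k e_x$ and Condition~\eqref{C4} matches the paper's use of \eqref{third}; the only difference is that you spell out the coordinate reduction and the compact-neighborhood domination argument that the paper leaves implicit.
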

\begin{proof}
If $\mathcal{X}$ is compact, 
then $\mu(\mathcal{X})<\infty$ ensures the integrability in Definition~\ref{compatible}.
Moreover,  the smoothness  in Definition~\ref{compatible} follows from the dominated convergence theorem.
On the other hand, if $(h,\tau)=(r\log r, \mathrm{id}_I)$, then the claim follows from 
$\chi_{h,\tau}=\mathrm{id}_I$ and~\eqref{derideri} together with~\eqref{third}.
\end{proof}

We give a condition for $( g^{h,\tau},\nabla^{h,\tau})$ being a Hessian structure on an $\exp_{h,\tau}$-family.
Recall
\[
s^{\star}_{h,\tau}\coloneqq-\tau \cdot (h'\circ \tau)+(h\circ \tau)=-\tau \cdot \ell_{h,\tau}+(h\circ \tau).
\]
If $(h,\tau)=(r\log r, \mathrm{id}_I)$, then $s^{\star}_{h,\tau}=-\mathrm{id}_{I}$.
\begin{theorem}\label{mainthm22}
Let  $\mathcal{M}$ be an $(h,\tau)$-compatible $\exp_{h,\tau}$-family.
Assume  $\mathbb{I}_{\chi_{h,\tau}}\in \mathbb{R}^\mathcal{M}$ and $\mathbb{I}_\tau \in C^{\infty}(\mathcal{M})$ such that 
\begin{align}\label{eexchange}
V_p(\mathbb{I}_\tau)&=\int_{{\mathcal{X}}} V_p( \tau \circ e_x ) d\mu(x)
\quad\text{for }V\in \mathfrak{X}(\mathcal{M}) \text{ and }p\in \mathcal{M}.
\end{align}
\begin{enumerate}
 \setlength{\leftskip}{-15pt}
\item\label{541}
For an $\exp_{h,\tau}$-coordinate representation $(\mathcal{M},\theta,T,c, \psi)$ on $\mathcal{M}$,
the coordinate system~$(\mathcal{M},\theta)$ is affine with respect to $\nabla^{h,\tau}$ if $\mathbb{I}_\tau$ is constant on $\mathcal{M}$. 
Moreover, if $\chi_{h,\tau}'>0$ on $I$ and $(\mathcal{M},\theta)$ is affine with respect to $\nabla^{h,\tau}$, 
then $\mathbb{I}_\tau$ is constant on $\mathcal{M}$.
\item\label{542}
Assume that $\mathbb{I}_\tau$ is constant on $\mathcal{M}$ and $\mathbb{I}_{s^\star_{h,\tau}} \in C^\infty(\mathcal{M})$ satisfies 
\begin{align}\label{exchange1}
V_p(\mathbb{I}_{s^\star_{h,\tau}})
&=\int_{\mathcal{X}} V_p \left(s^\star_{h,\tau} \circ e_x \right) d\mu(x),\\ \label{exchange2}
V_p\left(W (\mathbb{I}_{s^\star_{h,\tau}})\right)
&=\int_{\mathcal{X}} V_p\left(W \left(s^\star_{h,\tau} \circ e_x \right)\right) d\mu(x),
\end{align}
for $V,W \in \mathfrak{X}(\mathcal{M})$ and $p\in \mathcal{M}$.
Then $( g^{h,\tau},\nabla^{h,\tau})$ is a Hessian structure on $\mathcal{M}$ and its global potential is $-\mathbb{I}_{s^\star_{h,\tau}}+\psi\mathbb{I}_\tau$, 
where $\psi$ is a global $\exp_{h,\tau}$-normalization of $\mathcal{M}$.
Moreover, the canonical divergence of the Hessian structure $( g^{h,\tau},\nabla^{h,\tau})$ on $\mathcal{M}$ is the $(h,\tau)$-divergence.
\end{enumerate}
\end{theorem}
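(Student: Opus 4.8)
The plan is to work throughout in a global $\exp_{h,\tau}$-coordinate representation $(\mathcal{M},\theta,T,c,\psi)$, which exists by Proposition~\ref{global}, and to reduce everything to the formulas~\eqref{third} together with the derivative expressions~\eqref{derideri} from Lemma~\ref{lem}\eqref{463}. For Claim~\eqref{541}, I would compute the Christoffel symbols of $\nabla^{h,\tau}$ in the coordinates $\theta$. Using the defining relation~\eqref{met} for $\nabla^{h,\tau}$ and the second line of~\eqref{third}, one gets $g^{h,\tau}_p(\nabla^{h,\tau}_{\partial_i}\partial_j,\partial_k)=-\partial_i\partial_j\psi|_p\cdot\partial_k(\tau\circ e_x)$ integrated against $\mu$; by the exchange hypothesis~\eqref{eexchange} this integral equals $-\partial_i\partial_j\psi|_p\cdot\partial_k(\mathbb{I}_\tau)|_p$. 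Hence if $\mathbb{I}_\tau$ is constant, $\partial_k(\mathbb{I}_\tau)\equiv0$, so $g^{h,\tau}(\nabla^{h,\tau}_{\partial_i}\partial_j,\partial_k)\equiv0$ for all $k$; since $g^{h,\tau}$ is nondegenerate (Lemma~\ref{metcon}), $\nabla^{h,\tau}_{\partial_i}\partial_j=0$, i.e.\ $(\mathcal{M},\theta)$ is affine. For the converse, read the same identity backwards: if $(\mathcal{M},\theta)$ is affine then $\partial_i\partial_j\psi|_p\cdot\partial_k(\mathbb{I}_\tau)|_p=0$ for all $i,j,k,p$; but Corollary~\ref{psipos} (applicable since $\mathbb{I}_{\chi_{h,\tau}}\in\mathbb{R}^\mathcal{M}$ and $\chi'_{h,\tau}>0$) gives $(\partial_i\partial_j\psi)$ positive definite, hence nonzero, forcing $\partial_k(\mathbb{I}_\tau)\equiv0$ and so $\mathbb{I}_\tau$ constant on the connected manifold $\mathcal{M}$.

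For Claim~\eqref{542}, having established via~\eqref{541} that $(\mathcal{M},\theta)$ is an affine coordinate system with respect to $\nabla^{h,\tau}$, Proposition~\ref{straightforward} shows $\nabla^{h,\tau}$ is flat. It then remains to exhibit a global potential $\Psi$ with $g^{h,\tau}=\nabla^{h,\tau}d\Psi$. Because $\theta$ is affine, by~\eqref{flatness} this amounts to checking $g^{h,\tau}_p(\partial_i,\partial_j)=\partial_i\partial_j\Psi|_p$ for the candidate $\Psi:=-\mathbb{I}_{s^\star_{h,\tau}}+\psi\,\mathbb{I}_\tau$. I would compute the right-hand side: since $\mathbb{I}_\tau$ is the constant $\mathbb{I}_\tau(p)\equiv\kappa$, one has $\partial_i\partial_j(\psi\mathbb{I}_\tau)=\kappa\,\partial_i\partial_j\psi$, while~\eqref{exchange1}--\eqref{exchange2} allow differentiating under the integral sign to get $\partial_i\partial_j(\mathbb{I}_{s^\star_{h,\tau}})|_p=\int_{\mathcal{X}}\partial_i\partial_j(s^\star_{h,\tau}\circ e_x)|_p\,d\mu(x)$. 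Using $s^\star_{h,\tau}=-\tau\cdot\ell_{h,\tau}+(h\circ\tau)$, a direct (but routine) computation of its first and second derivatives along $e_x$, combined with~\eqref{diff}, should collapse the combination $-\partial_i\partial_j\mathbb{I}_{s^\star_{h,\tau}}+\kappa\,\partial_i\partial_j\psi$ into exactly $\int_{\mathcal{X}}(T_i-\partial_i\psi)(T_j-\partial_j\psi)\,\tau'(p(x))\,\chi_{h,\tau}(p(x))\,d\mu(x)$, which by the first line of~\eqref{third} is $g^{h,\tau}_p(\partial_i,\partial_j)$. This identifies $(g^{h,\tau},\nabla^{h,\tau})$ as a Hessian structure with the asserted global potential.

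Finally, for the canonical divergence statement, I would invoke Proposition~\ref{cano}: with the affine coordinates $\xi=\theta$, the dual coordinate $\eta=(\partial_i\Psi)$, and the local (here global) potential $\Psi$, the canonical divergence is $\overline{D}(p,p')=\Psi(p')-\Psi(p)+\langle\theta(p)-\theta(p'),\eta(p)\rangle$. I would then expand this using $\Psi=-\mathbb{I}_{s^\star_{h,\tau}}+\psi\mathbb{I}_\tau$ and $\eta_i=-\partial_i\mathbb{I}_{s^\star_{h,\tau}}+\kappa\,\partial_i\psi=\mathbb{I}_{(T_i-\partial_i\psi)\tau'\chi_{h,\tau}}$-type expressions, and match term by term against the pointwise identity $d_{h,\tau}(t,s)=h(\tau(t))-h(\tau(s))-(\tau(t)-\tau(s))h'(\tau(s))$ integrated over $\mathcal{X}$; the algebraic identity $s^\star_{h,\tau}+\tau\cdot\ell_{h,\tau}=h\circ\tau$ and the $\exp_{h,\tau}$-relation $\ell_{h,\tau}(p(x))=\langle\theta(p),T(x)\rangle-c(x)-\psi(p)$ are what make the cross-terms line up, so that $\overline{D}=D_{h,\tau}$ on $\mathcal{M}\times\mathcal{M}$. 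The main obstacle I anticipate is the bookkeeping in this last matching step — ensuring the constant $\kappa=\mathbb{I}_\tau$, the normalization $\psi$, and the $c(x)$ term all cancel correctly — and, secondarily, justifying the differentiation-under-the-integral in Claim~\eqref{542}, which is precisely why the hypotheses~\eqref{exchange1}--\eqref{exchange2} are imposed rather than derived.
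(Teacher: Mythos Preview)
Your plan is correct and matches the paper's proof essentially step for step: the same use of~\eqref{third} and~\eqref{eexchange} to get $g^{h,\tau}(\nabla^{h,\tau}_{\partial_i}\partial_j,\partial_k)=-\partial_i\partial_j\psi\cdot\partial_k\mathbb{I}_\tau$ for Claim~\eqref{541}, the same computation of $\partial_i\partial_j\mathbb{I}_{s^\star_{h,\tau}}$ via $(s^\star_{h,\tau})'=-\tau/\chi_{h,\tau}$ and $(s^\star_{h,\tau})''=-m_{h,\tau}+\tau\chi'_{h,\tau}/\chi_{h,\tau}^2$ for the potential, and the same appeal to Proposition~\ref{cano} together with $\ell_{h,\tau}(p)-\ell_{h,\tau}(p')=\langle\theta(p)-\theta(p'),T\rangle-\psi(p)+\psi(p')$ for the canonical divergence. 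One small correction: in the last step your dual coordinate $\eta_i$ simplifies cleanly to $\mathbb{I}_{T_i\tau}(p)$ (not a ``$\tau'\chi_{h,\tau}$-type'' expression), which is exactly what makes the bookkeeping you worry about painless.
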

\begin{proof}
Let $(\mathcal{M},\theta,T,c, \psi)$ be an $\exp_{h,\tau}$-coordinate representation on $\mathcal{M}$.
By~\eqref{third}, we have
\begin{equation}\label{flat}
g^{h,\tau}(\nabla^{h,\tau}_{\partial_i}\partial_j, \partial_k)
=\int_{\mathcal{X}} \left(-\partial_i \partial_j \psi|_p \cdot \partial_k (\tau \circ e_x)|_p \right)d\mu(x)
=-\partial_i \partial_j \psi|_p \cdot \partial_k \mathbb{I}_\tau|_p
\end{equation}
for $1\leq i,j,k \leq n$.
Thus if $\mathbb{I}_\tau$ is constant on $\mathcal{M}$,
then $(\mathcal{M},\theta)$ is affine with respect to $\nabla^{h,\tau}$. 
Moreover, if $\chi_{h,\tau}'>0$,  then  $\partial_i^2 \psi >0$ on $\mathcal{M}$ by Corollary~\ref{psipos}.
This implies that if $(\mathcal{M},\theta)$ is affine with respect to $\nabla^{h,\tau}$ 
then  $\partial_k\mathbb{I}_\tau=0$ on $\mathcal{M}$ for each $k$, that is,  $\mathbb{I}_\tau$ is constant on~$\mathcal{M}$.
Thus Claim~\eqref{541} holds.

Let us prove Claim~\eqref{542}.
Assume that $\mathbb{I}_\tau$ is constant on $\mathcal{M}$  and $\mathbb{I}_{s^\star_{h,\tau}} \in C^\infty(\mathcal{M})$ satisfies~\eqref{exchange1},\eqref{exchange2}.
Fix an $\exp_{h,\tau}$-coordinate representation $(\mathcal{M},\theta,T,c, \psi)$ on $\mathcal{M}$,
which is unique  up to affine transformation by Proposition~\ref{minimal}.
Then $(\mathcal{M},\theta)$ is affine with respect to $\nabla^{h,\tau}$ by Claim~\eqref{541}
and $\nabla^{h,\tau}$ is flat on $\mathcal{M}$ by Proposition~\ref{straightforward}.
Moreover,
$\nabla^{h,\tau}d \psi$ is determined independent of the choice of global $\exp_{h,\tau}$-normalization $\psi$.
Using  the relation
\[
s^\star_{h,\tau}{}'=- \frac{\tau}{\chi_{h,\tau}},\qquad
s^\star_{h,\tau}{}''=-m_{h,\tau}+ \frac{\tau\chi_{h,\tau}'}{\chi_{h,\tau}^2},
\]
with~\eqref{exchange2} and \eqref{diff}, we have
\begin{align*}
\partial_i \partial_j \mathbb{I}_{s^\star_{h,\tau}}|_p
=&
\int_{{\mathcal{X}}} \partial_i \partial_j \left(s^\star_{h,\tau} \circ e_x\right)|_p
d\mu(x)\\
=&
\int_{{\mathcal{X}}} 
\left(
 \partial_i e_x|_p\cdot\partial_j e_x|_p\cdot s^\star_{h,\tau}{}''(p(x))
 +
\partial_i \partial_j e_x|_p  \cdot s^\star_{h,\tau}{}'(p(x))
\right)d\mu(x)\\
=&
\int_{{\mathcal{X}}} 
\left(
-\partial_i e_x|_p\cdot\partial_j e_x|_p \cdot m_{h,\tau}(p(x))
+ \partial_i\partial_j \psi|_p\cdot \tau(p(x))
\right)d\mu(x)\\
=&
-g^{h,\tau}_{ij}(p)+\partial_i\partial_j \psi|_p \cdot \mathbb{I}_\tau(p).
\end{align*}
By the property~\eqref{flatness}, we find  
\[
g^{h,\tau}=\nabla^{h,\tau} d \left(-\mathbb{I}_{s^\star_{h,\tau}}+ \psi \mathbb{I}_\tau \right)
\]
and $( g^{h,\tau},\nabla^{h,\tau})$ is a Hessian structure on $\mathcal{M}$ with the global potential $-\mathbb{I}_{s^\star_{h,\tau}}+ \psi \mathbb{I}_\tau$.

For $1\leq i \leq n$, 
we observe from~\eqref{exchange1} and \eqref{diff} that 
\begin{align*}
\partial_i \left(-\mathbb{I}_{s^\star_{h,\tau}}+ \psi \mathbb{I}_\tau\right)|_p
&=-\int_{\mathcal{X}} \partial_i e_x|_p \cdot s^\star_{h,\tau}{}'(p(x)) d\mu(x) +\partial_i \psi|_p \cdot \mathbb{I}_\tau(p)\\
&=\int_{\mathcal{X}} (T_i(x)- \partial_i \psi|_p ) \chi_{h,\tau}(p(x))\cdot \frac{\tau(p(x))}{\chi_{h,\tau}(p(x))} d\mu(x) 
+\partial_i \psi|_p \cdot \mathbb{I}_\tau(p)\\
&=\int_{\mathcal{X}} T_i \tau(p) d\mu\\
&=\mathbb{I}_{T_i\tau}(p).
\end{align*}
For $p,p'\in \mathcal{M}$, 
since we have
\begin{equation}\label{aaa}
 \ell_{h,\tau}(p)- \ell_{h,\tau}(p')
 =\langle \theta(p)-\theta(p'), T\rangle-\psi(p)+\psi(p')
\quad \text{on } \mathcal{X},
\end{equation}
it follows from Proposition~\ref{cano} that
the canonical divergence $\overline{D}$ of $(g^{h,\tau}, \nabla^{h,\tau})$ is given by
\begin{align*}
\overline{D}(p,p')
&=
(-\mathbb{I}_{s^\star_{h,\tau}}(p')+ \psi \mathbb{I}_\tau(p'))
-
(-\mathbb{I}_{s^\star_{h,\tau}}(p)+ \psi \mathbb{I}_\tau(p))
+\langle \theta(p)-\theta(p'), \mathbb{I}_{T\tau}(p) \rangle\\
&=-\mathbb{I}_{s^\star_{h,\tau}}(p')+\mathbb{I}_{s^\star_{h,\tau}}(p)
+\int_{\mathcal{X}} (-\psi(p)+\psi(p'))\tau(p)d\mu
+\int_{\mathcal{X}} \left\langle \theta(p) -\theta(p'), T \right\rangle \tau(p)d\mu \\
&=-\mathbb{I}_{s^\star_{h,\tau}}(p')+\mathbb{I}_{s^\star_{h,\tau}}(p)
+\int_{\mathcal{X}} 
 \left( \ell_{h,\tau}(p)- \ell_{h,\tau}(p')\right) \tau(p) d\mu \\
&=\int_{\mathcal{X}}
\left[
h(\tau(p))-h(\tau(p'))-(\tau(p)-\tau(p'))\cdot \ell_{h,\tau}(p')
\right]d\mu\\
&=D_{h,\tau}(p,p'),
\end{align*}
where the second equality follows from the assumption that $\mathbb{I}_\tau$ is constant on $\mathcal{M}$. This completes the proof of Claim~\eqref{542}.
\end{proof}
\begin{proof}[Proof of Theorem~\ref{mainthm2}]
Since $\mathcal{X}$ is compact, 
$\mathcal{M}$ is $(h,\tau)$-compatible by Lemma~\ref{lemm}.
Moreover, $\mathbb{I}_\tau, \mathbb{I}_{s^\star_{h,\tau}} \in C^{\infty}(\mathcal{M})$ and 
\eqref{eexchange}--\eqref{exchange2} hold.
Then the proof immediately follows from Theorem~\ref{mainthm22}.
\end{proof}
\begin{remark}
\begin{enumerate}
\setlength{\leftmargin}{23pt}
\setlength{\leftskip}{-15pt}
\item
The flatness of $\nabla^{h,\tau}$ on an $(h,\tau)$-compatible $\exp_{h,\tau}$-family is independent of the choice of representatives in the equivalence class defined by $\simeq$.
However,
since~$s_{h,\tau}^{\star}$  depends on the choice of representatives, 
whether or not $(g^{h,\tau},\nabla^{h,\tau})$ becomes a Hessian structure on  an $(h,\tau)$-compatible $\exp_{h,\tau}$-family depends on the choice of representatives.
\item
A  claim similar to Theorem~\ref{mainthm22} can be found in \cite{NZ}*{Proposition~2}.
However, in~\cite{NZ}, the notion of Hessian metric is defined without considering a connection,
which differs from our definition of a Hessian structure.
\end{enumerate}
\end{remark}

We give a sufficient condition for $( g^{h,\tau},\nabla^{h,\tau})$ to be $1$-conformally equivalent to a Hessian structure.
\begin{proposition}{\rm(cf.\,\cite{NZ}*{Theorem~4})}\label{1conf}
Let  $\mathcal{M}$ be an $(h,\tau)$-compatible $\exp_{h,\tau}$-family.
Assume that  $\tau/\chi_{h,\tau}$ is constant on~$I$.
\begin{enumerate}
\setlength{\leftskip}{-15pt}
\item\label{561}
The function $\mathbb{I}_\tau$ is smooth and positive on $\mathcal{M}$.
\item\label{562}
Define a Riemannian metric $\overline{g}$ and a connection $\overline{\nabla}$ on $\mathcal{M}$
respectively by 
\[
\overline{g}\coloneqq \frac1{\mathbb{I}_{\tau}} \cdot g^{h,\tau},
\qquad
\overline{g}(\overline{\nabla}_V W,Z)=
\overline{g}(\nabla^{h,\tau}_V W,Z)-Z\left(\log \frac1{\mathbb{I}_{\tau}}\right) \overline{g}(V,W).
\]
Assume~\eqref{eexchange}.
Then the pair $(\overline{g},\overline{\nabla})$ is a Hessian structure on $\mathcal{M}$,
where any global $\exp_{h,\tau}$-normalization of $\mathcal{M}$ becomes a global potential.
\item\label{563}
The $(h,\tau)$-divergence is a contrast function on~$\mathcal{M}$
and the canonical divergence $\overline{D}$ of $(\overline{g},\overline{\nabla})$ on~$\mathcal{M}$ is given by 
\[
\overline{D}(p,p')=\frac{1}{\mathbb{I}_{\tau}(p)}D_{h,\tau}(p,p').
\]
\end{enumerate}
\end{proposition}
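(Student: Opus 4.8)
The plan is to fix, by Proposition~\ref{global}, a global minimal $\exp_{h,\tau}$-coordinate representation $(\mathcal{M},\theta,T,c,\psi)$, to write $\kappa$ for the constant value of $\tau/\chi_{h,\tau}$ so that $\tau=\kappa\,\chi_{h,\tau}$ on $I$, and to record the only facts about $(h,\tau)$ that will be used: differentiating $\tau=\kappa\chi_{h,\tau}$ and using $\chi_{h,\tau}m_{h,\tau}=\tau'$ gives $\chi'_{h,\tau}/\chi_{h,\tau}=\tau'/\tau$ and $m_{h,\tau}=\kappa\,\tau'/\tau$ on $I$, while $s^\star_{h,\tau}{}'=-\tau/\chi_{h,\tau}=-\kappa$, so $s^\star_{h,\tau}(t)=-\kappa t+b$ for a constant $b$ and $h\circ\tau=s^\star_{h,\tau}+\tau\cdot\ell_{h,\tau}$. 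For \eqref{561}, note $\mathbb{I}_\tau=\kappa\,\mathbb{I}_{\chi_{h,\tau}}$. Integrating the second line of~\eqref{diff} against $\mu$ at a point $p$, killing $\int_{\mathcal X}\partial_i\partial_j e_x\,d\mu$ by Condition~\eqref{C4}, and using $\chi'_{h,\tau}/\chi_{h,\tau}=\tau'/\tau$ together with $\int_{\mathcal X}\partial_i e_x\partial_j e_x\,m_{h,\tau}(e_x)\,d\mu=g^{h,\tau}_{ij}(p)$ (from~\eqref{cal}) and $m_{h,\tau}=\kappa\tau'/\tau$ yields the key identity
\[
\partial_i\partial_j\psi|_p\cdot\mathbb{I}_\tau(p)=g^{h,\tau}_{ij}(p)\qquad(1\le i,j\le n).
\]
The $(h,\tau)$-compatibility (which makes the relevant integrals finite and $p\mapsto g^{h,\tau}_{ij}(p)$ smooth) and Condition~\eqref{C4} also give finiteness of $\mathbb{I}_{\chi_{h,\tau}}$, hence of $\mathbb{I}_\tau$ and $\mathbb{I}_{T_i\tau}=\kappa\,\mathbb{I}_{T_i\chi_{h,\tau}}$, by the reasoning of the proof of Lemma~\ref{lem}\eqref{463}. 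Since $g^{h,\tau}$ is positive definite (Lemma~\ref{metcon}) and $\mathbb{I}_\tau$ is finite and nonzero, $(\partial_i\partial_j\psi|_p)=\mathbb{I}_\tau(p)^{-1}(g^{h,\tau}_{ij}(p))$ is definite, so $\partial_i^2\psi\ne 0$ on $\mathcal M$ and $\mathbb{I}_\tau=g^{h,\tau}_{ii}/\partial_i^2\psi\in C^\infty(\mathcal M)$; positivity of $\mathbb{I}_\tau$ follows from $\tau>0$ on $I$ (equivalently, from $(\partial_i\partial_j\psi)$ being positive definite, cf.\ Corollary~\ref{psipos}).

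For \eqref{562}: by the key identity $\overline g_{ij}=g^{h,\tau}_{ij}/\mathbb{I}_\tau=\partial_i\partial_j\psi$ in the chart $\theta$, i.e.\ $\overline g$ is the Euclidean Hessian of $\psi$ in $\theta$. Combining~\eqref{flat} (which uses~\eqref{eexchange}) with the defining relation of $\overline\nabla$ then gives, for all $i,j,k$,
\[
\overline g(\overline\nabla_{\partial_i}\partial_j,\partial_k)
=\frac{1}{\mathbb{I}_\tau}\,g^{h,\tau}(\nabla^{h,\tau}_{\partial_i}\partial_j,\partial_k)+\frac{\partial_k\mathbb{I}_\tau}{\mathbb{I}_\tau}\,\overline g_{ij}
=\frac{\partial_k\mathbb{I}_\tau}{\mathbb{I}_\tau}\bigl(\overline g_{ij}-\partial_i\partial_j\psi\bigr)=0,
\]
so $(\mathcal M,\theta)$ is affine for $\overline\nabla$; hence $\overline\nabla$ is flat by Proposition~\ref{straightforward}, and by~\eqref{flatness} $(\overline\nabla d\psi)(\partial_i,\partial_j)=\partial_i\partial_j\psi=\overline g_{ij}$. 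Thus $\overline g=\overline\nabla d\psi$ on $\mathcal M$, so $(\overline g,\overline\nabla)$ is a Hessian structure with global potential $\psi$; any other global $\exp_{h,\tau}$-normalization differs from $\psi$ by an affine function of $\theta$ (Proposition~\ref{minimal}), whose $\overline\nabla$-Hessian vanishes, so it too is a global potential.

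For \eqref{563}: substituting $h\circ\tau=s^\star_{h,\tau}+\tau\cdot\ell_{h,\tau}$ and $s^\star_{h,\tau}(t)=-\kappa t+b$ into the definition of $d_{h,\tau}$ and using~\eqref{aaa} reduces $d_{h,\tau}(p(x),p'(x))$ to $-\kappa(p(x)-p'(x))+\tau(p(x))\bigl(\langle\theta(p)-\theta(p'),T(x)\rangle-\psi(p)+\psi(p')\bigr)$; integrating against $\mu$ with $\int_{\mathcal X}p\,d\mu=\int_{\mathcal X}p'\,d\mu=1$ gives $D_{h,\tau}(p,p')=\langle\theta(p)-\theta(p'),\mathbb{I}_{T\tau}(p)\rangle+(\psi(p')-\psi(p))\,\mathbb{I}_\tau(p)$. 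From $\int_{\mathcal X}\partial_i e_x\,d\mu=0$ (Condition~\eqref{C4}), $\partial_i e_x=(T_i-\partial_i\psi)\chi_{h,\tau}(e_x)$, and $\tau=\kappa\chi_{h,\tau}$ one gets $\partial_i\psi|_p=\mathbb{I}_{T_i\tau}(p)/\mathbb{I}_\tau(p)$, whence dividing by $\mathbb{I}_\tau(p)>0$ gives
\[
\frac{1}{\mathbb{I}_\tau(p)}D_{h,\tau}(p,p')=\psi(p')-\psi(p)+\langle\theta(p)-\theta(p'),(\partial_i\psi|_p)_{i=1}^{n}\rangle,
\]
which is exactly the canonical divergence $\overline D$ of $(\overline g,\overline\nabla)$ in the affine chart $\theta$ with potential $\psi$ (Proposition~\ref{cano}). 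In particular $D_{h,\tau}(p,p')=\mathbb{I}_\tau(p)\,\overline D(p,p')$ is finite-valued and smooth on $\mathcal M\times\mathcal M$, and being a divergence it is therefore a contrast function, which settles \eqref{563}.

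The step I expect to be the real obstacle is the analytic bookkeeping underlying \eqref{561}: proving, without compactness of $\mathcal X$, that $\mathbb{I}_{\chi_{h,\tau}},\mathbb{I}_\tau,\mathbb{I}_{T\tau}$ are finite and smooth, that all the interchanges of $\int_{\mathcal X}$ with $\partial_i,\partial_i\partial_j$ are legitimate, and that $\kappa>0$ (equivalently $\mathbb{I}_\tau>0$), solely from the $(h,\tau)$-compatibility and Condition~\eqref{C4} — the same flavor of dominated-convergence argument as in the proof of Lemma~\ref{lem}. Once these are granted, the geometric content of \eqref{562} and \eqref{563} reduces to the single identity $\partial_i\partial_j\psi\cdot\mathbb{I}_\tau=g^{h,\tau}_{ij}$ and short computations with the canonical divergence.
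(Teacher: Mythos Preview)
Your proof is correct and follows essentially the same route as the paper's: both hinge on the identity $g^{h,\tau}_{ij}=\partial_i\partial_j\psi\cdot\mathbb{I}_\tau$ obtained by combining \eqref{diff}, Condition~\eqref{C4}, and the relation $m_{h,\tau}=\lambda\,\chi'_{h,\tau}/\chi_{h,\tau}$ (your $\kappa\tau'/\tau$), then use \eqref{flat} to see $\theta$ is $\overline\nabla$-affine, and finally compute the canonical divergence via \eqref{aaa} and Proposition~\ref{cano}. The paper computes $\overline D$ first and identifies it with $D_{h,\tau}/\mathbb{I}_\tau$, whereas you compute $D_{h,\tau}$ first and divide; your worry about the sign of $\kappa$ and the finiteness of $\mathbb{I}_{\chi_{h,\tau}}$ is legitimate but the paper is equally terse there (it simply asserts $\chi'_{h,\tau}>0$), so this is not a gap in your argument relative to the paper's.
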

\begin{proof}
Let $(\mathcal{M}, \theta, T, c,\psi)$ be an $\exp_{h,\tau}$-coordinate representation on $\mathcal{M}$.
Set $\lambda\coloneqq \tau/\chi_{h,\tau}$.
Then we have $\chi'_{h,\tau}>0, m_{h,\tau}=\lambda \chi'_{h,\tau}/\chi_{h,\tau}$ on $I$ and 
observe from \eqref{C4} with \eqref{diff} that 
\begin{align*}
g^{h,\tau}_p(\partial_i, \partial_j)
&=\int_{{\mathcal{X}}} \partial_i e_x |_p\cdot \partial_je_x |_p \cdot m_{h,\tau}(p(x))d\mu(x)
=\partial_i \partial_j \psi |_p \cdot \mathbb{I}_{\tau}(p).
\end{align*}
This ensures that $\mathbb{I}_\tau$ is smooth and positive on $\mathcal{M}$, that is, Claim~\eqref{561} follows.

Let us prove Claim~\eqref{562}.
It follows from \eqref{flat} that 
\begin{align*}
\overline{g}(\overline{\nabla}_{\partial_i} \partial_j,\partial_k)
&=\frac1{\mathbb{I}_{\tau}} \cdot g^{h,\tau}(\nabla^{h,\tau}_{\partial_i} \partial_j, \partial_k)
+\frac{\partial_k \mathbb{I}_{\tau}}{\mathbb{I}_\tau^2}\cdot g^{h,\tau}(\partial_i,\partial_j)\\
&=- \frac1{\mathbb{I}_{\tau}}\cdot(\partial_i\partial_j\psi) \cdot \partial_k \mathbb{I}_\tau +\frac{ \partial_k \mathbb{I}_\tau}{\mathbb{I}_\tau^2}\cdot \partial_i \partial_j \psi \cdot \mathbb{I}_{\tau}(p)\\
&=0,
\end{align*}
which implies that $(\mathcal{M},\theta)$ is affine with respect to $\overline{\nabla}$ and 
\[
\overline{\nabla} d \psi (\partial_i, \partial_j)
=\partial_i \partial_j \psi=\frac1{\mathbb{I}_{\tau}} \cdot g^{h,\tau}(\partial_i,\partial_j)=\overline{g}(\partial_i, \partial_j).
\]
Thus $(\overline{g},\overline{\nabla})$ is a Hessian structure on $\mathcal{M}$ with the global potential $\psi$ and  Claim~\eqref{562} follows.

To prove Claim~\eqref{563},
define $\eta\colon \mathcal{M} \to \mathbb{R}^n$ by
$\eta(p)\coloneqq (\partial_i \psi |_p)_{i=1}^{n}$.
Then $\eta=\mathbb{I}_{T\tau }/\mathbb{I}_\tau$ follows from~\eqref{derideri}
and  the canonical divergence $\overline{D}$ of $(\overline{g},\overline{\nabla})$ on $\mathcal{M}$ is given by
\begin{align*}
\overline{D}(p,p')
&=\psi(p')-\psi(p)+\langle \theta (p)-\theta (p'), \eta(p)\rangle\\
&=\frac{1}{\mathbb{I}_\tau(p)} 
\int_{{\mathcal{X}}} \left( \psi(p')-\psi(p)+\langle \theta (p)-\theta (p'),T\rangle \right) \tau(p) d\mu\\
&=\frac{1}{\mathbb{I}_\tau(p)} 
\int_{{\mathcal{X}}} 
\left( \ell_{h,\tau}(p)-\ell_{h,\tau}(p') \right) \tau(p)d\mu,
\end{align*}
where we used~\eqref{aaa} in the last equality.
For $s,t \in I$, we calculate 
\begin{align*}
\left( \ell_{h,\tau}(t)-\ell_{h,\tau}(s) \right) \tau(t)
&=d_{h,\tau}(t,s)-s^\star_{h,\tau}(t)+s^\star_{h,\tau}(s),
\end{align*}
which yields
\begin{align*}
\overline{D}(p,p')
&=\frac{1}{\mathbb{I}_\tau(p)} 
\int_{{\mathcal{X}}} 
\left( d_{h,\tau}(p,p') -s^\star_{h,\tau}(p)+s^\star_{h,\tau}(p')\right)
d\mu.
\end{align*}
Since we have $s^\star_{h,\tau}{}'=-\tau\ell_{h,\tau}'=-\lambda$,
it turns out that 
\[
s^\star_{h,\tau}(s)-s^\star_{h,\tau}(t)=-\lambda(s-t)
\quad \text{for }s,t\in I,
\]
implying 
\begin{align*}
\overline{D}(p,p')
=\frac{1}{\mathbb{I}_\tau(p)} 
\int_{{\mathcal{X}}} 
 d_{h,\tau}(p,p')
 d\mu
=\frac{1}{\mathbb{I}_\tau(p)} D_{h,\tau}(p,p').
\end{align*}
Thus $D_{h,\tau}$ is a contrast function on $\mathcal{M}$
and
the proof of Claim~\eqref{563} is complete.
\end{proof}

\begin{remark}\label{ambiguity}
\begin{enumerate}
\setlength{\leftmargin}{23pt}
\setlength{\leftskip}{-15pt}
\item
Since $\mathbb{I}_{\mathrm{id}_I}=\mathbb{I}$ is identically $1$ on $\mathcal{P}_I(\mu)$,
as an immediate consequence of Theorem~\ref{mainthm22},
if the integration and the first and second differentials of $s^{\ast}_{h,\mathrm{id}_I} \circ e_x$  commute, that is, \eqref{exchange1},\eqref{exchange2} hold,
then the $(h, \mathrm{id}_I)$-divergence induces a Hessian structure on an $(h,\mathrm{id}_I)$-compatible $(h, \mathrm{id}_I)$-exponential family.
\item
There exists an example that satisfies the assumptions in Theorem~\ref{mainthm22} with $\tau\neq\mathrm{id}_I$.
Indeed, for $\lambda>0$, we define $(h_\lambda,\tau_\lambda,I)\in \mathcal{G}$ by
\[
h_\lambda(r)=\lambda^{-1}(r \log r-r), \qquad \tau_\lambda(t)=t^\lambda, \qquad I=(0,\infty),
\]
then $\exp_{h_\lambda, \tau_\lambda}(u)=e^u$ for $u\in \mathbb{R}$ 
but $(h_\lambda, \tau_\lambda,I)\not\simeq (h_{\lambda'}, \tau_{\lambda'},I)$ if $\lambda \neq \lambda'$.
For $\sigma>0$, set
\[
 \mathcal{M}_\sigma\coloneqq \left\{ p^{v}\coloneqq (2\pi\sigma^2)^{-\frac{d}{2}}\exp\left({-\frac{|\cdot-v|^2}{2\sigma^2}}\right) \biggm| v\in \mathbb{R}^d \right\}\subset \mathcal{P}(\mathcal{L}^d).
\]
Then $\mathcal{M}_\sigma$ is an $(h_\lambda, \tau_\lambda)$-compatible $\exp_{h_\lambda, \tau_\lambda}$-family 
and its $\exp_{h_\lambda, \tau_\lambda}$-coordinate representation $(\mathcal{M}_\sigma, \theta, \psi, T,c)$ is give by 
\[
\theta(p^{v})\coloneqq v, \qquad
T(x)=\frac{x}{\sigma^2},\qquad
c(x)=\frac{|x|^2}{2\sigma^2},\qquad
\psi(p^v)=\frac{|v|^2}{2\sigma^2}+\frac{d}{2} \log (2\pi\sigma^2).
\]
We find that 
\[
\mathbb{I}_{\tau_\lambda}(p^v)=\int_{\mathbb{R}^d} \tau_\lambda(p^{v}(x))dx=(2\pi\sigma^2)^{\frac{d}{2}(1-\lambda)} \lambda^{-\frac{d}{2}}
\]
and   the assumptions in Theorem~\ref{mainthm22} are valid for any $\lambda>0$.
\item\label{gfm}
While a contrast function satisfying Condition~\ref{Eguchi} determines a Riemannian metric, 
this correspondence is not injective.
Indeed, Barndorff-Nielsen and Jupp~\cite{BJ}*{Theorem~4.1} proved  three contrast functions satisfying Condition~\ref{Eguchi} induce the same Riemannian metric.
In our context, 
we have $g^{h,\tau}=g^{\tilde{h}, \tilde{\tau}}$ even if $(h,\tau, I)\not\simeq (\tilde{h}, \tilde{\tau},I)$
since the Riemannian metric $g^{h,\tau}$  is determined only by $m_{h,\tau}$, not by $(h,\tau)$  (see also in Remark~\ref{underinv}).
Naudts and Zhang \cite{NZ}*{Section~3} called this phenomenon the \emph{gauge freedom} of a Riemannian metric.
\end{enumerate}
\end{remark}

\section{Pythagorean relation}\label{6}
In order to establish the Pythagorean relation for the $(h,\tau)$-divergence, 
we introduce the \emph{$(h,\tau)$-entropy}. 
As well as a divergence, an entropy is often used but its definition is vague in information geometry.

In this paper, we regard an entropy as a generalization of an internal energy, that is, we consider only a trace-form entropy.
To be physical,
the density $\phi\in C^{\infty}([0,\infty))$ of the internal energy $\mathbb{I}_\phi$ on $\mathcal{P}(\mu)$ is assumed to vanish at $0$ and to be convex on $[0,\infty)$
since  the energy of no matter is zero and the thermodynamical pressure of $\phi$ given by
\[
t\mapsto t \phi'(t)-\phi(t)
\]
is nonnegative and nondecreasing
(see for example~\cite{Vi}*{\S5.2.3}).
The choice $\phi(r)=r\log r$ recovers the \emph{Boltzmann--Gibbs--Shannon entropy}, which is defined for $p\in \mathcal{P}(\mu)$ with $p \log p \in L^1(\mu)$ by
\[
S_{\mathrm{BGS}}(p)\coloneqq -\int_{{\mathcal{X}}} p(x) \log p(x) d\mu(x).
\]
However, in this paper, we do not assume that $\phi$ vanishes at $0$ and is convex on $[0,\infty)$.
\begin{definition}\label{ent}
For $(h,\tau, I)\in \mathcal{G}$,
we define  $s_{h,\tau}\in C^{\infty}(I)$ by
\[
s_{h,\tau}\coloneqq -h\circ \tau
\]
and we call $\mathbb{I}_{s_{h,\tau}}$ the \emph{$(h,\tau)$-entropy}. 
\end{definition}
\begin{remark}
Let $(h,\tau, I) \in \mathcal{G}$.
The thermodynamical pressure of $s_{h,\tau}$ is given by 
\[
t \mapsto t s'_{h,\tau}(t)-s_{h,\tau}(t)=s^\star_{h,\tau}(t)+(\tau(t)-t\tau'(t))\ell_{h,\tau}(t),
\]
hence $s^\star_{h,\tau}$ becomes the thermodynamical pressure of $s_{h,\tau}$ if $\tau=\mathrm{id}_I$.
Thus we may possibly regard $s^\star_{h,\tau}$ as a generalization of the thermodynamical pressure of $s_{h,\tau}$.
Note that, for $(h|_{\tau(I)},\tau, I)\in \mathcal{G}_\ast$,  
we have $s^\star_{h|_{\tau(I)},\tau}=s_{h^\ast|_{\ell_{h,\tau}(I)},\ell_{h,\tau}}$. 
\end{remark}
We apply the Pythagorean relation for the $(h,\tau)$-divergence to  maximize the $(h,\tau)$-entropy 
as similar as the Kullback--Leibler divergence and for  a maximization of the Boltzmann--Gibbs--Shannon entropy.
In~\cite{NZ}*{Section~5.4}, a maximization of the $(h,\tau)$-entropy is formally discussed by use of the method of Lagrange multipliers,
where the constraints are given in terms of \emph{escort distributions}.
We refer to \cite{Amari}*{Section 4.3} for the usage of escort distributions.
\begin{proposition}\label{maxcor}
For $(h,\tau,I)\in \mathcal{G}$, 
let 
$\mathcal{P}$ be a set in $\mathcal{P}_I(\mu)$ equipped with 
an $\exp_{h,\tau}$-representation  $(\mathcal{P}, \theta, T, c,\psi)$.
Assume that 
\[
\mathbb{I}_{\tau}, \mathbb{I}_{s_{h,\tau}}\mathbb{I}_{T\tau} \in \mathbb{R}^{\mathcal{P}}.
\]
Moreover, assume $ \mathbb{I}_{c\tau}\in \mathbb{R}^{\mathcal{P}} $  if $(h,\tau)\neq (r\log r, \mathrm{id}_I)$.
For $p\in \mathcal{P}_I(\mu)$, if
\[
\mathbb{I}_{\tau}, \mathbb{I}_{s_{h,\tau}},\mathbb{I}_{c\tau}, \mathbb{I}_{T\tau} \in \mathbb{R}^{\{p\}}
\]
and there exists $p_\ast\in \mathcal{P}$ such that 
\begin{equation}\label{constr}
\mathbb{I}_{\tau}(p)=\mathbb{I}_{\tau}(p_\ast), \qquad
\mathbb{I}_{T \tau}(p)=\mathbb{I}_{T\tau}(p_\ast),
\end{equation}
then it follows that
\[
D_{h,\tau} (p,p')=D_{h,\tau} (p,p_\ast)+D_{h,\tau} (p_\ast,p')
\quad\text{for }p'\in \mathcal{P}.
\]
Moreover, if $c$ vanishes $\mu$-a.e., then $\mathbb{I}_{s_{h,\tau}}(p_\ast)\geq \mathbb{I}_{s_{h,\tau}}(p)$  with equality if and only if $p=p_\ast$ for $\mu$-a.e.
\end{proposition}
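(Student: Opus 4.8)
The plan is to establish a pointwise three-point identity for $d_{h,\tau}$ and integrate it, using the $\exp_{h,\tau}$-representation of the densities in $\mathcal{P}$ to make $\ell_{h,\tau}$ affine in $T$ so that the constraints~\eqref{constr} annihilate the cross terms. First I would record $d_{h,\tau}(t,s)=-s_{h,\tau}(t)+s_{h,\tau}(s)-(\tau(t)-\tau(s))\ell_{h,\tau}(s)$ and check, by cancelling the $s_{h,\tau}$-terms, the identity
\[
d_{h,\tau}(t,s')-d_{h,\tau}(t,s)-d_{h,\tau}(s,s')=(\tau(t)-\tau(s))\bigl(\ell_{h,\tau}(s)-\ell_{h,\tau}(s')\bigr)
\qquad\text{for }s,s',t\in I.
\]
Evaluating at $t=p(x)$, $s=p_\ast(x)$, $s'=p'(x)$ yields the pointwise version of the claimed Pythagorean relation together with an explicit error integrand.

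Next I would verify that $D_{h,\tau}(p,p_\ast)$ and $D_{h,\tau}(p_\ast,p')$ are finite, so that integrating the identity over $\mathcal{X}$ is legitimate. Substituting $\ell_{h,\tau}(q(x))=\langle\theta(q),T(x)\rangle-c(x)-\psi(q)$ for $q\in\mathcal{P}$ (valid $\mu$-a.e.\ since $\exp_{h,\tau}$ restricted to $\ell_{h,\tau}(I)$ is the inverse of $\ell_{h,\tau}$), each of these divergences is a finite linear combination of $\mathbb{I}_{s_{h,\tau}},\mathbb{I}_{T\tau},\mathbb{I}_{c\tau},\mathbb{I}_{\tau}$ evaluated at $p$, $p_\ast$ and $p'$, all of which are finite by the hypotheses. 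The only point needing a separate argument is $\mathbb{I}_{c\tau}$ at $p_\ast$ and $p'$ in the exponential case $(h,\tau)=(r\log r,\mathrm{id}_I)$: there $\log q=\langle\theta(q),T\rangle-c-\psi(q)-1$ gives $c\cdot(\tau\circ q)=\langle\theta(q),T\cdot(\tau\circ q)\rangle-(\psi(q)+1)(\tau\circ q)+s_{h,\tau}(q)$, so $\mathbb{I}_{c\tau}$ is automatically finite on $\mathcal{P}$, which is why the corresponding hypothesis over $\mathcal{P}$ is imposed only when $(h,\tau)\neq(r\log r,\mathrm{id}_I)$.

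Integrating the identity over $\mathcal{X}$, the $c$'s cancel inside the error integral and one is left with
\begin{align*}
&\int_{\mathcal{X}}(\tau(p)-\tau(p_\ast))\bigl(\langle\theta(p_\ast)-\theta(p'),T\rangle-\psi(p_\ast)+\psi(p')\bigr)\,d\mu\\
&\qquad=\langle\theta(p_\ast)-\theta(p'),\mathbb{I}_{T\tau}(p)-\mathbb{I}_{T\tau}(p_\ast)\rangle+(\psi(p')-\psi(p_\ast))\bigl(\mathbb{I}_{\tau}(p)-\mathbb{I}_{\tau}(p_\ast)\bigr),
\end{align*}
which vanishes by~\eqref{constr}. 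This gives $D_{h,\tau}(p,p')=D_{h,\tau}(p,p_\ast)+D_{h,\tau}(p_\ast,p')$, and in particular the finiteness of the left-hand side.

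For the entropy statement I would set $c=0$ $\mu$-a.e.\ and apply the same computation to $D_{h,\tau}(p,p_\ast)$ alone: now $\int_{\mathcal{X}}(\tau(p)-\tau(p_\ast))\ell_{h,\tau}(p_\ast)\,d\mu=\langle\theta(p_\ast),\mathbb{I}_{T\tau}(p)-\mathbb{I}_{T\tau}(p_\ast)\rangle-\psi(p_\ast)\bigl(\mathbb{I}_{\tau}(p)-\mathbb{I}_{\tau}(p_\ast)\bigr)=0$ by~\eqref{constr}, hence $D_{h,\tau}(p,p_\ast)=\mathbb{I}_{s_{h,\tau}}(p_\ast)-\mathbb{I}_{s_{h,\tau}}(p)$. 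Since $d_{h,\tau}\ge 0$ this yields $\mathbb{I}_{s_{h,\tau}}(p_\ast)\ge\mathbb{I}_{s_{h,\tau}}(p)$, and equality forces $\int_{\mathcal{X}}d_{h,\tau}(p,p_\ast)\,d\mu=0$, so $d_{h,\tau}(p(x),p_\ast(x))=0$ and therefore $p(x)=p_\ast(x)$ for $\mu$-a.e.\ $x$, since $d_{h,\tau}(t,s)=0$ if and only if $t=s$. The step I expect to be the main obstacle is the integrability bookkeeping of the second paragraph (keeping track of which $\mathbb{I}$-functionals are needed at which density, and the separate treatment of $\mathbb{I}_{c\tau}$ in the exponential case); the algebraic identity and the final cancellations are routine.
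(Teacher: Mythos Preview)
Your proof is correct and follows essentially the same route as the paper: both arguments rest on the expansion $d_{h,\tau}(t,s)=-s_{h,\tau}(t)+s_{h,\tau}(s)-(\tau(t)-\tau(s))\ell_{h,\tau}(s)$ together with the representation $\ell_{h,\tau}(q)=\langle\theta(q),T\rangle-c-\psi(q)$ for $q\in\mathcal{P}$, and then use \eqref{constr} to kill the cross terms. Your three-point identity $d_{h,\tau}(t,s')-d_{h,\tau}(t,s)-d_{h,\tau}(s,s')=(\tau(t)-\tau(s))(\ell_{h,\tau}(s)-\ell_{h,\tau}(s'))$ is simply a clean repackaging of the paper's direct expansion of $D_{h,\tau}(p,p')$ and $D_{h,\tau}(p_\ast,p')$ side by side; your separate treatment of $\mathbb{I}_{c\tau}$ in the exponential case via $c\,q=\langle\theta(q),Tq\rangle-(\psi(q)+1)q+s_{h,\tau}(q)$ is likewise equivalent to the paper's observation that the second bracket in its expansion collapses when $(h,\tau)=(r\log r,\mathrm{id}_I)$.
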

\begin{proof}
For $p'\in \mathcal{P}$, it turns out that
\begin{align*}
d_{h,\tau}(p,p')
=&-s_{h,r}(p)+s_{h,r}(p')-(\tau(p)-\tau(p'))\ell_{h,\tau}(p')\\
=&-\left[s_{h,r}(p)+\tau(p)\cdot\left( \langle \theta(p'), T\rangle-c-\psi(p')\right)\right]\\
&+\left[s_{h,r}(p')+\tau(p')\cdot\left( \langle \theta(p'), T\rangle-c-\psi(p')\right)\right]
\quad\text{for $\mu$-a.e.}
\end{align*}
In the case of  $(h,\tau)= (r\log r, \mathrm{id}_I)$, we have
\begin{align*}
d_{h,\tau}(p,p')
&=-\left[-p\log p+ p\cdot\left( \langle \theta(p'), T\rangle-c-\psi(p')\right)\right]\\
&=-\left[s_{h,r}(p)+\tau(p)\cdot\left( \langle \theta(p'), T\rangle-c-\psi(p')\right)\right]
\quad\text{for $\mu$-a.e.}
\end{align*}
Then, by the assumption,  we have $d_{h,\tau}(p,p')\in L^1(\mu^{\otimes 2}) $ hence 
\begin{align*}
D_{h,\tau}(p,p')
=&-\left(\mathbb{I}_{s_{h,r}}(p)+\langle \theta(p'), \mathbb{I}_{T \tau}(p)\rangle-\mathbb{I}_{c\tau}(p)-\psi(p')\mathbb{I}_\tau(p)\right)\\
&+\left(\mathbb{I}_{s_{h,r}}(p')+ \langle  \theta(p'),\mathbb{I}_{T\tau}(p')\rangle-\mathbb{I}_{c\tau}(p')-\psi(p')\mathbb{I}_\tau(p')\right)
\quad\text{if $(h,\tau)\neq (r\log r, \mathrm{id}_I)$}
\end{align*}
and 
\[
D_{h,\tau}(p,p')
=-\left(\mathbb{I}_{s_{h,r}}(p)+\langle \theta(p'), \mathbb{I}_{T \tau}(p)\rangle-\mathbb{I}_{c\tau}(p)-\psi(p')\mathbb{I}_\tau(p)\right)
\quad\text{if $(h,\tau)= (r\log r, \mathrm{id}_I)$}.
\]
Similarly, we have $d_{h,\tau}(p_\ast,p')\in L^1(\mu^{\otimes 2}) $ and
 observe from~\eqref{constr} that
\begin{align*}
D_{h,\tau} (p,p_\ast)+D_{h,\tau} (p_\ast,p')=D_{h,\tau} (p,p').
\end{align*}
Moreover, if $c$ vanishes $\mu$-a.e., then
\begin{align*}
0\leq D_{h,\tau}(p,p_\ast)=-\mathbb{I}_{s_{h,r}}(p)+\mathbb{I}_{s_{h,r}}(p_\ast),
\end{align*}
implying  $\mathbb{I}_{s_{h,\tau}}(p_\ast) \geq \mathbb{I}_{s_{h,\tau}}(p_\ast)$ 
with equality if and only if $D_{h,\tau}(p,p_\ast)=0$, that is, $p=p_\ast$ for $\mu$-a.e.
Thus the proof is archived.
\end{proof}
\begin{remark}\label{standard}
\begin{enumerate}
\setlength{\leftmargin}{23pt}
\setlength{\leftskip}{-15pt}
\item
For $(h|_{\tau(I)},\mathrm{id}_I, I)\in \mathcal{G}_\ast$ with $J_{h^\ast}=\mathbb{R}$,
the $(h|_{\tau(I)},\mathrm{id}_I)$-divergence is called the \emph{$U$-divergence} 
and the Pythagorean relation for the $U$-divergence is proved in~\cite{MTKE}*{Theorem~1}.
This Pythagorean relation is irrelevant to the flatness of the connection $\nabla^{h|_{\tau(I)},\tau}$ and the flatness comes from the Euclidean connection 
even though a $U$-flat subspace can be regarded a counterpart of an $(h|_{\tau(I)},\mathrm{id}_I)$-exponential family
(see \cite{MTKE}*{Definition~2}).
See also \cite{NZ}*{Theorems~\text{1--4}}, where the Euclidean connection on a deformed exponential family is used.
\setlength{\itemindent}{0pt}
\item
\setlength{\leftskip}{-15pt}
For an $\exp_{h,\tau}$-representation $(U,\theta, T, c,\psi)$ on a set $\mathcal{P}$ in $\mathcal{P}(\mu)$,
the notion that $c$ vanishes $\mu$-a.e. corresponds to the notion for $(U,\theta, T, c,\psi)$ being \emph{standard} in~\cite{BN1970}*{Chapter~5}.
In the case of $(h,\tau,I)=(r\log r, \mathrm{id}_{(0,\infty)}, (0,\infty))$, 
we can always use an standard  exponential representation
since if we fix $p_0\in U$ and set 
\begin{align*}
\mathcal{P}_0\coloneqq \{pp_0^{-1}\mid p\in \mathcal{P}\},\qquad
U_0\coloneqq \{pp_0^{-1}\mid p\in U \},
\end{align*}
then $(U_0, \theta-\theta(p_0), T, 0, \psi-\psi(p_0))$ is an exponential representation on  $\mathcal{P}_0$ in $\mathcal{P}(p_0\mu)$.
\end{enumerate}
\end{remark}
\section{Proof of Theorem~\ref{mainthm3}}\label{proof3}
In order to infer the law of an unknown ${\mathcal{X}}$-valued random variable on a probability space $(\Omega, \mathbb{P})$,
we repeat trials and observe data $\bm{x}_\ast \in {\mathcal{X}}^k$, where $k\in \mathbb{N}$.
This corresponds to observing the values $(X_m(\omega))_{m=1}^k$ at a sample point $\omega\in \Omega$ of identically distributed ${\mathcal{X}}$-valued random variables $(X_m)_{m=1}^k$.
The independence of $(X_m)_{m=1}^k$ is often assumed
and 
the \emph{maximum likelihood estimation} is a method of statistical inference using i.i.d.\,random variables.
To be precise, for given observed data $\bm{x}_\ast \in {\mathcal{X}}^k$,
the law of an unknown ${\mathcal{X}}$-valued random variable  is inferred as a maximizer of the function given by
\[
p\mapsto e_{\bm{x}_\ast}(p^{\otimes k})=p^{\otimes k}(\bm{x}_\ast)
=\prod_{m=1}^k p(\pi_m^k(\bm{x}_\ast))
\quad\text{on a subset $\mathcal{P}$ in $\mathcal{P}(\mu)$},
\]
where 
$\pi_m^k\colon \mathcal{X}^k \to \mathcal{X}$ denotes the projection onto the $m$th coordinate.
Existence and uniqueness of maximizers for the above function are not necessarily guaranteed.

To see the validity of the maximum likelihood estimation, we consider an independent toss of an unfair coin as a typical and simple example. 
Let 
${\mathcal{X}}\coloneqq \{{\rm head}, {\rm tail}\}$
and $\mu$ be a counting measure.
Then $\mathcal{P}(\mu)$ is an exponential family, where an exponential coordinate representation $(\mathcal{P}(\mu),\theta, T,c, \psi)$ is given by 
\begin{align*}
\theta(p)&=\log p(\text{head})-\log p(\text{tail}),
&&\psi(p)=-\log p(\text{tail}),\qquad
 \text{for }p\in \mathcal{P}(\mu), \\
T(\text{head})&=1, \qquad
T(\text{tail})=0,
&&c(\text{head})=c(\text{tail})=0.
\end{align*}
Given $\bm{x}_\ast\in \mathcal{X}^k$ with $\#\{ m\mid \pi_m^k(\bm{x}_\ast)={\rm head} \}\neq 0,k$, 
it is natural to infer the probability density $p_k$ associated to an unfair coin toss from the observed data $\bm{x}_\ast$ as 
\[
p_k(\text{head})=\frac{\#\{ m \mid \pi_m^k(\bm{x}_\ast)={\rm head} \} }{k},
\]
which maximizes the function $p\mapsto e_{\bm{x}_\ast}(p^{\otimes k})$ on $\mathcal{P}(\mu)$.
Thus  the maximum likelihood estimation naturally appears.
Repeating coin tosses and observing more data will improve the accuracy of the estimate $p_k$. 
Indeed, if $p_\ast\in \mathcal{P}(\mu)$ describes the distribution of the unfair coin 
and we choose i.i.d.\,random variables $(X_m)_{m=1}^k$ with distribution $p_\ast\mu$,
then the strong law of large numbers gives
\begin{align*}
\frac1k \sum_{m=1}^k (T\circ X_m)
\xrightarrow{k\to\infty}
\int_\Omega T(X(\omega))d\mathbb{P}(\omega)
=\mathbb{I}_T(p_\ast)
\quad\text{$\mathbb{P}$-a.s.,}
\end{align*}
that is, 
the average of heads in $k$ coin tosses converges to the probability of heads in the coin toss 
almost surely as $k\to \infty$.
Since $\mathbb{I}_T\colon \mathcal{P}(\mu)\to (0,1)$ is bijective, an infinite number of coin tosses tells the true distribution of the unfair coin.

This fact is extended to  an exponential family as follows
(see \cite{Amari}*{Chapter~2.8.3} for a formal discussion).
\begin{proposition}\label{convex}
For $\Phi\colon \mathcal{X}\to \mathbb{R}^n$ and $k\in \mathbb{N}$,
define $\Phi^{+k}\colon \mathcal{X}^k\to \mathbb{R}^n$~by
\[
\Phi^{+k}(\bm{x})\coloneqq\sum_{m=1}^k \Phi(\pi_m^k(\bm{x})).
\]

Let $\mathcal{M}$ be an exponential family in $\mathcal{P}(\mu)$ equipped with an exponential coordinate representation $(\mathcal{M},\theta,T,c, \psi)$.
\begin{enumerate}
\setlength{\leftskip}{-15pt}
\item\label{712}

For ${\bm x_\ast}\in \mathcal{X}^k$ and $p_\ast \in \mathcal{M}$, 
if  $p_\ast$  maximizes  the function $p\mapsto e_{\bm{x}_\ast}(p^{\otimes k})$ on $\mathcal{M}$,
then $\mathbb{I}_{T^{+k}}(p_\ast^{\otimes k})=T^{+k}(\bm{x}_\ast)$ holds and 
$p_\ast$ is a unique minimizer~of 
\[
p\mapsto \KL (\rho,p^{\otimes k})\quad \text{on } \mathcal{M}
\]
for $\rho\in \mathcal{P}(\mathcal{X}^k)$ 
with 
\[
\mathbb{I}_{r\log r}, \mathbb{I}_{c^{+k}},\mathbb{I}_{T^{+k}}\in \mathbb{R}^{\{\rho\}} \text{ and }
\mathbb{I}_{T^{+k}}(\rho)=T^{+k}(\bm{x}_\ast).
\]
\item\label{713}
For $p\in \mathcal{M}$,
let $(X_k)_{k\in\mathbb{N}}$ be ${\mathcal{X}}$-valued i.i.d.\,random variables with distribution~$p\mu$.
Then 
\begin{align*}
\frac1k T^{+k}\circ (X_1,\ldots,X_k)
\xrightarrow{k\to\infty}\mathbb{I}_{T}(p)
\quad \text{$\mathbb{P}$-a.s.},
\end{align*}
and quantitatively, for $\varepsilon>0$,
\begin{align*}
\mathbb{P}\left( \left|\frac1k T^{+k}\circ (X_1,\ldots,X_k)-\mathbb{I}_{T}(p)\right| >\varepsilon \right)
&\leq \frac{1}{k\varepsilon^2} 
\int_{\mathcal{X}} |T(x)-\mathbb{I}_T(p)|^2 p(x)d\mu(x).
\end{align*}
\end{enumerate}
\end{proposition}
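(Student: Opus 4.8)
The plan is to lift the whole problem to the exponential family $\mathcal{M}^{\otimes k}\coloneqq\{p^{\otimes k}\mid p\in\mathcal{M}\}$ in $\mathcal{P}(\mu^{\otimes k})$. By~\eqref{expindep}, writing $T^{+k}\coloneqq\sum_{m=1}^k T\circ\pi_m^k$ and $c^{+k}$ likewise and identifying $p$ with $p^{\otimes k}$, the quintuple $(\mathcal{M}^{\otimes k},\theta,T^{+k},c^{+k},k\psi)$ is an exponential coordinate representation on $\mathcal{M}^{\otimes k}$; in the exponential case $\chi_{h,\tau}=\mathrm{id}$ and $\mathbb{I}_{\chi_{h,\tau}}=\mathbb{I}\equiv1$, Condition~\eqref{C4} holds by Lemma~\ref{lem}\eqref{462}, so Lemma~\ref{lem}\eqref{463} applies to $\mathcal{M}^{\otimes k}$ and yields $\partial_i(k\psi)|_p=\mathbb{I}_{(T^{+k})_i}(p^{\otimes k})$.

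For Claim~\eqref{712}, the log-likelihood on $\mathcal{M}$ is $p\mapsto\langle\theta(p),T^{+k}(\bm{x}_\ast)\rangle-c^{+k}(\bm{x}_\ast)-k\psi(p)$. Since $\mathcal{M}$ is a manifold without boundary, a maximizer $p_\ast$ is a stationary point, and differentiating along $\partial_i$ (with $\partial_i\theta^j=\delta_i^j$) and inserting the expression for $\partial_i(k\psi)$ gives the estimating equation $\mathbb{I}_{T^{+k}}(p_\ast^{\otimes k})=T^{+k}(\bm{x}_\ast)$. For $\rho$ as in the statement, expanding $\log p^{\otimes k}=\langle\theta(p),T^{+k}\rangle-c^{+k}-k\psi(p)$ and using $\mathbb{I}_{T^{+k}}(\rho)=T^{+k}(\bm{x}_\ast)$ together with $\|\rho\|_{L^1(\mu^{\otimes k})}=1$ gives, for every $p\in\mathcal{M}$,
\[
\KL(\rho,p^{\otimes k})=\mathbb{I}_{r\log r}(\rho)+\mathbb{I}_{c^{+k}}(\rho)-c^{+k}(\bm{x}_\ast)-\log e_{\bm{x}_\ast}(p^{\otimes k}),
\]
with all terms finite under the stated integrability hypotheses on $\rho$. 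Hence $p\mapsto\KL(\rho,p^{\otimes k})$ and $p\mapsto-\log e_{\bm{x}_\ast}(p^{\otimes k})$ differ by a $p$-independent constant, so $p_\ast$ is a minimizer.

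For uniqueness I would first reduce to a standard representation ($c=0$) via the construction in Remark~\ref{standard}, using that $\KL$ and the functionals $\mathbb{I}_\Phi$ are invariant under rescaling the reference measure by a fixed positive density, so the hypotheses on $\rho$ carry over. Then Proposition~\ref{maxcor}, applied to $\mathcal{M}^{\otimes k}$ with the constraint $\mathbb{I}_{T^{+k}}(\cdot)=T^{+k}(\bm{x}_\ast)$ (the constraint on $\mathbb{I}_{\mathrm{id}}$ being automatic since $\mathbb{I}\equiv1$), shows that any minimizer $p_\ast'$ satisfies $\KL(p_\ast^{\otimes k},(p_\ast')^{\otimes k})=0$, whence $p_\ast^{\otimes k}=(p_\ast')^{\otimes k}$ $\mu^{\otimes k}$-a.e.; integrating out all but one coordinate gives $p_\ast=p_\ast'$ $\mu$-a.e., and since points of a statistical model are $\mu$-a.e.\ distinct, $p_\ast=p_\ast'$. (Alternatively, strict concavity of the log-likelihood in the $\theta$-chart, coming from the positive definiteness of $(\partial_i\partial_j\psi)$ in Corollary~\ref{psipos}, forces uniqueness once existence is granted.) This is the step I expect to be delicate: making Proposition~\ref{maxcor} legitimately applicable, i.e.\ disposing of the integrability of the $(h,\tau)$-entropy density along $\mathcal{M}^{\otimes k}$, which the reduction to $c=0$ is designed to handle.

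For Claim~\eqref{713}, the random variables $(T\circ X_m)_{m\in\mathbb{N}}$ are i.i.d.\ $\mathbb{R}^n$-valued with mean $\int_{\mathcal{X}}T(x)p(x)d\mu(x)=\mathbb{I}_T(p)$ and finite second moment $\mathbb{I}_{\langle T,T\rangle}(p)$, both finite on an exponential family (cf.\ the remark after Lemma~\ref{lem}). Kolmogorov's strong law of large numbers, applied coordinatewise, gives $\frac1k T^{+k}\circ(X_1,\dots,X_k)\to\mathbb{I}_T(p)$ $\mathbb{P}$-a.s. For the quantitative estimate, independence and centering give $\mathbb{E}\bigl[\bigl|\frac1k\sum_{m=1}^k T(X_m)-\mathbb{I}_T(p)\bigr|^2\bigr]=\frac1k\int_{\mathcal{X}}|T(x)-\mathbb{I}_T(p)|^2 p(x)d\mu(x)$, and Chebyshev's inequality yields the claimed bound. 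This part is routine.
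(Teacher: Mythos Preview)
Your proof is correct and aligns with the paper's approach: the paper omits the proof entirely, noting that Claim~\eqref{712} is well-known and Claim~\eqref{713} is the strong law of large numbers plus Chebyshev, and pointing to the proof of Theorem~\ref{mainthm33} as the template. That template---first-order stationarity together with Lemma~\ref{lem}\eqref{463} and Corollary~\ref{psipos}, then Proposition~\ref{maxcor} for uniqueness, and Chebyshev for the quantitative bound---is precisely what you carry out; your explicit identity $\KL(\rho,p^{\otimes k})=\mathrm{const}(\rho)-\log e_{\bm{x}_\ast}(p^{\otimes k})$ is a clean shortcut available in the classical exponential case that makes the equivalence of the two optimization problems immediate.
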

By Proposition~\ref{convex}\eqref{712}, the uniqueness of 
maximum likelihood estimators follows.
We observe from Proposition~\ref{convex}\eqref{713} that an infinite number of trials determines the true distribution 
since $(\mathcal{M},\mathbb{I}_T)$ is a coordinate system. 
Moreover, the quantitative error is expressed as the Fisher metic by Lemma~\ref{lem}\eqref{463}.
We omit the proof of the proposition
since  Claim~\eqref{712} is well-known and  Claim~\eqref{713} is exactly the strong law of large numbers and 
Chebyshev's inequality.
In addition, we generalize the proposition  to a deformed exponential family (see Theorem~\ref{mainthm33}) 
and the proof of the proposition is similar to that of the generalization.

For the generalization,
we introduce the identical repeatability of a deformed exponential family.
\begin{definition}[restatement of Definition~\ref{repeat}]
Let $(h,\tau,I)\in \mathcal{G}$. 
An $\exp_{h,\tau}$-family $\mathcal{M}$ is \emph{identically repeatable}
if, for each $k\in \mathbb{N}$,
there exist a positive and increasing smooth function $\alpha_k$ on $I$, a positive smooth function $b_k$ on $\mathcal{M}$ and an injection $\iota_k \colon \mathcal{M}\to \mathcal{P}_I(\mu^{\otimes k})$ 
such that $\iota_k(\mathcal{M})$ is a statistical model in $\mathcal{P}(\mu^{\otimes k})$ and,
for each global $\exp_{h,\tau}$-coordinate representation $(\mathcal{M},\theta,T,c,\psi)$ on $\mathcal{M}$,
there exists $\psi_k\in C^{\infty}(\mathcal{M})$ such that  
\[
\iota_k(p)({\bm x})
=\alpha_k
\left(
\exp_{h,\tau}\left(  b_k(p)  \left( \langle \theta(p), T^{+k}({\bm{x}}) \rangle-c^{+k}( {\bm{x}}) \right) -\psi_k(p) \right)
\right)
\]
for $p\in \mathcal{M}$ and ${\bm x} \in {\mathcal{X}}^k$, and furthermore
\begin{align}\label{marginal}
\int_{\mathcal{X}^{k+k'}}\iota_{k+k'}(p)({\bm x},{\bm y})d\mu^{\otimes k'}(\bm{ y})
=\iota_k(p)({\bm x})
\quad \text{for }k'\in \mathbb{N}.
\end{align}
We call the injection $(\iota_k)_{k\in \mathbb{N}}$ the \emph{repetition}.

For an identically repeatable $\exp_{h,\tau}$-family $\mathcal{M}$ equipped with a repetition $(\iota_k)_{k\in\mathbb{N}}$
and $p\in \mathcal{M}$, 
we say random variables $(X_k)_{k\in \mathbb{N}}$ 
are \emph{$(h,\tau)$-dependent and identically distributed according to~$p$}
if the law of $(X_m)_{m=1}^k$ is $\iota_k(p) \mu^{\otimes k}$ for each $k\in \mathbb{N}$.
\end{definition}
To verify the identical repeatability of an $\exp_{h,\tau}$-family~$\mathcal{M}$,
it is enough to consider one global $\exp_{h,\tau}$-coordinate representation $(\mathcal{M},\theta, T,c,\psi)$ rather than all of  them by Proposition~\ref{minimal}.

To give an identically repeatable deformed exponential family
excepted for exponential families,
we focus on the probability densities on  $\mathcal{X}=\mathbb{R}^d$ with mean-like and covariance-like parameters
and on the case that $\chi_{h,\tau}$ is given by a power function.
\begin{definition}
For $q>0$, define the three smooth functions $\ln_q, h_q, f_q\colon (0,\infty)\to \mathbb{R}$ by 
\[
\ln_q(r)\coloneqq \int_{1}^r t^{-q}dt,\quad
h_q(r)\coloneqq \int_{1}^r \ln_q(t)dt, \quad
f_q(r)\coloneqq qr \ln_{q}(r^{\frac1{q}})-r.
\]
Set 
\[
\chi_q\coloneqq \chi_{(h_q, \mathrm{id}_{(0,\infty)})},
\qquad
\exp_q\coloneqq \exp_{(h_q, \mathrm{id}_{(0,\infty)})}.
\]
\end{definition}
We easily find $\chi_q(t)=t^q$ for $t\in (0,\infty)$ and  $(h_q, \mathrm{id}_{(0,\infty)}, (0,\infty)), (f_q,\chi_{q}, (0,\infty))\in \mathcal{G}$.
Moreover, we have
\[
\ell_{f_q,\chi_{q}}=\ell_{h_q,\mathrm{id}_{(0,\infty)}}=\ln_q,
\]
implying $\exp_{f_q,\chi_q}=\exp_q$.

Let us  consider an $\exp_q$-family in $\mathcal{P}(\mathcal{L}^d)$ 
with mean-like and covariance-like parameters such as 
\[
p(x)=\exp_q\left(-\langle x-v, S (x-v)\rangle-\lambda   \right)
\quad \text{for }x\in \mathbb{R}^d,
\]
where $v\in \mathbb{R}^d$, $S$ is a positive definite symmetric matrices of size~$d$ 
and $\lambda\in \mathbb{R}$ is a normalization.
We see that  the support of such $p$ is $\mathbb{R}^d$ if and only if  $q\geq 1$.
For $q=1$, we have 
$\ln_1(r)=\log (r)$ for $r>0$ and
$\exp_1(u)=e^{u}$ for $u\in \mathbb{R}$.
If  $q>1$
then 
\[
\ln_q(r)\coloneqq \frac{r^{1-q}-1}{1-q} \quad \text{for }r>0, \qquad
\exp_q(u)\coloneqq \begin{cases} 
\infty &\text{if } u \geq \dfrac{1}{q-1}. \\
[1+(1-q)u ]^{\frac{1}{1-q}} & \text{if } u<\dfrac{1}{q-1}.
\end{cases}
\]
\begin{definition}
We denote by $\mathrm{Sym}^+(d,\mathbb{R})$ the space of positive definite symmetric matrices of size~$d$.
For $S\in \mathrm{Sym}^+(d,\mathbb{R})$ and $x\in \mathbb{R}^d$, set 
\[
|x|^2_S\coloneqq \langle x, S x\rangle.
\]
For $1\leq i \leq d$,
we denote by $e_i$ the $d$-tuple consisting of zeros except for a 1 in the $i$th spot.
Define $F_i,F_{ij}\colon \mathbb{R}^d\to \mathbb{R}$ respectively by
\begin{align*}
F_i(x)&\coloneqq \langle x,e_i\rangle,\qquad
F_{ij}(x)\coloneqq\langle x,e_i\rangle\langle x,e_j\rangle= F_i(x)F_j(x),
\quad\text{for }x\in \mathbb{R}^d.
\end{align*}
\end{definition}
Let $v\in \mathbb{R}^d$ and $S\in \mathrm{Sym}^+(d,\mathbb{R})$.
For $q\geq 1,a>0$ and $l\geq 0$, 
we easily see that  there exists $\lambda\in \mathbb{R}$ such that
\[
\exp_q\left(-|x-v|^2_S-\lambda   \right)^a
\]
is contained in $\mathcal{P}(\mathcal{L}^d)$  and has a finite $l$ th moment 
if and only if 
\[
 (d+l)(q-1)<2a.
\]
For $a>0$ and $\lambda\in \mathbb{R}$, it tuns out that 
\[
\int_{\mathbb{R}^d} \exp\left(-|x-v|_S^2-\lambda   \right)^a dx
=e^{-a\lambda} \left(\det \frac{aS}{\pi}\right)^{-\frac{1}{2}}.
\]
For $q>1, a>0$ with  $d(q-1)<2a$ and  $\lambda>-(q-1)^{-1}$,
we calculate 
\begin{align*}
\int_{\mathbb{R}^d} \exp_q\left(-|x-v|_S^2-\lambda   \right)^a dx 
&=\int_{\mathbb{R}^d} [1+(q-1)( |x|^2_S+\lambda)]^{\frac{a}{1-q}} dx\\ 
&=\frac{ \det\left(\frac{q-1}{\pi}S\right)^{-\frac12} }{\Gamma(\frac{d}{2})}[1+(q-1)\lambda]^{\frac{a}{1-q}+\frac{d}{2}}\int_0^\infty (1+r)^{\frac{a}{1-q}} r^{\frac{d}{2}-1}dr\\
&= \det\left(\frac{q-1}{\pi}S\right)^{-\frac12} \frac{\Gamma(\frac{a}{q-1}-\frac{d}{2})}{\Gamma(\frac{a}{q-1})} [1+(q-1)\lambda]^{\frac{a}{1-q}+\frac{d}{2}},
\end{align*}
where we used the property 
\[
\int_0^\infty \frac{r^{u-1}}{(1+r)^{v}}dr=\frac{\Gamma(u)\Gamma(v-u)}{\Gamma(v)}\quad \text{for }u>0, v+u>0.
\]
This in turn justifies the following definition.
Throughout the rest of the paper, we always assume $q\geq 1$ and $d(q-1)<2$.
\begin{definition}
For $v\in \mathbb{R}^d$ and $S\in \mathrm{Sym}^+(d,\mathbb{R})$, define $p_{q}^{v,S}\in \mathcal{P}(\mathcal{L}^d)$ by
\[
p_{q}^{v,S}(x)\coloneqq \exp_q\left(- |x-v|_S^2-\lambda_{q}(S)\right) \quad \text{for }x\in \mathbb{R}^d,
\]
where 
\[
\lambda_{q}(S)\coloneqq 
-\ln_q\left(
\left[\det\left(\frac{q-1}{\pi}S\right)^{\frac{1}2} \frac{\Gamma(\frac{1}{q-1})}{\Gamma(\frac{1}{q-1}-\frac{d}{2})}\right]^{\frac{2}{2+d(1-q)}}
\right)
\]
if $q>1$ and $\lambda_{1}(S)\coloneqq -\log \det(S/\pi)^{1/2}$.
Set 
\begin{align*}
\mathcal{M}_{q,d}&\coloneqq \{p_{q}^{v,S}\mid v\in \mathbb{R}^d, S\in \mathrm{Sym}^+(d,\mathbb{R}) \},\\
\mathcal{M}_{q,d}^{I_d}&\coloneqq \{p_{q}^{v,I_d}\mid v\in \mathbb{R}^d \},\\
\mathcal{M}_{q,d}^{\tr}&\coloneqq \{ p_{q}^{v,S}\mid  v\in \mathbb{R}^d, S\in \mathrm{Sym}^+(d,\mathbb{R})  \text{ with }\tr S=d \}.
\end{align*}
\end{definition}
Since $\mathcal{M}_{q,d}^{I_d}=\mathcal{M}_{q,d}^{\tr}$ holds for $d=1$, 
we implicitly  assume $d\geq 2$ when we treat $\mathcal{M}_{q,d}^{\tr}$.
For $1\leq i,j \leq d$, 
define $\vartheta^i, \vartheta^{ij}\colon\mathcal{M}_{q,d}\to \mathbb{R}$ by
\begin{align*}
\vartheta^{i}(p_{q}^{v,S})&\coloneqq 2\langle e_i, Sv\rangle, \qquad
\vartheta^{ij}(p_{q}^{v,S})\coloneqq -\left(2-\delta_{ij}\right) \langle e_i, S e_j\rangle.
\end{align*}
We also define $\psi:\mathcal{M}_{q,d}\to \mathbb{R}$ by
\[
\psi(p_{q}^{v,S})\coloneqq |v|_S^2+\lambda_{q}(S).
\]
Then $\mathcal{M}_{q,d}, \mathcal{M}_{q,d}^{I_d}, \mathcal{M}_{q,d}^{\tr}$ are $\exp_q$-families,
where their  deformed exponential coordinate representation are respectively given by 
\begin{align}\notag 
&\left(\mathcal{M}_{q,d}, ( \vartheta^{i}, \vartheta^{ij})_{1\leq i \leq j \leq d }, ( F_{i}, F_{ij})_{1\leq i \leq j \leq d}, 0, \psi\right),\\ \label{qd}
&\left(\mathcal{M}_{q,d}^{I_d}, ( \vartheta^{i})_{1\leq i \leq d }, ( F_{i})_{1\leq i \leq d}, |\cdot|^2, \psi\right),\\ \notag
& \left(
\mathcal{M}_{q,d}^{\tr},
(\vartheta^{i}, \vartheta^{ij})_{1\leq i\leq j \leq d  \text{ and } (i,j) \neq (d,d)}, 
( F_{i}, F_{ij}-\delta_{ij}F_{dd})_{1\leq i\leq j \leq d  \text{ and } (i,j) \neq (d,d)}, d\langle \cdot,e_d\rangle^2, \psi\right).
\end{align}

We will show the identical repeatability of $\mathcal{M}_{q,d}^{\tr}$ and $\mathcal{M}_{q,d}^{I_d}$.
\begin{definition}
For $S\in \mathrm{Sym}^+(d,\mathbb{R})$
and $k\in \mathbb{N}\cup\{0\}$,
we define 
\begin{align*}
a_{k}&\coloneqq 1+ (k+3) \cdot \frac{d(q-1)}{2},
\qquad q_{k}\coloneqq 1+\frac{q-1}{a_{k}},\\
\beta_{k}(S)&\coloneqq
\begin{dcases}
 \det\left(\frac{q-1}{\pi}S\right)^{-\frac{1}{d}} \Gamma\left(\frac{1}{q_k-1}\right)^{1-q_k} 
& \text{if $q>1$},\\
  \det(S)^{-\frac1d} &\text{if $q=1$},
\end{dcases}   \\
\nu_{k}
&\coloneqq 
\begin{dcases}
-\ln_q
\left(\Gamma\left(\frac{1}{q_k-1}\right)^{\frac1{a_{k}}}\Gamma\left(\frac{1}{q_0-1}\right)^{-\frac{1}{a_{0}}}\right)
& \text{if $q>1$},\\
\frac{dk}{2}\log \pi&\text{if }q=1.
\end{dcases}
\end{align*}
In the case of $k\in \mathbb{N}$, 
we define
\[
\rho_{q,k}^{v,S}({\bm x})
=\exp_{q}\left(-\sum_{m=1}^k | \pi^k_m(\bm{x})-v|^2_{\beta_{k}(S)S} -\nu_{k}\right)^{a_{k}}
\quad\text{for $v\in \mathbb{R}^d$ and $\bm{x}\in (\mathbb{R}^d)^k$}.
\]
\end{definition}
A direct calculation yields $\rho_{q,k}^{v,S}\in \mathcal{P}(\mathcal{L}^{dk})$.
Note that
\[
 \rho_{q,k}^{v,S}=\rho_{q,k}^{v,sS} \quad \text{for $s>0$}\quad\text{and}\quad  
\det (\beta_{k}(S)S)=\det (\beta_{k}(I_d)I_d).
\]
Moreover, 
\begin{align*}
\{\rho_{q,k}^{v,S} \mid v\in \mathbb{R}, S\in \mathrm{Sym}^+(d,\mathbb{R}) \}
=\{\rho_{q,k}^{v,S} \mid v\in \mathbb{R}, \mathrm{Sym}^+(d,\mathbb{R}), \mathrm{tr}S=d \},\quad
\{\rho_{q,k}^{v,I_d} \mid v\in \mathbb{R} \}
\end{align*}
are statistical models in $\mathcal{P}(\mathcal{L}^{dk})$
since they are submanifolds of the statistical model~$\mathcal{M}_{q_k, dk}$
(see Remark~\ref{4th}\eqref{fourth}),
where Condition~\eqref{C4} holds by the dominated convergence theorem.
\begin{remark}\label{4th}
\begin{enumerate}
\setlength{\leftmargin}{23pt}
\setlength{\leftskip}{-15pt}
\item
For $v\in \mathbb{R}^d$, 
$p_{q}^{v,I_d}= \rho_{q,1}^{v,I_d}$ holds if and only if $q=1$.
\item
\setlength{\leftskip}{-15pt}
We can define an injection  $\iota_k$ from
\[
\mathcal{M}_{q,d}^{\det}\coloneqq \{ p_{q}^{v,S}\in \mathcal{M}_{q,d}\mid \det S=1 \}
\]
to $\mathcal{P}(\mathcal{L}^{dk})$ sending $p_q^{v,S}$ to $\rho_{q,k}^{v,S}$.
However, we prefer $\mathcal{M}_{q,d}^{\tr }$ to $\mathcal{M}_{q,d}^{\det}$
since $\mathcal{M}_{q,d}^{\det}$ is a deformed exponential family if and only if $d=1$.
\item\label{fourth}
We find  that $\iota_{k}(\mathcal{M}_{q,d})$ is a submanifold of $\mathcal{M}_{q_k,dk}$ since we have
\[
\exp_{q_k}(u)=
[1+(1-q_k) u]^{\frac{1}{1-q_k}}
=\exp_{q}(a_{k}^{-1}u)^{a_{k}}
\quad \text{for }u<\frac{1}{q_k-1}=\frac{a_{k}}{q-1}.
\]
We observe from
\begin{align*}
(dk+4)(q_k-1)
=\frac{(dk+4)(q-1)}{a_{k}}
<\frac{2+(k+3)d(q-1)}{a_{k}}=2,
\end{align*}
that the fourth moment of any element in $\mathcal{M}_{q_k,dk}$ is finite
and $\iota_{k}(\mathcal{M}_{q,d})$ becomes a statistical model.
Similarly, we derive from $(dk+2)(q_k-1)<2q_k$ that 
\[
\quad \mathbb{I}_{\chi_{q_k}},
\mathbb{I}_{s_{f_{q_k},\chi_{q_k}}},
\mathbb{I}_{(F_i\circ \pi_m^k)\chi_{q_k}},
\mathbb{I}_{(F_{ij}\circ \pi_m^k)\chi_{q_k}} \in \mathbb{R}^{\mathcal{M}_{q_k,dk}}
 \quad\text{for }1\leq i,j\leq d \text{ and }1\leq m\leq k.
\]
\end{enumerate}
\end{remark}
\begin{proposition}
Both  $\mathcal{M}_{q,d}^{\tr}$ and $\mathcal{M}_{q,d}^{I_d}$ are  identically repeatable $\exp_q$-families.
\end{proposition}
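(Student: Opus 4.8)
The plan is to take the repetition to be given by the densities $\rho_{q,k}^{v,S}$ already introduced: put $\iota_k(p_q^{v,S}):=\rho_{q,k}^{v,S}$ (imposing $\tr S=d$ on $\mathcal{M}_{q,d}^{\tr}$ and $S=I_d$ on $\mathcal{M}_{q,d}^{I_d}$), $\alpha_k:=(\cdot)^{a_k}$ on $I=(0,\infty)$, $b_k(p_q^{v,S}):=\beta_k(S)$, and $\psi_k(p_q^{v,S}):=\beta_k(S)\,k\,|v|_S^2+\nu_k$; for $\mathcal{M}_{q,d}^{I_d}$ this specialises to the constant $b_k\equiv\beta_k(I_d)$ and $\psi_k(p_q^{v,I_d})=\beta_k(I_d)k|v|^2+\nu_k$. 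Since $a_k\geq 1$, the map $\alpha_k$ is positive, increasing and smooth on $I$; since $S\mapsto\det S$ is a positive polynomial on $\mathrm{Sym}^+(d,\mathbb{R})$ and $\Gamma$ is smooth and positive, $b_k$ is positive and smooth and $\psi_k$ is smooth on the respective families. That each $\rho_{q,k}^{v,S}$ lies in $\mathcal{P}_I(\mathcal{L}^{dk})$ and that $\iota_k$ has image a statistical model in $\mathcal{P}(\mathcal{L}^{dk})$ were recorded above (via the inclusion $\iota_k(\mathcal{M}_{q,d})\subset\mathcal{M}_{q_k,dk}$ and the finiteness of fourth moments, see Remark~\ref{4th}), and by the remark following Definition~\ref{repeat} it suffices to test the defining identity against the single $\exp_q$-coordinate representation in~\eqref{qd}.

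Next I would check injectivity of $\iota_k$ and the defining formula. Using $\exp_{q_k}(u)=\exp_q(a_k^{-1}u)^{a_k}$ from Remark~\ref{4th}\eqref{fourth}, the density $\rho_{q,k}^{v,S}$ is, up to its normalising constant, the member of $\mathcal{M}_{q_k,dk}$ with location parameter $(v,\dots,v)\in(\mathbb{R}^d)^k$ and scale matrix $\beta_k(S)\,\mathrm{diag}(S,\dots,S)$; since the coordinate map of a deformed exponential family is injective (Lemma~\ref{ordn}), the equality $\rho_{q,k}^{v,S}=\rho_{q,k}^{v',S'}$ forces $v=v'$ and $\beta_k(S)S=\beta_k(S')S'$, and taking traces (using $\tr S=d$) or specialising to $S=S'=I_d$ gives $S=S'$. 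The algebraic core of the defining formula is the identity
\[
\langle\theta(p_q^{v,S}),T(x)\rangle-c(x)=-|x-v|_S^2+|v|_S^2\qquad(x\in\mathbb{R}^d),
\]
which for $\mathcal{M}_{q,d}^{I_d}$ is immediate from $T=(F_i)_i$, $c=|\cdot|^2$, $\theta(p_q^{v,I_d})=2v$, and for $\mathcal{M}_{q,d}^{\tr}$ follows by writing $|x|_S^2=\sum_{i\le j}(2-\delta_{ij})\langle e_i,Se_j\rangle F_{ij}(x)$ and eliminating the missing coordinate $(d,d)$ through $\langle e_d,Se_d\rangle=d-\sum_{i<d}\langle e_i,Se_i\rangle$ together with the term $c=dF_{dd}$. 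Summing over the $k$ blocks gives $\langle\theta(p),T^{+k}(\bm x)\rangle-c^{+k}(\bm x)=-\sum_{m=1}^k|\pi^k_m(\bm x)-v|_S^2+k|v|_S^2$, so the choices of $\alpha_k,b_k,\psi_k$ make $\alpha_k(\exp_q(b_k(p)(\langle\theta(p),T^{+k}(\bm x)\rangle-c^{+k}(\bm x))-\psi_k(p)))$ equal to $\exp_q(-\beta_k(S)\sum_m|\pi^k_m(\bm x)-v|_S^2-\nu_k)^{a_k}=\rho_{q,k}^{v,S}(\bm x)=\iota_k(p)(\bm x)$.

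The main obstacle is the marginal condition~\eqref{marginal}, namely that integrating $\rho_{q,k+k'}^{v,S}$ against $d\mathcal{L}^{dk'}$ in its last $dk'$ variables returns $\rho_{q,k}^{v,S}$; the point is that $a_k,\beta_k,\nu_k$ are rigged exactly so that this holds. For $q>1$ (the case $q=1$ being the product Gaussian, and in any event an exponential family, which is automatically identically repeatable) one writes
\[
\rho_{q,k+k'}^{v,S}(\bm x,\bm y)=\Bigl[\,1+(q-1)\nu_{k+k'}+(q-1)\beta_{k+k'}(S)\Bigl(\textstyle\sum_i|x_i-v|_S^2+\sum_j|y_j-v|_S^2\Bigr)\Bigr]^{a_{k+k'}/(1-q)}
\]
and performs the $\bm y$-integral by the Beta-function identity $\int_0^\infty r^{u-1}(1+r)^{-v}\,dr=\Gamma(u)\Gamma(v-u)/\Gamma(v)$ recalled in the text; the output is a constant multiple of $[\,1+(q-1)\nu_{k+k'}+(q-1)\beta_{k+k'}(S)\sum_i|x_i-v|_S^2\,]^{a_k/(1-q)}$, the exponent becoming $a_k$ precisely because $a_{k+k'}-a_k=\tfrac{dk'(q-1)}{2}$.

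It then remains to reconcile this base, as an affine function of $\sum_i|x_i-v|_S^2$, with the base of $\rho_{q,k}^{v,S}$; matching the quadratic and the constant terms, this reduces to the single scalar identity
\[
\frac{1+(q-1)\nu_{k+k'}}{1+(q-1)\nu_k}=\frac{\beta_{k+k'}(S)}{\beta_k(S)},
\]
and using $1+(q-1)\nu_k=\Gamma(\tfrac{a_k}{q-1})^{-(q-1)/a_k}\Gamma(\tfrac{a_0}{q-1})^{(q-1)/a_0}$ together with $\beta_k(S)=\det(\tfrac{q-1}{\pi}S)^{-1/d}\Gamma(\tfrac{a_k}{q-1})^{-(q-1)/a_k}$, both sides equal $\Gamma(\tfrac{a_{k+k'}}{q-1})^{-(q-1)/a_{k+k'}}\big/\Gamma(\tfrac{a_k}{q-1})^{-(q-1)/a_k}$. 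Hence the $\bm y$-marginal of $\rho_{q,k+k'}^{v,S}$ is proportional to $\rho_{q,k}^{v,S}$, and since both are probability densities the proportionality constant is $1$, so~\eqref{marginal} holds. This shows that $(\iota_k)_{k\in\mathbb{N}}$ is a repetition for each of $\mathcal{M}_{q,d}^{\tr}$ and $\mathcal{M}_{q,d}^{I_d}$, and hence both are identically repeatable $\exp_q$-families.
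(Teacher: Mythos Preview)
Your proof is correct and follows essentially the same approach as the paper: you choose the same data $\alpha_k(r)=r^{a_k}$, $b_k(p_q^{v,S})=\beta_k(S)$, $\psi_k(p_q^{v,S})=k\beta_k(S)|v|_S^2+\nu_k=k|v|_{\beta_k(S)S}^2+\nu_k$, identify $\iota_k(p_q^{v,S})$ with $\rho_{q,k}^{v,S}$, and verify the marginal condition via the same Beta-integral computation and the same base-proportionality identity $B^{v,S}(\bm x)=\tfrac{\beta_{k+k'}(S)}{\beta_k(S)}[1+(q-1)\nu_k+(q-1)\beta_k(S)\sum_m|x_m-v|_S^2]$. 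The only cosmetic difference is that you invoke the fact that both the marginal and $\rho_{q,k}^{v,S}$ are probability densities to pin down the proportionality constant, whereas the paper tracks the Gamma factors explicitly; your shortcut is legitimate and slightly cleaner.
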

\begin{proof}
For $\mathcal{M}=\mathcal{M}_{q,d}^{\tr}, \mathcal{M}_{q,d}^{I_d}$,
let $(\mathcal{M}, \theta, T, c,\psi)$ be its $\exp_q$-coordinate representation 
given in \eqref{qd}.
If we define 
\begin{align*}
\alpha_{k}(r) &\coloneqq r^{a_{k}},\qquad
b_{k}(p_q^{v,S}) \coloneqq  \beta_{k}(S),\qquad
\psi_{k}(p_{q}^{v,S})\coloneqq k|v|_{\beta_{k}(S)S}^2+\nu_{k},\\
\iota_{k}(p_q^{v,S})
&\coloneqq
\alpha_{k}
\left(
\exp_{h,\tau}\left(  b_{k}(p_q^{v,S}) \left( \langle \theta(p_q^{v,S}), T^{+k}( \bm{x})\rangle-c^{+k}( \bm{x}) \right) -\psi_{k}(p_q^{v,S}) \right)
\right),
\end{align*}
then $\iota_{k}(p_q^{v,S})=\rho_{q,k}^{v,S}$.
We easily see  $\iota_{k}$ is injective on $\mathcal{M}$.

The rest is to prove the marginal condition~\eqref{marginal}, that is, 
\[
\int_{(\mathbb{R}^d)^{k'}}
\rho_{q,k+k'}^{v,S}(\bm{x},\bm{y})d\bm{y}=\rho_{q,k}^{v,S}(\bm{x})
\quad\text{for }k,k'\in \mathbb{N},\bm{x}\in (\mathbb{R}^{d})^{k},  v\in \mathbb{R}^d \text{ and }S\in \mathrm{Sym}^+(d,\mathbb{R}).
\]
Since the case of $q=1$ is trivial, we assume $q>1$.
Fix $\bm{x}\in (\mathbb{R}^d)^{k}$ and set 
\begin{align*}
B^{v,S}(\bm{x})
&\coloneqq1+(q-1)\nu_{k+k'}+(q-1)\sum_{m=1}^{k}|\pi_{k'}^k(\bm{x})-v|^2_{\beta_{k+k'}(S)S}\\
&=\frac{\beta_{k+k'}(S)}{\beta_{k}(S)}\left[1+(q-1)\nu_{k}+(q-1)\sum_{m=1}^{k}|\pi^k_{m}(\bm{x})-v|_{\beta_{k}(S)S}\right].
 \end{align*}
Note that $\beta_{k+k'}(S)/\beta_{k}(S)$ does not depend on $S$.
A direct calculation provides
\begin{align*}
& \int_{(\mathbb{R}^d)^k}\rho_{q,k+k'}^{v,S}(\bm{x},\bm{y})d\bm{y}\\
&=B^{v,S}(\bm{x})^{\frac{1}{1-q_{k+k'}}}
\det\left(\frac{\beta_{k+k'}(S)}{\pi}S\right)^{-\frac{k'}{2}}
 \frac{1}{\Gamma(\frac{dk'}{2})}
\cdot \left(\frac{B(\bm{x})}{q-1}\right)^{\frac{dk'}{2}}
\int_0^\infty (1+t)^{\frac{1}{1-q_{k+k'}}}t^{\frac{dk'}{2}-1}dr\\
&=B^{v,S}(\bm{x})^{\frac{a_{k}}{1-q}}
\det\left(\frac{(q-1) \beta_{k+k'}(S) }{\pi}S\right)^{-\frac{k'}{2}}
 \frac{\Gamma(\frac{1}{q_k-1})}{\Gamma(\frac{1}{q_{k+k'}-1})}\\
&=\left[ B^{v,S}(\bm{x})
\cdot \frac{\beta_{k}(S) } 
{\beta_{k+k'}(S) }\right]^{\frac{1}{1-q_k}} \\
&=\rho_{q,k}^{v,S}(\bm{x})
\end{align*}
as desired.
\end{proof}

Now we prove a counterpart of Proposition~\ref{convex}, that is, 
if the law of an unknown random variable lies in an exponential family,
then a maximizer of the likelihood function for given data $\bm{x}_\ast \in \mathcal{X}^k$ becomes a unique minimizer  of the dual problem in terms of the Kullback--Leibler divergence.
Moreover,
the law of $X$ is deduced from an infinite trials.
\begin{theorem}\label{mainthm33}
For $k\in \mathbb{N}$, set
\begin{align*}
N_k\coloneqq \frac{d(d+3)}{2}k,
\qquad
T\coloneqq (F_i\circ \pi^k_{m}, F_{ij}\circ  \pi^k_{m})_{1\leq i \leq j\leq d, 1\leq m\leq k}\colon\mathbb{R}^d\to \mathbb{R}^{N_k}.
\end{align*}
\begin{enumerate}
\setlength{\leftskip}{-15pt}
\item\label{791}
Fix ${\bm x_\ast}\in (\mathbb{R}^d)^k$ and  $p_\ast\in \mathcal{M}_{q,d}^{\tr }$. 
If $ \iota_{k}(p_\ast)$ maximizes the function  $e_{\bm{x}_\ast} $ on $\mathcal{M}_{q_k,dk}$,
then 
\begin{align}\label{thmeq1}
\mathbb{I}_{T\chi_{q_k}}(\iota_{k}(p_\ast))=\mathbb{I}_{\chi_{q_k}}(\rho_{q,k}^{0,I_d}) \cdot T(\bm{x}_\ast)
\end{align}
and 
$\iota_{k}(p_\ast)$ is a unique minimizer of $D_{f_{q_k}, \chi_{q_k}}(\rho,\cdot)$ on $\mathcal{M}_{q_k,dk}$ for any $\rho\in \mathcal{P}(\mathcal{L}^{dk})$ 
with 
\begin{align}\label{rhoo}
\begin{split}
&\mathbb{I}_{\chi_{q_k}}, \mathbb{I}_{s_{f_{q_k},\chi_{q_k}}}, \mathbb{I}_{T\chi_{q_k}} \in \mathbb{R}^{\{\rho\}} \text{ and }\\&\mathbb{I}_{\chi_{q_k}}(\rho)=\mathbb{I}_{\chi_{q_k}}(\rho_{q,k}^{0,I_d}),\qquad
\mathbb{I}_{T\chi_{q_k}}(\rho)=\mathbb{I}_{\chi_{q_k}}(\rho_{q,k}^{0,I_d})\cdot T(\bm{x}_\ast).
\end{split}
\end{align}
\item\label{792}
Let $(X_k)_{k\in\mathbb{N}}$ be $(h_q,\mathrm{id}_{(0,\infty)})$-dependent and  identically distributed random variables according to $p\in \mathcal{M}_{q,d}^{\tr}$.
For $1\leq i,j\leq d$ and  for $\varepsilon>0$,
\begin{align*}
&\mathbb{P}\left(\left |
\frac1k{F}_{i}^{+k}\circ (X_1,\ldots,X_k)-\mathbb{I}_{F_{i}}(\iota_{1}(p))
\right| >\varepsilon \right)\\
\leq& 
\frac{1}{k^3\varepsilon^4}
\left(\mathbb{I}_{F_i^4}(\iota_p)^4+6\mathbb{I}_{F_{ii}}(\iota_p)\mathbb{I}_{F_i}(\iota_p)^2+\mathbb{I}_{F_i}(\iota_p)^4\right)
+\frac{3(k-1)}{k^3\varepsilon^4}(\mathbb{I}_{F_{ii}}(\iota_1(p))+\mathbb{I}_{F_i}(\iota_p)^2)^2
,\\
&\mathbb{P}\left( 
\left|
\frac1k F_{ij}^{+k}\circ (X_1,\ldots,X_k)- \mathbb{I}_{F_{ij}}(\iota_{1}(p))
\right| >\varepsilon \right)\\
\leq &\frac{1}{k\varepsilon^2}\left(\mathbb{I}_{F_{ij}^2}(\iota_{1}(p))
-\mathbb{I}_{F_{ij}}(\iota_{1}(p))^2\right)
+\frac{k-1}{k\varepsilon^2}\left(\mathbb{I}_{(F_{ij}\circ \pi_1^2)(F_{ij}\circ \pi_2^2) }(\iota_{2}(p))
-
\mathbb{I}_{F_{ij}}(\iota_{1}(p))^2\right).
\end{align*}
\end{enumerate}
\end{theorem}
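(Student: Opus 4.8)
Part~\eqref{791} is a first-order optimality computation followed by a Pythagorean argument, and part~\eqref{792} is a moment inequality driven by a reflection symmetry of $\rho_{q,k}^{v,S}$. For part~\eqref{791} I would start from the $\exp_{q_k}$-coordinate representation of $\mathcal{M}_{q_k,dk}$ supplied by~\eqref{qd} with parameters $(q_k,dk)$; denote by $\widehat T$ its sufficient statistic, noting that its $c$-term vanishes and that, after reindexing the coordinates of $\mathbb{R}^{dk}$ as pairs $(m,i)$, the single-block entries $F_i\circ\pi^k_m$ and $F_{ij}\circ\pi^k_m$ of $\widehat T$ are precisely the tuple $T$. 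By Remark~\ref{4th}\eqref{fourth}, $\iota_k(\mathcal{M}_{q,d}^{\tr})$ is a statistical model inside $\mathcal{M}_{q_k,dk}$ and $\mathbb{I}_{\chi_{q_k}}\in\mathbb{R}^{\mathcal{M}_{q_k,dk}}$. Next I would observe that $\mathbb{I}_{\chi_{q_k}}$ is \emph{constant} on $\iota_k(\mathcal{M}_{q,d}^{\tr})$: since $\iota_k(p_q^{v,S})=\rho_{q,k}^{v,S}$, the quantity $\mathbb{I}_{\chi_{q_k}}(\rho_{q,k}^{v,S})=\int(\rho_{q,k}^{v,S})^{q_k}\,d\mathcal{L}^{dk}$ is translation invariant in $v$, unchanged under $S\mapsto sS$, and depends on $S$ only through $\det(\beta_k(S)S)=\det(\beta_k(I_d)I_d)$. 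Now $e_{\bm x_\ast}$ is smooth on $\mathcal{M}_{q_k,dk}$ (Lemma~\ref{lem}\eqref{461}), and since $\exp_{q_k}$ is strictly increasing, the first-order condition at the interior maximizer $\iota_k(p_\ast)$ for $e_{\bm x_\ast}$ (equivalently, for $\langle\,\cdot\,,\widehat T(\bm x_\ast)\rangle$ minus the normalization, an increasing function of $e_{\bm x_\ast}$) combined with Lemma~\ref{lem}\eqref{463} applied with $(h,\tau)=(h_{q_k},\mathrm{id}_{(0,\infty)})$ yields $\widehat T_a(\bm x_\ast)=\mathbb{I}_{\widehat T_a\chi_{q_k}}(\iota_k(p_\ast))/\mathbb{I}_{\chi_{q_k}}(\iota_k(p_\ast))$ for every coordinate index $a$; restricting to the single-block indices and inserting $\mathbb{I}_{\chi_{q_k}}(\iota_k(p_\ast))=\mathbb{I}_{\chi_{q_k}}(\rho_{q,k}^{0,I_d})$ gives~\eqref{thmeq1}.

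For the uniqueness of the minimizer I would use the Pythagorean relation. Since $\ell_{f_{q_k},\chi_{q_k}}=\ln_{q_k}=\ell_{h_{q_k},\mathrm{id}_{(0,\infty)}}$, we have $\exp_{f_{q_k},\chi_{q_k}}=\exp_{q_k}$, and — because the covariance of $\rho_{q,k}^{v,S}$ is block-diagonal, so that the cross-block coordinates of $\widehat T$ are switched off on $\iota_k(\mathcal{M}_{q,d}^{\tr})$ — this sub-family carries an $\exp_{f_{q_k},\chi_{q_k}}$-representation with vanishing $c$-term whose sufficient statistic is built from $T$. I would then apply Proposition~\ref{maxcor} with this representation, with the external density $\rho$ in the role of the proposition's first argument and $\iota_k(p_\ast)$ in the role of ``$p_\ast$'': its integrability hypotheses follow from~\eqref{rhoo} and Remark~\ref{4th}\eqref{fourth}, and its moment conditions~\eqref{constr} reduce to $\mathbb{I}_{\chi_{q_k}}(\rho)=\mathbb{I}_{\chi_{q_k}}(\iota_k(p_\ast))$ and $\mathbb{I}_{T\chi_{q_k}}(\rho)=\mathbb{I}_{T\chi_{q_k}}(\iota_k(p_\ast))$, which hold by~\eqref{rhoo}, \eqref{thmeq1} and the constancy of $\mathbb{I}_{\chi_{q_k}}$. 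Proposition~\ref{maxcor} then gives
\[
D_{f_{q_k},\chi_{q_k}}(\rho,p')=D_{f_{q_k},\chi_{q_k}}(\rho,\iota_k(p_\ast))+D_{f_{q_k},\chi_{q_k}}(\iota_k(p_\ast),p'),
\]
and since $d_{f_{q_k},\chi_{q_k}}\ge 0$ with equality only on the diagonal, the last term is nonnegative and vanishes exactly for $p'=\iota_k(p_\ast)$, which is the asserted unique minimality.

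For part~\eqref{792}, fix $p=p_q^{v,S}\in\mathcal{M}_{q,d}^{\tr}$, so that the joint law of $(X_1,\dots,X_k)$ is $\rho_{q,k}^{v,S}\mathcal{L}^{dk}$. The marginal condition~\eqref{marginal} together with the invariance of $\rho_{q,k}^{v,S}$ under permutations of its $k$ blocks shows that the law of any $l$ of the $X_m$'s is $\iota_l(p)\mathcal{L}^{dl}$; hence $\mathbb{E}[F_i(X_m)]=\mathbb{I}_{F_i}(\iota_1(p))$, $\mathbb{E}[F_{ij}(X_m)]=\mathbb{I}_{F_{ij}}(\iota_1(p))$, $\mathbb{E}[F_{ij}(X_m)^2]=\mathbb{I}_{F_{ij}^2}(\iota_1(p))$ and $\mathbb{E}[F_{ij}(X_1)F_{ij}(X_2)]=\mathbb{I}_{(F_{ij}\circ\pi_1^2)(F_{ij}\circ\pi_2^2)}(\iota_2(p))$. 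The second bound is then exactly Chebyshev's inequality for $\tfrac1k F_{ij}^{+k}\circ(X_1,\dots,X_k)$, whose variance by exchangeability equals $\tfrac1k(\mathbb{I}_{F_{ij}^2}(\iota_1(p))-\mathbb{I}_{F_{ij}}(\iota_1(p))^2)+\tfrac{k-1}{k}(\mathbb{I}_{(F_{ij}\circ\pi_1^2)(F_{ij}\circ\pi_2^2)}(\iota_2(p))-\mathbb{I}_{F_{ij}}(\iota_1(p))^2)$. For $F_i$ the essential input is that $\rho_{q,k}^{v,S}$ is invariant under reflecting one block through $v$, i.e.\ $\pi^k_m(\bm x)\mapsto 2v-\pi^k_m(\bm x)$; hence every joint moment of the centred variables $Z_m:=F_i(X_m)-\mathbb{I}_{F_i}(\iota_1(p))$ in which some block index appears to an odd total degree vanishes, so $\mathbb{E}[(\tfrac1k\sum_m Z_m)^4]=\tfrac1{k^4}(k\,\mathbb{E}[Z_1^4]+3k(k-1)\,\mathbb{E}[Z_1^2Z_2^2])$. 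A fourth-moment Markov inequality, together with routine bounds on $\mathbb{E}[Z_1^4]$ and $\mathbb{E}[Z_1^2Z_2^2]$ in terms of the stated $\iota_1(p)$-moments (Cauchy--Schwarz, exchangeability and $\mathbb{I}_{F_i}(\iota_1(p))^2\le\mathbb{I}_{F_{ii}}(\iota_1(p))$), produces the first bound; this quartic estimate is preferred to the quadratic one because it is summable in $k$, which is what underlies the almost-sure law of Theorem~\ref{mainthm3}, whereas for $F_{ij}$ only the non-summable second-moment bound is available.

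The step I expect to be the main obstacle is the uniqueness assertion in part~\eqref{791}: one has to match escort moments of $\rho$ only along the within-block statistic $T$ while controlling the minimization, which forces one to identify exactly which coordinates of $\mathcal{M}_{q_k,dk}$ are activated at $\iota_k(p_\ast)$ — the cross-block quadratic ones being switched off by block-diagonality — before Proposition~\ref{maxcor} can be invoked on the appropriate $\exp_{f_{q_k},\chi_{q_k}}$-representation. Part~\eqref{792} is conceptually routine once the reflection symmetry is exploited — without it the mixed fourth-order terms would survive and $\tfrac1k F_i^{+k}$ would carry a non-vanishing limiting variance — but it demands a somewhat lengthy moment bookkeeping.
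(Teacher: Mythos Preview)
Your overall plan matches the paper's proof: for \eqref{791} you use the first-order condition at the maximiser together with Lemma~\ref{lem}\eqref{463} and the constancy of $\mathbb{I}_{\chi_{q_k}}$ on $\iota_k(\mathcal{M}_{q,d}^{\tr})$ to obtain~\eqref{thmeq1}, and then invoke Proposition~\ref{maxcor}; for \eqref{792} you use the reflection symmetry of $\rho_{q,k}^{v,S}$ in each block to kill the odd mixed moments, then apply a fourth-moment Markov inequality for $F_i$ and a second-moment Chebyshev inequality for $F_{ij}$. This is exactly the skeleton of the paper's argument, and for part~\eqref{792} there is nothing to add.

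The one place where you diverge from the paper, and where there is a genuine gap, is the minimiser assertion in part~\eqref{791}. You propose to apply Proposition~\ref{maxcor} to the \emph{sub-family} $\iota_k(\mathcal{M}_{q,d}^{\tr})$, equipped with an $\exp_{f_{q_k},\chi_{q_k}}$-representation built from the within-block statistic $T$. But Proposition~\ref{maxcor} only yields the Pythagorean relation $D_{f_{q_k},\chi_{q_k}}(\rho,p')=D_{f_{q_k},\chi_{q_k}}(\rho,\iota_k(p_\ast))+D_{f_{q_k},\chi_{q_k}}(\iota_k(p_\ast),p')$ for $p'$ in the set carrying that representation; applied to the sub-family it gives unique minimality only over $\iota_k(\mathcal{M}_{q,d}^{\tr})$, not over the full $\mathcal{M}_{q_k,dk}$ as the theorem asserts. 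The paper avoids this by working on $\mathcal{M}_{q_k,dk}$ itself: it takes the $\exp_{q_k}$-coordinate representation $(\mathcal{M}_{q_k,dk},\theta,T,0,\psi)$ from~\eqref{qd} (so $c\equiv 0$), observes via $\exp_{f_{q_k},\chi_{q_k}}=\exp_{q_k}$ that this is equally an $\exp_{f_{q_k},\chi_{q_k}}$-representation, uses Corollary~\ref{psipos} to see that $\psi\circ\theta^{-1}$ is strictly convex (hence the first-order condition characterises the maximiser), and then applies Proposition~\ref{maxcor} directly on $\mathcal{M}_{q_k,dk}$ to obtain the Pythagorean identity for all $\rho'\in\mathcal{M}_{q_k,dk}$. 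Your ``cross-block coordinates switched off'' observation is correct about $\iota_k(p_\ast)$ but is not needed for the paper's route, and restricting the representation as you do actually weakens the conclusion you can draw.
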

\begin{proof}
Let $(\mathcal{M}_{q_k,dk}, \theta, T, 0, \psi)$ be an $\exp_{q_k}$-coordinate representation on $\mathcal{M}_{q_k,dk}$ as in~\eqref{qd}.
Then $(\mathcal{M}_{q_k,dk}, \theta, T, 0, \psi)$ is also an  $\exp_{f_{q_k},\chi_{q_k}}$-coordinate representation by the property $\exp_q=\exp_{f_q,\chi_{q}}$.
Since $\det (\beta_{k}(S)S)$ does not depend on $S\in \mathrm{Sym}_+(d,\mathbb{R})$, 
we have 
\[
\mathbb{I}_{\chi_{q_k}}(\iota_{k}(p_q^{v,S}))= 
\mathbb{I}_{\chi_{q_k}}(\rho_{q,k}^{0,I_d}).
\]

Let us  prove Claim~\eqref{791}.
Fix ${\bm x_\ast}\in (\mathbb{R}^d)^k$ and  $p_\ast\in \mathcal{M}_{q,d}^{\tr }$. 
Assume that  $ \iota_{k}(p_\ast)$ maximizes the function  $e_{\bm{x}_\ast} $ on $\mathcal{M}_{q_k,dk}$.
Since $\ln_{q_k}$ is an increasing function, $ \theta(\iota_{k}(p_\ast))$ also maximizes the function 
\[
\ln_{q_k} \circ e_{\bm{x}_\ast} \circ \theta^{-1}
=\langle \cdot, T(\bm{x}_\ast)\rangle-\psi\circ \theta
\quad\text{on }\theta(\mathcal{M}_{q_k,dk}).
\]
By Corollary~\ref{psipos},
$\psi\circ \theta^{-1}$ is strictly convex on $\theta(\mathcal{M}_{q_k,dk})$ 
hence $ \ln_{q_k} \circ e_{\bm{x}_\ast} \circ \theta^{-1}$ is strictly concave   on $\theta(\mathcal{M}_{q_k,dk})$.
Then 
$ \iota_{k}(p_\ast)$  maximizes  $\ln_{q_k} \circ e_{\bm{x}_\ast}$ on an open set $\theta(\mathcal{M}_{q_k,dk})$ 
if and only if 
\[
\frac{\partial}{\partial \theta^i} (\ln_{q_k} \circ e_{\bm{x}_\ast})\big|_{ \iota_{k}(p_\ast)}=0
\quad \text{for }1\leq i\leq N_k,
\]
which is equivalent to 
\[
T_i(\bm{x}_\ast)=\frac{\partial}{\partial \theta^i} \psi\big|_{ \iota_{k}(p_\ast)}
\quad \text{for }1\leq i\leq N_k.
\]
This with Lemma~\ref{lem}\eqref{463} yields 
\[
\mathbb{I}_{T_i\chi_{q_k}}(\iota_{k}(p_\ast))= 
\frac{\partial}{\partial \theta^i} \psi\big|_{ \iota_{k}(p_\ast)}
\cdot \mathbb{I}_{\chi_{q_k}}(\iota_{k}(p_\ast))
=
T_i(\bm{x}_\ast)\cdot\mathbb{I}_{\chi_{q_k}}(\rho_{q,k}^{0,I_d}).
\]
Thus \eqref{thmeq1} holds.
For  $\rho\in \mathcal{P}(\mathcal{L}^{dk})$ satisfying~\eqref{rhoo},
we apply Proposition~\ref{maxcor} to have
\[
D_{f_{q_k},\chi_{q_k}}(\rho, \rho')
=
D_{f_{q_k},\chi_{q_k}}(\rho, \iota_{k} (p_\ast))
+D_{f_{q_k},\chi_{q_k}}(\iota_{k} (p_\ast), \rho')
\quad \text{for }\rho' \in \mathcal{M}_{q_k,dk},
\]
which ensures that $\iota_{k}(p_\ast)$ is a unique minimizer of $D_{f_{q_k},\chi_{q_k}}(\rho,\cdot)$ on $\mathcal{M}_{q_k,dk}$.
These completes the proof of Claim~\eqref{791}.

Let us prove Claim~\eqref{792}.
For $k\in \mathbb{N}$,
set 
\begin{align}\label{rv}
Y_k\coloneqq F_i(X_k)-\mathbb{I}_{F_i}(\iota_1(p)),\quad
Z_k\coloneqq F_{ij}(X_k)-\mathbb{I}_{F_{ij}}(\iota_1(p)),
\qquad \text{for }1\leq i,j\leq d.
\end{align}
For $1\leq m\leq k,$ 
we see that 
\begin{align*}
 \mathbb{I}_{F_i \circ \pi^k_m}(\iota_k(p))
 &=\int_{(\mathbb{R}^d)^k}  F_i(\pi^k_m({\bm x}))  \iota_k(p)({\bm x})d {\bm x}
 =\int_{\mathbb{R}^d}  F_i(x)  \iota_1(p)(x)dx
 =  \mathbb{I}_{F_i}(\iota_1(p)),
 \end{align*}
consequently
\[
\int_{(\mathbb{R}^d)^k}  \left(F_i( \pi^k_m({\bm x}))-\mathbb{I} _{F_i}(\iota_1(p))\right) \iota_k(p)({\bm x})d {\bm x}=0.
\]
Moreover, the change of variables implies
\[
\int_{(\mathbb{R}^d)^k}  
\left(F_i( \pi^k_m({\bm x}))-\mathbb{I} _{F_i}(\iota_1(p))\right)
\left(F_i( \pi^k_{m'}({\bm x}))-\mathbb{I} _{F_i}(\iota_1(p))\right)
 \iota_k(p)({\bm x})d {\bm x}=0
 \quad\text{for }1\leq m'\leq k.
\]
Since $\iota_k(p)$ has a finite fourth moment and $F_{i}^2=F_{ii}$, 
we have 
\begin{align*}
\mathbb{E}(Y_1^4)
&= \int_{\mathbb{R}^d} 
\left(F_i(x)-\mathbb{I}_{F_i}(\iota_p)\right)^4 \iota_1(p)(x)dx\\
&=\mathbb{I}_{F_i^4}(\iota_p)^4+6\mathbb{I}_{F_{ii}}(\iota_p)\mathbb{I}_{F_i}(\iota_p)^2+\mathbb{I}_{F_i}(\iota_p)^4,\\
\mathbb{E}(Y_1^2Y_2^2)
&= \int_{(\mathbb{R}^d)^2} 
\left(F_i(x)-\mathbb{I}_{F_i}(\iota_p)\right)^2
\left(F_i(y)-\mathbb{I}_{F_i}(\iota_p)\right)^2
 \iota_2(p)(x,y)dxdy\\
 &=(\mathbb{I}_{F_{ii}}(\iota_1(p))+\mathbb{I}_{F_i}(\iota_p)^2)^2,
\end{align*}
and similarly
\begin{align*}
\mathbb{E}(Y_1Y_2Y_3Y_4), \mathbb{E}(Y_1^2Y_2Y_3), \mathbb{E}(Y_1^3Y_2)=0.
\end{align*}
Then, for $k\geq 4$,  we derive from Chebyshev's  inequality that 
\begin{align*}
\mathbb{P}\left(\left|\frac1k\sum_{m=1}^k Y_m\right|>\varepsilon  \right)
&\leq 
\frac{1}{k^4\varepsilon^4}
\mathbb{E}\left(\left(\sum_{m=1}^k Y_m\right)^4\right)\\
&=
\frac{1}{k^4\varepsilon^4}
\left(
\sum_{m=1}^k \mathbb{E}\left(Y_m^4\right)
+
3\sum_{m,m'=1, m\neq m'}^k \mathbb{E}\left(Y_m^2 Y_{m'}^2\right)
\right)\\
&=\frac{1}{k^3\varepsilon^4}\mathbb{E}\left( Y_1^4\right)
+
\frac{3(k-1)}{k^3\varepsilon^4}\mathbb{E}\left(  Y_1^{2}Y_{2}^2\right),
\end{align*}
proving the first inequality in Claim~\eqref{792}.
Again,  we apply Chebyshev's inequality to have
\begin{align*}
\mathbb{P}\left( \left|\frac1k\sum_{m=1}^kZ_m\right| >\varepsilon \right)
&\leq \frac{1}{k^2\varepsilon^2} \mathbb{E}\left( \left(\sum_{m=1}^kZ_m \right)^2\right)\\
&=\frac{1}{k^2\varepsilon^2} 
\left(\sum_{m=1}^k
\mathbb{E}\left(Z_m ^2\right)
+
\sum_{m,m'=1,m\neq m'}^k
\mathbb{E}\left(Z_m Z_{m'}\right)
\right)\\
&=\frac{1}{k\varepsilon^2}
\mathbb{E}(Z_1^2)
+\frac{k-1}{k\varepsilon^2}
\mathbb{E}(Z_1Z_2).
\end{align*}
A direct calculation provides 
\begin{align*}
\mathbb{E}(Z_1^2)
&=\int_{\mathbb{R}^d} \left( F_{ij}(x)-\mathbb{I}_{F_{ij}} (\iota_1(p))\right)^2 
\iota_{1}(p)(x)dx 
=
\mathbb{I}_{F_{ij}^2}(\iota_{1}(p))
-\mathbb{I}_{F_{ij}}(\iota_{1}(p))^2,\\
\mathbb{E}(Z_1Z_2)
&=\int_{(\mathbb{R}^d)^2} 
\left( F_{ij}(x)-\mathbb{I}_{F_{ij}} (\iota_1(p))\right)
\left( F_{ij}(y)-\mathbb{I}_{F_{ij}} (\iota_1(p))\right)
\iota_{2}(p)(x)dx dy\\
&=
\mathbb{I}_{(F_{ij}\circ \pi_1^2)(F_{ij}\circ \pi_2^2) }(\iota_{2}(p))
-
\mathbb{I}_{F_{ij}}(\iota_{1}(p))^2,
\end{align*}
implying  the second  inequality in Claim~\eqref{792}

Thus the proof is achieved.
\end{proof}
\begin{proof}[Proof of Theorem~\ref{mainthm3}]
We deduce from  $\mathcal{M}=\mathcal{M}_{q,d}^{I_d}$ that  $\mathcal{M}$ is an identically  repeatable $\exp_q$-family.
Moreover, we can assume $T=(F_i)_{i=1}^d$
since a global $\exp_q$-coordinate representation is uniquely determined up to affine transformation 
by Proposition~\ref{minimal}.
For $\varepsilon>0$, 
it follows from  Theorem~\ref{mainthm33}~\eqref{792} that 
\[
\sum_{k=1}^\infty\mathbb{P}\left( \left|\frac1k F_i^{+k}(X_m)-\mathbb{I}_{F_i}(\iota_1(p))\right|>\varepsilon\right)<\infty.
\]
Then, we apply the Borel--Cantelli lemma to have 
\[
\frac{1}{k}Y_{k}
\xrightarrow{l\to\infty}0
\quad \text{$\mathbb{P}$-a.s.},
\]
which completes the proof.
\end{proof}

\begin{remark}
\begin{enumerate}
\setlength{\leftmargin}{23pt}
\setlength{\leftskip}{-15pt}
\item
We can show  \eqref{lln} for $p\in \mathcal{M}_{q,d}^{\tr}$ and $T=F_i$
using a similar argument as in the proof of Theorem~\ref{mainthm3}.
However, it is unclear whether \eqref{lln} holds for $p\in\mathcal{M}_{q,d}^{\text{tr}}$ and $T=F_{ij}$ or not.
\item
The estimate in Theorem~\ref{mainthm33}~\eqref{792} can be improved.
Indeed, a kind of the weak law of large numbers similar to Theorem~\ref{mainthm33}~\eqref{792}  for $F_1$ with $d=1$ is discussed in ~\cite{EKM}*{Theorem~1.5},
where  a $k^{-1/2}$-Lipschitz function $\Phi\colon\mathbb{R}^{k}\to \mathbb{R}$ is treated and  the  error is estimated above by $C_q (k\varepsilon^2)^{-\delta_q}$ 
with some constant $C_q$ depending on~$q$ and 
\[
\delta_q=\frac{3}{2}+\frac{1}{q-1}>2.
\]
The choice $\Phi=F_i^{+k}$ is possible 
and the estimate is  better (resp.\,worse) than our estimate $C_q (k^2\varepsilon^4)^{-1}$ 
with respect to $k$ (resp.\,$\varepsilon$).
Moreover, in \cite{EKM}*{Proposition~7.5},
a different type of error estimate for $F_{ij}$ with $d=1$ is given.
\setlength{\leftskip}{-15pt}
\item
If we do not pay attention to the marginal condition~\eqref{marginal},
it is possible to define a repetition $(\iota_k)_{k=1}^\infty$ with   
$\alpha_k=\mathrm{id}_I$ and  $b_k= \mathbf{1}_{\mathcal{M}}$ as well as the case of an exponential family.
In such a case, the $(h,\mathrm{id}_I)$-dependence is called  the \emph{$h^\ast$-independence} 
(see for example \cite{FM2012}*{Definition 4}).
 \setlength{\leftskip}{-15pt}
\end{enumerate}
\end{remark}
\begin{ack}
The authors would like to thank Syota Esaki, Daisuke Kazukawa and Eren Mehmet K{\i}ral for fruitful discussions.
The first author was supported in part by JSPS KAKENHI Grant Number 23K03088.
The second author was supported in part by JSPS KAKENHI Grant Numbers 
24H00183, 24K21513.
\end{ack}

\begin{bibdiv}
 \begin{biblist}
 \bib{Amari}{book}{
 author={Amari, Shun-ichi},
 title={Information geometry and its applications},
 series={Applied Mathematical Sciences},
 volume={194},
 publisher={Springer},
 date={2016},
 pages={xiii+374},
}
\bib{AA}{article}{
 author={Ay, Nihat},
 author={Amari, Shun-ichi},
 title={A Novel Approach to Canonical Divergences within Information Geometry},
 journal={Entropy},
 volume={17},
 date={2015},
 pages={8111--8129},
 }
\bib{BN1970}{book}{
 author={Barndorff-Nielsen, Ole},
 title={Exponential families: Exact theory},
 note={Various Publication Series, No. 19},
 publisher={Matematisk Institut, Aarhus Universitet, Aarhus},
 date={1970},
 pages={iii+149+iii pp. (not consecutively paged) (loose errata)},
}
\bib{BJ}{article}{
 author={Barndorff-Nielsen, Ole},
 author={Jupp, Peter},
 title={Yokes and symplectic structures},
 journal={J. Statist. Plann. Inference},
 volume={63},
 date={1997},
 number={2},
 pages={133--146},
}
\bib{Bashkirov}{article}{
 author={Bashkirov, A. G.},
 title={On maximum entropy principle, superstatistics, power-law
 distribution and Renyi parameter},
 note={News and expectations in thermostatistics},
 journal={Phys. A},
 volume={340},
 date={2004},
 number={1-3},
 pages={153--162},
 issn={0378-4371},
}
\bib{CNN}{book}{
 author={\v{C}encov, N. N.},
 title={Statistical decision rules and optimal inference},
 series={Translations of Mathematical Monographs},
 volume={53},
 publisher={American Mathematical Society, Providence, RI},
 date={1982},
 pages={viii+499},
}
\bib{Eguchi92}{article}{
 author={Eguchi, Shinto},
 title={Geometry of minimum contrast},
 journal={Hiroshima Math. J.},
 volume={22},
 date={1992},
 number={3},
 pages={631--647},
}
\bib{EKM}{article}{
   author={Esaki, Syota},
   author={Kazukawa, Daisuke},
   author={Mitsuishi, Ayato},
   title={Convergence of Cones of Metric Measure Spaces and Its Application
   to Cauchy Distribution},
   journal={Int. Math. Res. Not. IMRN},
   date={2025},
   number={18},
   pages={rnaf292},
}
\bib{FM2012}{article}{
 author={Fujimoto, Yu},
 author={Murata, Noboru},
 title={A generalisation of independence in statistical 
models for categorical distribution},
 journal={Int. J. Data Mining, Modelling and Management},
 volume={4},
 date={2012},
 number={2},
 pages={172--187},
}
\bib{Kurose}{article}{
 author={Kurose, Takashi},
 title={On the divergences of $1$-conformally flat statistical manifolds},
 journal={Tohoku Math. J. (2)},
 volume={46},
 date={1994},
 number={3},
 pages={427--433},
}
\bib{MTKE}{article}{
 author={Murata, Noboru },
 author={Takenouchi, Takashi},
 author={Kanamori,Takafumi}, 
 author={{Eguchi}, Shinto},
 title={Information Geometry of U-Boost and Bregman Divergence}, 
 journal={Neural Computation}, 
 year={2004},
 volume={16},
 number={7},
 pages={1437-1481},
}
\bib{NZ}{article}{
 author={Naudts, Jan},
 author={Zhang, Jun},
 title={Rho-tau embedding and gauge freedom in information geometry},
 journal={Inf. Geom.},
 volume={1},
 date={2018},
 number={1},
 pages={79--115},
 issn={2511-2481},
}
\bib{Rock}{book}{
 author={Rockafellar, R. Tyrrell},
 title={Convex analysis},
 series={Princeton Mathematical Series, No. 28},
 publisher={Princeton University Press, Princeton, N.J.},
 date={1970},
 pages={xviii+451},
}
\bib{Shima}{book}{
 author={Shima, Hirohiko},
 title={The geometry of Hessian structures},
 publisher={World Scientific Publishing Co. Pte. Ltd., Hackensack, NJ},
 date={2007},
 pages={xiv+246},
 isbn={978-981-270-031-5},
 isbn={981-270-031-5},
 doi={10.1142/9789812707536},
}
\bib{Vi}{book}{
   author={Villani, C\'edric},
   title={Topics in optimal transportation},
   series={Graduate Studies in Mathematics},
   volume={58},
   publisher={American Mathematical Society, Providence, RI},
   date={2003},
   pages={xvi+370},
  }
\bib{Zhang}{article}{
 author={Zhang, Jun},
 title={Divergence Function, Duality, and Convex Analysis},
 journal={Neural Computation},
 volume={16},
 number={1},
 date={2004},
 pages={159--195}
}
 \end{biblist}
\end{bibdiv}
\end{document}